\newtheorem{main}{Theorem}
\newtheorem{mcor}[main]{Corollary}
\newtheorem{theorem}{Theorem}[section]
\newtheorem{thm}[theorem]{Theorem}
\newtheorem{prop}[theorem]{Proposition}
\newtheorem{cor}[theorem]{Corollary}
\theoremstyle{definition}
\newtheorem{definition}[theorem]{Definition}
\newtheorem{notation}[theorem]{Notation}
\newtheorem{example}[theorem]{Example}
\newtheorem{examples}[theorem]{Examples}
\newtheorem{remark}[theorem]{Remark}
\newtheorem{question}[theorem]{Question}
\def\ca{\curvearrowright}
\def\G{\Gamma}
\def\g{\gamma}
\def\ra{\rightarrow}
\def\la{\lambda}
\def\La{\Lambda}
\def\bb{\mathbb}
\def\ve{\varepsilon}
\def\vp{\varphi}
\def\de{\delta}
\def\Cal{\mathcal}
\def\OO{\mathcal O}
\def\HH{\mathcal H}
\DeclareMathOperator*{\Lim}{{Lim}}
\numberwithin{equation}{section}
\newcommand{\ip}[2]{\langle #1, #2 \rangle}
\newcommand{\abs}[1]{\lvert#1\rvert}
\providecommand{\nor}[1]{\lVert #1 \rVert}
\begin{document}

\title[Inner amenability and central sequences]{Inner amenability for groups and central sequences in factors}

\author[Chifan]{Ionut Chifan}
\address{Department of Mathematics, University of Iowa, 14 MacLean Hall, IA 52242, USA and IMAR, Bucharest, Romania} \email{ionut-chifan@uiowa.edu}
\thanks{I.C.\ was supported in part by NSF Grant \#1263982.}
\author[Sinclair]{Thomas Sinclair}
\address{Department of Mathematics, University of California, Los Angeles, Box 951555, Los Angeles, CA 90095-1555} \email{thomas.sinclair@math.ucla.edu}
\thanks{T.S.\ was supported by an NSF RTG Assistant Adjunct Professorship.}
\author[Udrea]{Bogdan Udrea}
\address{Department of Mathematics, University of Illinois, Urbana Champaign, IL, USA and IMAR, Bucharest, Romania} \email{budrea@illinois.edu}
\thanks{}

\subjclass[2010]{22D40; 20F65}

\date{\today}

\dedicatory{}

\keywords{quasi-cocycles, inner amenability, property \emph{Gamma} of Murray and von Neumann}

\begin{abstract} We show that a large class of i.c.c., countable, discrete groups satisfying a weak negative curvature condition are not inner amenable. By recent work of Hull and Osin, our result recovers that mapping class 
groups and ${\rm Out}(\bb F_n)$ are not inner amenable. We also show that the group-measure space constructions associated to free, strongly ergodic p.m.p.\ actions of such groups do not have property Gamma of Murray and von 
Neumann.

\end{abstract}

\maketitle

\tableofcontents

\newpage

\section*{Introduction} 

The study of central sequences has occupied a prominent place in the classification of $\rm II_1$ factors. In their seminal investigations Murray and von Neumann \cite{Murray-vN-4} defined a $\rm II_1$ factor $M$ to have 
\emph{property Gamma} if there exists a net of unitaries $(u_n)$ in $M$ with $\tau(u_n)\equiv 0$ and such that $\nor{x u_n - u_n x}_2\to 0$, for all $x\in M$. In particular, they were able to demonstrate that the free group 
factor $L(\bb F_2)$ does not have property Gamma, providing the first demonstration of the existence of two non-isomorphic, non-hyperfinite $\rm II_1$ factors. The study of property Gamma played an important role in the 
celebrated classification results of McDuff \cite{McD} and Connes \cite{Connes-inj}.

 In the 1970s Effros \cite{Effros} introduced an analog of property Gamma for discrete groups, which he termed \emph{inner amenability}. A discrete group $\G$ is called inner amenable if there exists a net 
 $\xi_n\in \ell^2(\G\setminus\{e\})$ of unit-norm vectors such that $\nor{u_\g\xi_n - \xi_n u_\g}_2\to 0$, for all $\g\in \G$. A trivial consequence of Theorem 2.1 in \cite{Connes-inj} is that, for i.c.c.\ discrete groups, 
 inner-amenability is a weaker property than the group von Neumann algebra having property Gamma. However, examples of inner amenable groups whose von Neumann algebras do not possess property Gamma have only been recently 
 constructed by Vaes \cite{Vae}.

 For $\rm II_1$ factors without property Gamma, strong classification results have become achievable in a large part  through the development of Popa's deformation/rigidity theory \cite{P, PoICM, PoFree}. As the theory developed, it was readily 
 noticed that fairly mild ``deformability'' and ``rigidity'' assumptions could be used to demonstrate the absence of property Gamma, cf.\ \cite{CS, Hou, OzSolid, OPII, PetL2, PoFree}. In parallel, it was noticed that modest 
 ``negative curvature'' assumptions on a discrete group could be used to show non-inner amenability \cite{Harpe-hyp, DGO}. The goal of this paper is to fully develop the connections between these results through deriving non-inner 
 amenability of large classes of countable, discrete groups through operator algebraic methods, specifically the theory of ``weak'' deformations developed in \cite{CS, CSU, Sin}.
 
\subsection*{Statement of results} This paper is a continuation in the series of papers \cite{CS, CSU} exploring the consequences of negative-curvature phenomena in geometric group theory to the structure of group and group-measure space factors. The first in the series \cite{CS} dealt with structural results in the context of the strongest type of negative curvature condition, namely Gromov hyperbolicity. The essential result obtained therein was the extension of the strong solidity results of Ozawa and Popa \cite{OPI, OPII} and of the second author \cite{Sin} from lattices in rank one semisimple Lie groups to Gromov hyperbolic groups in general. In the second paper in the series \cite{CSU}, this result was further refined in two ways: first, to cover all weakly amenable groups satisfying a weaker negative curvature condition, relative hyperbolicity; and second, to cover products of such groups. The starting point of all of these results is the conversion of the negative curvature condition into a natural 
cohomological-type condition (relative $\Cal{QH}_{reg}$/bi-exactness) which is used to construct a weak deformation of the von Neumann algebra to which generalized ``spectral gap rigidity'' arguments of the type developed by Popa \cite{PoFree} and Ozawa and Popa \cite{OPI, OPII} are applied to obtain the desired classification results. The results of \cite{CS, CSU} were subsequently extended to general crossed product factors of hyperbolic groups in \cite{PoVa-hyp}.

Recent progress in geometric group theory has been on obtaining structural results for groups satisfying the much weaker negative curvature condition of ``acylindrical hyperbolicity,'' cf.\ \cite{Osin-ac}, or various 
equivalent geometric properties such as the condition of admitting a hyperbolic, WPD element, cf.\ \cite{BBF}; admitting a proper, infinite hyperbolically embedded subgroup \cite{DGO}; and weak acylindricity 
\cite{Ham-jems}, among others. In particular, it was shown by Dahmani, Guirardel, and Osin \cite{DGO}, that any group satisfying one of these conditions is not inner amenable. In parallel with these advances in 
geometric group theory, we introduce a cohomological-type version of weak negative curvature which we will use to classify the structure of central sequences for the group von Neumann algebra and the related 
group-measure space constructions.

\begin{notation} Let $\G$ be a countable discrete group, and let $\Cal G$ be a family of subgroups of $\G$. In order to simplify the statements of the results, throughout the paper we will use the notation set 
forth here.  We will say that $\G$ satisfies the condition $\mathsf{NC}$ relative to the family $\Cal G$ (abbreviated $\mathsf{NC}(\Cal G)$) if $\G$ satisfies one of the following statements:

\begin{itemize}
\item $\G$ admits an unbounded quasi-cocycle into a non-amenable orthogonal representation which is mixing with respect to $\Cal G$;
\item $\G$ admits a symmetric array into a non-amenable orthogonal representation so that the array is proper with respect to $\Cal G$.
\end{itemize}

\noindent The group $\G$ satisfies condition $\mathsf{NC}$ if it satisfies condition $\mathsf{NC}$ relative to the family consisting of the trivial subgroup.
\end{notation}

\noindent The condition $\mathsf{NC}$ is satisfied for all groups in the class $\Cal D_{reg}$ of Thom \cite{Tho} as well as the class $\Cal{QH}$ of the first two authors \cite{CS}. 
We refer the reader to section \ref{sec-background} below for relevant terminology and examples. 

In this paper we obtain a complete classification of the asymptotic central sequences of arbitrary crossed products factors $M=A\rtimes \G$ associated with groups $\G$ satisfying condition $\mathsf{NC}(\Cal G)$. In more colloquial
terms, we will be showing that all sequences which asymptotically commute with the entire factor $M$ must asymptotically ``live'' close to the (canonical) von Neumann subalgebras of $M$ arising from the subgroups of $\mathcal G$. Basic
examples can be constructed to show that this control is actually sharp. In particular, this result provides large natural classes of examples of i.c.c.\ groups whose factors do not possess property \emph{Gamma} of Murray and von Neumann. 
We now state the results.

\begin{main}\label{control-central-seq} Let $\Gamma$ be a countable discrete group together with a family of subgroups $\mathcal G$.  Let $\Gamma \curvearrowright A$ be any trace preserving action on an amenable, finite von Neumann 
algebra and denote by $M = A \rtimes \Gamma$. Also assume that $\omega$ is a free ultrafilter on the positive integers $\mathbb N$.  

If $\G$ satisfies condition $\mathsf{NC}(\Cal G)$, then for any sequence $(x_n)_n\in M' \cap M^{\omega}$ there exists a finite subset $\mathcal F\subseteq \mathcal G$ such that 
$(x_n)_n\in \vee_{\Sigma \in \mathcal F}(A \rtimes \Sigma)^{\omega} \vee M$.\end{main}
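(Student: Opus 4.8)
The plan is to run a spectral-gap/transversality argument in the style of Popa and Ozawa--Popa \cite{PoFree, OPI}, in the form adapted to central sequences and refined relative to the family $\mathcal G$ as in \cite{CSU}. From condition $\mathsf{NC}(\mathcal G)$ and the constructions of \cite{CS, Sin, CSU}, an unbounded quasi-cocycle (resp.\ a symmetric array) into the non-amenable orthogonal representation $\pi$ produces a ``weak'' deformation of $M$: a finite von Neumann algebra $\widetilde M\supseteq M$ with a normal faithful trace, a trace-preserving flow $(\alpha_t)_{t\in\mathbb R}$ on $\widetilde M$ that restricts to the identity on $A$ and converges pointwise in $\|\cdot\|_2$ to $\mathrm{id}_M$, together with a transversality inequality of Popa type, which we use in the form
\[
\|\alpha_{2t}(x) - x\|_2 \;\le\; 2\,\|\alpha_t(x) - E_M\alpha_t(x)\|_2, \qquad x\in M .
\]
The essential extra input, and the reason the statement allows an arbitrary \emph{amenable} $A$, is that non-amenability of $\pi$ forces the $M$--$M$ bimodule $\mathcal H := L^2\widetilde M\ominus L^2 M$ to be \emph{non-amenable relative to $\mathcal G$}: after discarding the part of $\mathcal H$ correlated with the subalgebras $A\rtimes\Sigma$ ($\Sigma\in\mathcal G$), the remainder is weakly contained in the coarse $M$-bimodule $L^2 M\,\bar\otimes\,L^2 M$. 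Moreover the mixing (resp.\ properness) of the data with respect to $\mathcal G$ identifies the ``relative'' part of $\mathcal H$ with a sum over $\Sigma\in\mathcal G$ of bimodules of the form $L^2\langle M, e_{A\rtimes\Sigma}\rangle$, and guarantees that along $(A\rtimes\Sigma)$-central vectors the maps $\alpha_t$ stay close to the identity.

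\textbf{From central sequences to almost-central vectors.} Fix $(x_n)_n\in M'\cap M^\omega$, normalized so $\|x_n\|_\infty\le 1$, and for $t>0$ set $y_n^{(t)} := \alpha_t(x_n)-E_M\alpha_t(x_n)\in\mathcal H$. For a unitary $u\in M$ one has
\[
\|[u,\alpha_t(x_n)]\|_2 \;=\; \|\alpha_t\big([\alpha_{-t}(u),x_n]\big)\|_2 \;\le\; 2\|\alpha_{-t}(u)-u\|_2 + \|[u,x_n]\|_2 ,
\]
whence $\limsup_n\|[u,y_n^{(t)}]\|_2\le 2\|\alpha_{-t}(u)-u\|_2$, which tends to $0$ as $t\to 0$, uniformly over any fixed finite generating set of unitaries of $M$. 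So, for each small $t$, $(y_n^{(t)})_n$ is a bounded, asymptotically $M$-central sequence of vectors in $\mathcal H$.

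\textbf{Locating the support and concluding.} By the relative non-amenability of $\mathcal H$, an asymptotically $M$-central bounded sequence of vectors cannot carry non-negligible mass in the coarse part; up to a defect controlled by $t$ it must lie in the sum of the ``$\Sigma$-relative'' pieces. A maximality/compactness argument -- using separability of $M$, the combinatorics of the double cosets of the $\Sigma$'s, and the fact that a bounded $M$-central sequence cannot distribute mass over infinitely many mutually transverse relative pieces -- then produces a \emph{single} finite $\mathcal F\subseteq\mathcal G$ such that for every $\varepsilon>0$ there is $t>0$ with $\limsup_n\|y_n^{(t)}-P_{\mathcal F}\,y_n^{(t)}\|_2<\varepsilon$, where $P_{\mathcal F}$ is the projection onto the closed $M$-subbimodule of $L^2\widetilde M$ generated by $\bigvee_{\Sigma\in\mathcal F}(A\rtimes\Sigma)$. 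Combining this with the transversality inequality $\|\alpha_{2t}(x_n)-x_n\|_2\le 2\|y_n^{(t)}\|_2$ and with the (near) $\alpha_t$-invariance along the $A\rtimes\Sigma$-pieces, and letting $\varepsilon\to 0$ with $\mathcal F$ fixed, one obtains that $(x_n)_n$ is $\|\cdot\|_2$-approximated by elements of $\bigvee_{\Sigma\in\mathcal F}(A\rtimes\Sigma)^\omega\vee M$, i.e.\ $(x_n)_n\in\bigvee_{\Sigma\in\mathcal F}(A\rtimes\Sigma)^\omega\vee M$, as desired.

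\textbf{Main obstacle.} The delicate step is the extraction of a \emph{single finite} $\mathcal F$ that works uniformly in $\varepsilon$ and $t$, rather than one growing with the scale; this is exactly where the ``mixing / proper with respect to $\mathcal G$'' hypothesis must be used quantitatively -- decomposing $y_n^{(t)}$ along double cosets of the various $\Sigma$, using non-amenability of $\pi$ to absorb the coarse remainder, and using centrality to rule out mass escaping to infinitely many $\Sigma$'s. Setting up the weak deformation on the ultrapower (so that it interacts correctly with $E_M$ and with the inclusions $(A\rtimes\Sigma)^\omega\subseteq M^\omega$, bearing in mind that $\alpha_t$ does \emph{not} converge to the identity on $M^\omega$) is a further technical point that must be handled with care.
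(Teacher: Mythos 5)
Your first step---building the Gaussian deformation from the array/quasi-cocycle and using non-amenability of $\pi$ (via spectral gap of the Koopman representation on $L^2(D\ominus\bb C1)$), together with transversality and almost-bimodularity, to get $\limsup_n\|x_n-V_t(x_n)\|_2\to 0$ as $t\to 0$---is exactly how the paper begins. But the second half of your argument has a genuine gap, in two places. The structural claim you lean on, namely that $L^2(\tilde M)\ominus L^2(M)$ splits as a sum of bimodules $L^2\langle M,e_{A\rtimes\Sigma}\rangle$ plus a remainder weakly contained in the coarse bimodule $L^2M\,\bar\otimes\,L^2M$, is not a consequence of the hypotheses. ``Mixing relative to $\Cal G$'' is a statement about decay of matrix coefficients of $\pi$ along sequences leaving the subgroups; it does not yield weak containment of $\pi$ (much less of the tensor powers $\pi^{\otimes k}$ that govern the $M$-bimodule structure of the Gaussian dilation) in $\bigoplus_\Sigma\ell^2(\G/\Sigma)\oplus\ell^2(\G)^{\oplus\infty}$. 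The paper uses non-amenability of $\pi$ only to obtain the spectral-gap inequality $\sum_{k\in K}\|u_k\xi-\xi u_k\|_2\geq L\|\xi\|_2$ on $L^2(\tilde M)\ominus L^2(M)$; it neither needs nor asserts any relative weak containment of that bimodule.

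More seriously, the step you yourself flag as delicate---extracting a \emph{single finite} $\Cal F\subseteq\Cal G$---is the actual content of the theorem, and the ``maximality/compactness argument'' is not supplied; nothing in your sketch uses the unboundedness of the quasi-cocycle or the geometry of its level sets. What does the work in the paper is a combinatorial freeness property of the balls $B_C=\{\g:\|q(\g)\|\leq C\}$ (Proposition \ref{mixingmalnormal} and Corollary \ref{quasi-control}): for $\g\notin B_{2C+2D}$ there are finite $F,K\subset\G$ and a finite $\Cal F\subset\Cal G$ with $\g(B_C\setminus F\Cal F K)\cap(B_C\setminus F\Cal F K)\g=\emptyset$. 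Uniform convergence of $V_t$ shows $x_n$ is $\ve$-supported on $\mathrm{span}\{au_g:g\in B_C\}$ for some $C$; if $(x_n)$ were orthogonal to $\vee_{\Sigma\in\Cal F}(A\rtimes\Sigma)^\omega\vee M$ one would have $\|P_{F\Cal F K}(x_n)\|_2\to0$, and then the near-commutation of $x_n$ with $u_\g$ together with the displayed disjointness forces $2(1-\ve)^2\leq 4\ve^2$, a contradiction. Some quantitative input of this kind is indispensable and is missing from your proposal. (Note also that the symmetric-proper-array case is handled in the paper by a separate argument, producing a $\pi_\g$-invariant state on $\bb B(L^2(X^\pi))$ from a hypothetical escaping central sequence, rather than by coset combinatorics.)
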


\begin{mcor} If $\G$ is an i.c.c., countable, discrete group which admits a non-degenerate, hyperbolically embedded subgroup, then $L\G$ does not have property Gamma. If $\G\ca (X,\mu)$ is any strongly ergodic p.m.p.\ action, then the group-measure space factor $L^\infty(X)\rtimes \G$ does not have property Gamma. In particular, this applies to non-virtually abelian mapping class groups $\Cal{MCG}(\Sigma)$ for $\Sigma$ a (punctured) closed, orientable surface as well as ${\rm Out}(\bb F_n)$, $n\geq 3$.
\end{mcor}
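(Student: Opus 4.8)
The plan is to derive the Corollary from Theorem~\ref{control-central-seq} applied with $\Cal G=\{\{e\}\}$, the family consisting only of the trivial subgroup, so that the conclusion of the theorem becomes simply $M'\cap M^{\om}\subseteq A^{\om}\vee M$. The only preliminary point is that a countable i.c.c.\ group $\G$ admitting a non-degenerate, hyperbolically embedded subgroup satisfies condition $\mathsf{NC}$ (equivalently $\mathsf{NC}(\{\{e\}\})$): by Osin's characterization such a $\G$ is acylindrically hyperbolic, hence non-amenable, and by the work of Hull and Osin on induced quasi-cocycles it admits an unbounded quasi-cocycle into a non-amenable, mixing orthogonal representation---one may take a multiple of the left regular representation $\ell^2(\G)$, which is a non-amenable representation because $\G$ is non-amenable and is mixing (with respect to the trivial subgroup) because $\G$ is infinite.

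For $L\G$ we take $A=\bb C$, so $M=L\G$. Theorem~\ref{control-central-seq} gives $M'\cap M^{\om}\subseteq\bb C^{\om}\vee M=M$, since the copy of $\bb C^{\om}$ inside $M^{\om}$ is just $\bb C1$. As $\G$ is i.c.c., $L\G$ is a $\rm II_1$ factor, and a relative commutant contained in $M$ is contained in $M'\cap M=Z(M)=\bb C1$; hence $L\G$ has no property Gamma.

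For the group-measure space factor we take $A=L^{\infty}(X)$, which is abelian, hence amenable; since the action $\G\ca(X,\mu)$ is strongly ergodic---in particular ergodic---and free, $M=A\rtimes\G$ is a $\rm II_1$ factor. Theorem~\ref{control-central-seq} gives $M'\cap M^{\om}\subseteq A^{\om}\vee M$, and since the canonical unitaries $u_{\g}$ normalize $A^{\om}$ in $M^{\om}$, this algebra equals the crossed product $A^{\om}\rtimes\G$. Let $(x_n)_n$ lie in the relative commutant and write its Fourier expansion $(x_n)_n=\sum_{\g}a_{\g}u_{\g}$ with $a_{\g}\in A^{\om}$. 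Commutation with the abelian algebra $A$ forces $a_{\g}(\al_{\g}(a)-a)=0$ for all $a\in A$; for $\g\neq e$ freeness lets one pick $a\in A$ with $|\al_{\g}(a)-a|$ bounded away from $0$ outside a set of arbitrarily small measure, which forces $a_{\g}=0$. Thus $(x_n)_n\in A^{\om}$, and since it also commutes with every $u_{\g}$ it is a $\G$-fixed point of $A^{\om}$, hence scalar by strong ergodicity. So $M'\cap M^{\om}=\bb C1$ and $M$ has no property Gamma. Finally, for $\Sg$ a (punctured) closed orientable surface the non-virtually abelian mapping class groups $\Cal{MCG}(\Sg)$ and ${\rm Out}(\bb F_n)$, $n\geq3$, are i.c.c.\ and, by Osin, admit non-degenerate, hyperbolically embedded subgroups, so the above applies.

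Since Theorem~\ref{control-central-seq} carries the analytic weight, the Corollary is essentially a formal consequence together with two external inputs---Hull--Osin's quasi-cocycles and Osin's characterization of acylindrical hyperbolicity---which supply condition $\mathsf{NC}$. The one place where a small argument is genuinely needed is the final reduction in the group-measure space case: one must check, inside the tracial ultrapower, that freeness annihilates each off-diagonal Fourier coefficient $a_{\g}$ (applying the above fact about free transformations to a representing sequence of $a_{\g}$) and that strong ergodicity trivializes the fixed-point algebra $(A^{\om})^{\G}$; this is standard but should be done carefully.
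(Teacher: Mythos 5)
Your proposal is correct and follows essentially the same route as the paper: condition $\mathsf{NC}$ (with $\Cal G=\{\{e\}\}$) is supplied by the Hull--Osin quasi-cocycle into a multiple of the left regular representation, which is non-amenable and mixing since such a $\G$ is non-amenable; Theorem~\ref{control-central-seq} with $A=\bb C$ then kills $M'\cap M^\omega$ for the factor $L\G$, and with $A=L^\infty(X)$ it reduces the relative commutant to $A^\omega\rtimes\G$, after which freeness and strong ergodicity finish exactly as in the paper's Corollary following Theorem~\ref{controlcentralseq}. The only caveat, inherited from the paper's own statement rather than introduced by you, is that the "in particular" examples must be taken where the groups are genuinely i.c.c.
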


In the group factor case, the result may be sharpened to further rule out inner amenability.  We mention in passing that while this result is group theoretical in nature, the proof is
 is operator algebraic, rooted in Popa's deformation/rigidity theory.

\begin{main}\label{noninner-amenable} Let $ \G$ be an i.c.c.\ group together with a family of subgroups $\mathcal G$. Assume that $\G$ is i.c.c.\ over every subgroup $\Sigma \in \mathcal G$ (cf.\ Definition \ref{def-icc}).  
If $\G$ satisfies condition $\mathsf{NC}(\Cal G)$, then $\G$ is non-inner amenable. \end{main}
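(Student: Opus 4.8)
The plan is to feed the hypothesis of inner amenability into the deformation/rigidity machinery underlying Theorem~\ref{control-central-seq} and then to contradict it using the relative i.c.c.\ hypothesis.

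\emph{Step 1: from inner amenability to an asymptotically central net of $L^2$-vectors.}
Suppose $\G$ is inner amenable, witnessed by unit vectors $\xi_i\in\ell^2(\G\setminus\{e\})$ with $\nor{u_\g\xi_i-\xi_i u_\g}_2\to 0$ for every $\g\in\G$. A Day--Namioka convexity argument lets us take each $\xi_i$ finitely supported; equivalently, replacing $\xi_i$ by the normalized indicator of a ``conjugation-F\o lner'' set $F_i\subseteq\G\setminus\{e\}$, one has $\abs{\g F_i\g^{-1}\triangle F_i}/\abs{F_i}\to 0$. Viewing $\ell^2(\G\setminus\{e\})\subseteq\ell^2\G=L^2(L\G)$, the net $(\xi_i)$ consists of unit vectors in $L^2(L\G)\ominus\bb C1$ that are \emph{asymptotically central}: $\nor{x\xi_i-\xi_i x}_2\to0$ for all $x\in L\G$. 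The goal is to run the argument of Theorem~\ref{control-central-seq} on this net and conclude that there is a finite $\Cal F\subseteq\Cal G$ for which $\xi_i$ lies asymptotically in $\overline{\mathrm{span}}\,\{u_g:g\in\bigcup_{\Sigma\in\Cal F}\Sigma\}$; that is, the probability measures $\mu_i:=\abs{\xi_i}^2$ on $\G\setminus\{e\}$ satisfy $\mu_i\big(\bigcup_{\Sigma\in\Cal F}\Sigma\big)\to1$.

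\emph{Step 2: the deformation/spectral-gap argument, and the main obstacle.}
Condition $\mathsf{NC}(\Cal G)$ provides, exactly as in the proof of Theorem~\ref{control-central-seq}, a weak (malleable-type) deformation $(\alpha_t)$ of $M=L\G$ on an enveloping finite von Neumann algebra $\tilde M\supseteq M$ with two key features: (i) the $L\G$-bimodule $L^2(\tilde M)\ominus L^2(M)$ is weakly contained in a ``$\Cal G$-relatively coarse'' bimodule; and (ii) since the underlying orthogonal representation is non-amenable and mixing relative to $\Cal G$, the deformation converges uniformly on the unit ball of $\overline{\mathrm{span}}\,\{u_g:g\in\bigcup_{\Sigma\in\Cal E}\Sigma\}$ for each finite $\Cal E\subseteq\Cal G$, while being genuinely contractive in the directions transverse to $\bigcup_\Sigma\Sigma$. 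For a \emph{bounded} central sequence $(x_n)\in M^\omega$ one now applies Popa's spectral gap principle: from $\nor{[\alpha_{-t}(u_\g)-u_\g,\,x_n]}_2\le 2\nor{\alpha_{-t}(u_\g)-u_\g}_2\nor{x_n}_\infty$ the deformation converges uniformly along $(x_n)$, the transversal part $\alpha_t(x_n)-E_{M^\omega}(\alpha_t(x_n))$ becomes an asymptotically central vector in the relatively coarse bimodule, and transversality forces it to be controlled by $\bigvee_{\Sigma\in\Cal F}(A\rtimes\Sigma)^\omega$, which upon undoing the deformation gives Theorem~\ref{control-central-seq}. \textbf{The main obstacle} for Theorem~\ref{noninner-amenable} is that the net $(\xi_i)$ is not $\ell^\infty$-bounded, so this use of $\nor{x_n}_\infty$ is unavailable and one must recover a substitute for the uniform convergence of $(\alpha_t)$ along $(\xi_i)$. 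Here one exploits the two features the vectors $\xi_i$ possess and a generic $L^2$-central net does not: each $\xi_i$ is finitely supported with $\nor{\xi_i}_2=1$, so commutators can be estimated coefficient-by-coefficient (by $2\sqrt{\abs{F_i}}\,\nor{\alpha_{-t}(u_\g)-u_\g}_2$) after a diagonal coupling of the deformation parameter to $\abs{F_i}$; and the conjugation-invariance, which, because $(\alpha_t)$ is $\Cal G$-mixing, lets one push the resulting estimates through the relative-coarseness mechanism to obtain the localization of $\mu_i$ claimed at the end of Step~1. Making this substitute for boundedness rigorous is the technical heart of the proof.

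\emph{Step 3: contradiction from the relative i.c.c.\ hypothesis.}
Step~2 yields conjugation-almost-invariant probability measures $\mu_i$ on $\G\setminus\{e\}$ with $\mu_i\big(\bigcup_{\Sigma\in\Cal F}\Sigma\big)\to1$. But the conjugation action $\G\curvearrowright\G\setminus\{e\}$ has all orbits infinite---these are the nontrivial conjugacy classes of the i.c.c.\ group $\G$---and the hypothesis that $\G$ is i.c.c.\ over every $\Sigma\in\Cal G$ is, by Definition~\ref{def-icc}, exactly what makes each orbit meet $\bigcup_{\Sigma\in\Cal F}\Sigma$ in a set too thin to carry conjugation-almost-invariant mass; equivalently, $(L\G)'\cap\big(\bigvee_{\Sigma\in\Cal F}L\Sigma\big)^\omega=\bb C1$. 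Concretely, restricting and renormalizing $\mu_i$ on a fixed infinite orbit $\Cal C\cong\G/C_\G(g)$ gives an almost-invariant probability vector for the transitive action $\G\curvearrowright\Cal C$ concentrated on the fixed finite set $\Cal C\cap\bigcup_{\Sigma\in\Cal F}\Sigma$; choosing finitely many $h_j\in\G$ that translate this finite set onto pairwise disjoint copies of itself shows its mass must tend to $0$, and a routine elaboration covers the case of mass spread over infinitely many orbits. This contradicts $\mu_i\big(\bigcup_{\Sigma\in\Cal F}\Sigma\big)\to1$ together with $\xi_i\perp\bb C1$ and $\nor{\xi_i}_2=1$. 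Hence $\G$ admits no such net and is therefore not inner amenable, which is Theorem~\ref{noninner-amenable}.
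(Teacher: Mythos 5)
Your overall architecture --- feed the inner-amenability vectors into the spectral-gap/transversality machinery of Theorem \ref{controlcentralseq}, localize them onto bounded-radius balls of the quasi-cocycle, and derive a contradiction from the relative i.c.c.\ hypothesis --- is exactly the paper's. However, the ``main obstacle'' you identify in Step 2, and the workaround you build around it, are both off the mark. The spectral gap estimate used in the paper is the Hilbert-space inequality $\sum_{k\in K}\nor{u_k\xi-\xi u_k}_2\geq L\nor{\xi}_2$ valid for \emph{all} $\xi\in L^2(\tilde M)\ominus L^2(M)$, and the almost-bimodularity of $V_t$ (Proposition \ref{almostbimodular}) is uniform over the $L^2$-unit ball; neither requires an $\ell^\infty$-bound on the central net. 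The only input needed is $\nor{[u_k,\xi_n]}_2\to 0$ for $k$ in a fixed finite set, which is precisely the inner amenability hypothesis (in its $C^*$-algebraic form: asymptotic $\nor{\cdot}_2$-commutation with $C_r^*(\G)$ --- note that your claim of asymptotic commutation with all of $L\G$ is an over-claim, since from commutation with group unitaries one only passes to the operator-norm closure). The paper therefore runs the estimate (\ref{511''}) verbatim on the vectors $\xi_n$ and gets $\lim_{t\to 0}\sup_n\nor{\xi_n-V_t(\xi_n)}_2=0$, hence a \emph{single} radius $C$ with $\sup_n\nor{\xi_n-P_{B'_C}(\xi_n)}_2\leq\ve$. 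Your proposed substitute --- coefficient-by-coefficient bounds of size $2\sqrt{\abs{F_i}}\,\nor{\alpha_{-t}(u_\g)-u_\g}_2$ with the deformation parameter coupled to $\abs{F_i}$ --- would destroy exactly this uniformity in $i$ (you would obtain radii $C_i\to\infty$), so the ``technical heart'' of your Step 2 does not close as written; fortunately it is not needed.

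Step 3 also has a genuine gap. What Corollary \ref{quasi-control} (together with properness, in the array case) actually yields is that the sets $\g B'_C\g^{-1}\cap B'_C$ are contained in finite unions of \emph{double translates} $F\Sigma F$, not in $\bigcup_{\Sigma\in\Cal F}\Sigma$ itself, so the measures $\mu_i$ concentrate on $\bigcup_{\Sigma\in\Cal G_o}F\Sigma F$. Your contradiction then rests on the assertion that a conjugacy class meets $\bigcup_{\Sigma}\Sigma$ in a finite set; this neither follows from the relative i.c.c.\ hypothesis nor is true in general, since $\Sigma$ may contain infinitely many elements of a single conjugacy class. Definition \ref{def-icc} is phrased precisely in terms of the sets $F\Sigma F$: it lets one conjugate $F'\Sigma G'$ entirely off itself, and the paper iterates this to manufacture infinitely many pairwise disjoint sets $F_\ell\Sigma G_\ell\setminus\{e\}$, each carrying mass at least $D_\ve>0$ of $\abs{\xi_n}^2$ by conjugation-almost-invariance, contradicting $\nor{\xi_n}_2=1$. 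Moreover, your reduction to a single fixed orbit fails precisely when the mass of $\mu_i$ spreads over infinitely many conjugacy classes --- the case you defer to ``routine elaboration'' is the one the double-coset disjointification is designed to handle uniformly.
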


Since a non-amenable group is known to have positive first $\ell^2$-Betti number if and only if it admits an unbounded $1$-cocycle into its left-regular representation \cite{PeTho}, Corollary 2.4, we have the following easy 
corollary. Surprisingly, to the best of our knowledge this is the first time this result has appeared in print, though we were informed by Taka Ozawa that he had previously obtained this result in unpublished work.

\begin{mcor}\label{betti} If $\G$ is an i.c.c.\ countable discrete group with positive first $\ell^2$-Betti number, then $\G$ is not inner amenable.
\end{mcor}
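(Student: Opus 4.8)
The plan is to deduce Corollary \ref{betti} directly from Theorem \ref{noninner-amenable}, applied with the family $\Cal G=\{\{e\}\}$ consisting only of the trivial subgroup, in which case condition $\mathsf{NC}(\Cal G)$ is exactly condition $\mathsf{NC}$. First I would record the standard fact that a group with positive first $\ell^2$-Betti number is automatically infinite and non-amenable: amenable groups, and finite groups in particular, have $\beta_1^{(2)}=0$. Hence the cited result of Peterson--Thom (\cite{PeTho}, Corollary 2.4) applies and produces an unbounded $1$-cocycle $b\colon\G\to\ell^2\G$ for the left-regular representation $\la$.

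Next I would verify that this input matches the first bullet of condition $\mathsf{NC}$. A genuine $1$-cocycle is a quasi-cocycle with zero defect, so $b$ is in particular an unbounded quasi-cocycle; if one insists on an orthogonal rather than unitary representation, one passes to the realification of $\ell^2\G$, which does not affect unboundedness. The representation $\la$ is non-amenable precisely because $\G$ is non-amenable (amenability of $\la$ is equivalent to amenability of $\G$), and $\la$ is mixing, since its matrix coefficients $\ip{\la_\g\xi}{\eta}$ tend to $0$ as $\g\to\infty$ for all $\xi,\eta\in\ell^2\G$; a fortiori $\la$ is mixing with respect to the trivial subgroup. Therefore $\G$ satisfies condition $\mathsf{NC}$, i.e.\ $\mathsf{NC}(\Cal G)$ for $\Cal G=\{\{e\}\}$.

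Finally, since $\G$ is i.c.c.\ it is trivially i.c.c.\ over $\{e\}$ (cf.\ Definition \ref{def-icc}), so all hypotheses of Theorem \ref{noninner-amenable} hold with $\Cal G=\{\{e\}\}$, and we conclude that $\G$ is not inner amenable. There is essentially no obstacle beyond this bookkeeping: the entire content is carried by Theorem \ref{noninner-amenable}, and the only points requiring a word of care are the passage from $\beta_1^{(2)}>0$ to non-amenability, the real-versus-complex issue for the coefficients, and recognizing the Peterson--Thom cocycle as a quasi-cocycle into a non-amenable mixing (relative to $\{e\}$) representation.
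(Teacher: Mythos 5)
Your proposal is correct and follows the same route as the paper: positivity of $\beta_1^{(2)}$ forces non-amenability, Peterson--Thom then yields an unbounded $1$-cocycle into $\la_\G$, which is non-amenable and mixing, so condition $\mathsf{NC}$ holds and Theorem \ref{noninner-amenable} applies with $\Cal G=\{\{e\}\}$. The points you flag as needing care (non-amenability from $\beta_1^{(2)}>0$, mixing and non-amenability of $\la_\G$, i.c.c.\ over the trivial subgroup) are exactly the ones the paper handles in its remark following Theorem \ref{non-inner}.
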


When combined with the main result of Hull and Osin from \cite{HO} our theorem also recovers the following earlier result due to Dahmani, Guirardel, and Osin.

\begin{mcor}[Dahmani, Guirardel, and Osin \cite{DGO}] If $\G$ is an i.c.c., countable, discrete group which admits a non-degenerate, hyperbolically embedded subgroup, then $\G$ is not inner amenable.
\end{mcor}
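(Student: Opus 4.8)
The plan is to deduce the statement as a short consequence of Theorem~\ref{noninner-amenable}, once we know that $\G$ satisfies condition $\mathsf{NC}$ relative to the trivial family; for this we borrow the quasi-cocycle produced by Hull and Osin.

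First I would record some generalities about $\G$. A countable group admitting a non-degenerate hyperbolically embedded subgroup is acylindrically hyperbolic by \cite{Osin-ac} and, in particular, contains a non-abelian free subgroup by \cite{DGO}; hence $\G$ is non-amenable. Consequently the left regular representation $\la_\G$ on $\ell^2(\G)$ is a non-amenable orthogonal representation, and since $\G$ is i.c.c.\ and therefore infinite, $\la_\G$ is mixing --- equivalently, mixing with respect to the family whose only member is the trivial subgroup (for that family the ``neighborhoods'' of $\Cal G$ are just finite sets, and $\la_\G$ has matrix coefficients in $c_0(\G)$).

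Next I would invoke the main result of \cite{HO}: a group with a non-degenerate hyperbolically embedded subgroup admits an unbounded quasi-cocycle valued in $\ell^2(\G)$. Concretely, Hull and Osin show that the kernel of the comparison map $H^2_b(\G,\ell^2\G)\to H^2(\G,\ell^2\G)$ is infinite-dimensional; any non-zero class in this kernel is represented by the coboundary of a quasi-cocycle $q\colon\G\to\ell^2(\G)$, and such a $q$ cannot be bounded, since a bounded $q$ would make its coboundary the differential of a bounded cochain and hence represent the zero class. Combining this with the previous paragraph, $\G$ admits an unbounded quasi-cocycle into a non-amenable orthogonal representation that is mixing with respect to the trivial subgroup, i.e.\ $\G$ satisfies condition $\mathsf{NC}$.

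Finally, the hypothesis of Theorem~\ref{noninner-amenable} that $\G$ be i.c.c.\ over each member of $\Cal G$ reduces, for $\Cal G$ the trivial family, to the requirement that $\G$ be i.c.c.\ (Definition~\ref{def-icc}), which holds by assumption; thus Theorem~\ref{noninner-amenable} applies and shows that $\G$ is not inner amenable. I expect the only point requiring more than bookkeeping to be the choice of representation: one must make sure the quasi-cocycle is carried by a \emph{genuinely} mixing representation, rather than one that is merely mixing relative to the conjugates of the hyperbolically embedded subgroup. This is handled by insisting from the outset that the quasi-cocycle be $\ell^2(\G)$-valued, which is precisely what \cite{HO} provides, so that the trivial family suffices and the (weaker) relative machinery of Theorem~\ref{noninner-amenable} is not even needed here.
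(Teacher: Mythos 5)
Your proof is correct and follows essentially the same route as the paper: Hull--Osin supplies an unbounded quasi-cocycle into a multiple of the left-regular representation, which is non-amenable and mixing because $\G$ is non-amenable (being acylindrically hyperbolic and non-elementary), so condition $\mathsf{NC}$ holds for the trivial family and Theorem~\ref{noninner-amenable} applies via Proposition~\ref{icc}. The only cosmetic difference is that the Hull--Osin theorem naturally produces a quasi-cocycle valued in $\la_\G^{\oplus\infty}$ rather than in $\ell^2(\G)$ itself, which changes nothing since any multiple of the regular representation of a non-amenable infinite group is still non-amenable and mixing.
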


\noindent Additionally, for such groups the authors were also able to demonstrate in \cite{DGO} simplicity of the reduced $\rm C^*$-algebra $C^*_r(\G)$. We were not able to obtain any positive results for the more general 
class of groups satisfying condition $\mathsf{NC}$, though we remark on some possible connections between the results outlined here and $\rm C^*$-simplicity in section \ref{simplicity}.

Recall, a II$_1$ factor is said to be \emph{prime} if it is not isomorphic to a tensor product of diffuse factors.

\begin{question} If $\G$ is an exact, non-amenable, i.c.c., countable, discrete group which admits an unbounded quasi-cocycle into $\ell^2(\G)^{\oplus\infty}$, is $L\G$ prime? \end{question}

\noindent Note that Peterson showed (Corollary 4.6 in \cite{PetL2}) that primeness of $L\G$ does follow from the much more restrictive assumption that $\G$ admits an unbounded $1$-cocycle into $\ell^2(\G)^{\oplus\infty}$ 
(the assumption of exactness is not necessary). Primeness is also known when the quasi-cocycle is proper, even extending to the case of proper arrays, by Theorem A in \cite{CS}, though this is implicitly due to Ozawa 
(Theorem 1 in \cite{OzSolid} via Remark 1.10 in \cite{CS}).  An affirmative answer to the question would be sharp: by Proposition 1.4 in \cite{CS} the group $\bb F_2\times \bb F_2$, for instance, admits an unbounded 
(but not proper) array into its left-regular representation.


\subsection*{Outline of the paper} The first section contains the necessary background material, definitions, and examples.  The second section consists of the statement and proofs of the main new technical results 
on quasi-cocycles. The proofs of the main results stated above, as well as other applications of the technique, form the third and last section of the paper.


\section{Background and Methods}\label{sec-background}

\subsection{Arrays and quasi-cocycles} Arrays were introduced by the first two authors in \cite{CS} as a language for unifying the concepts of length functions and $1$-cocycles into orthogonal representations. 
In practice arrays can be used either to strengthen the concept of a length function by introducing a representation or to introduce some ``geometric'' flexibility to the concept of a $1$-cocycle. See section 
1 of \cite{CS} for an in-depth discussion of this concept and its relation with ``negative curvature'' in geometric group theory).

\begin{definition} Assume that $\G$ is a countable, discrete group together with  $\mathcal G=\{\Sigma_i\,:\,i\in I\}$, a family of subgroups  of $\G$ and $\pi : \G\ra \OO(\mathcal H)$, an orthogonal representation. 
Following Definition 1.4 from \cite{CS}, we say that the group $\G$ admits an \emph{array} into $\mathcal H$ if there exists a map $r: \G\ra \HH$ which satisfies the following \emph{bounded equivariance} condition:

\begin{equation*}\sup_{\de\in \G}\|r(\g \de)-\pi_\g(r(\de)) \|=C(\g)<\infty, \text{ for all }\g\in \G.
\end{equation*}

\noindent An array $r$ is said to be \emph{symmetric} \{\emph{anti-symmetric}\} if we have that \[\pi_\g(r(\g^{-1}))=r(\g)\ \{\pi_\g(r(\g^{-1}))= -r(\g)\},\] for all $\g\in\G$. It is \emph{proper relative to $\mathcal G$} 
if for every $C>0$ there are finite subsets $F\subset G$ and $\Cal K\subset \Cal G$ such that \[ B_C := \{\g\in\G : \nor{r(\g)}\leq C\} \subset \bigcup_{K\in\Cal K} FKF.\]

\end{definition}

\noindent For a detailed list of properties of groups that admit nontrivial arrays the reader may consult  \cite{CS, CSU}. 

 The main source of examples of arrays on groups are quasi-cocycles. As before let $\G$ be a countable group together with  $\mathcal G=\{\Sigma_i\,:\,i\in I\}$ a family of subgroups  of $\G$ and let 
 $\pi : \G\ra \OO(\mathcal H)$ be an orthogonal representation. 
 
 \begin{definition} A map $q:\G\ra \mathcal H$ is called a quasi-cocycle if there exists a constant $D\geq 0$  such that
\begin{equation}\label{quasicocyclerel1}\|q(\g \la)-\pi_\g(q(\la)-q(\g)) \|\leq D, \text{ for all }\g,\la\in \G.
\end{equation} 

\noindent The infimum over all such $D$ is denoted by $D(q)$ and is called the \emph{defect} of $q$. When the defect is zero $q$ is actually a $1$-cocycle with coefficients in $\pi$ (i.e. an element in $Z^1(\G,\pi)$).  
Any bounded map $b:\G \ra \mathcal H$ is automatically a quasi-cocycle whose error does not exceed three times the uniform bound of $b$.  

\end{definition}

 It was observed by Thom \cite{Tho} that any quasi-cocycle lies within bounded distance from an anti-symmetric one. We denote the space of anti-symmetric quasi-cocycles associated to the representation 
 $\pi$ as $QZ^1_{as}(\G,\pi)$ and the subspace of those which are bounded as $QB^1_{as}(\G,\pi)$. The first quasi-cohomology space is then defined to be $QH^1_{as}(\G,\pi) := QZ^1_{as}(\G,\pi)/QB^1_{as}(\G,\pi)$. In particular, 
 if $\pi$ is the left-regular representation $\lambda_\G$, then $QH^1_{as}(\G,\lambda_\G)$ has the structure of a right $L\G$-module.

\begin{definition} A group $\G$ is said to be in the class $\Cal D_{reg}$ if $\dim_{L\G} QH^1_{as}(\G,\lambda_{\G})\not= 0$.
\end{definition}

\noindent By Lemma 2.8 in \cite{Tho} and Corollary 2.4 in \cite{PeTho} it is observed that a countable, discrete group $\G$ is in the class $\Cal D_{reg}$ if and only if it admits an unbounded quasi-cocycle into its left-regular 
representation.

\subsection{Groups satisfying condition $\mathsf{NC}$}

We will now describe some specific examples and constructions of classes of groups satisfying condition $\mathsf{NC}$ with respect to some explicit families of subgroups. While all groups which are (relatively) bi-exact belong to 
this class, stronger results, cf.\ \cite{BrOz, CS, CSU, OP}, are known in this case, so we will focus our attention here on weaker ``negative curvature'' conditions which can be used to construct unbounded quasi-cocycles. Before 
doing so, in order to make the exposition more self-contained and to furnish some more familiar examples, we will recall for classes of groups arising  from canonical groups constructions like (semi-)direct products, (amalgamated)
free products, and HNN-extensions, how to go about constructing quasi-cocycles algebraically. For more details we refer to section 2.2 in \cite{CSU}.

\begin{examples} Each group in the following classes satisfies condition $\mathsf{NC}$ with respect to the associated family of subgroups $\Cal G$.

\begin{enumerate}

\renewcommand{\labelenumi}{{\bf \alph{enumi}.}}

\item If $\Sigma< \G_1,\G_2$ are groups and $\G :=\G_1\star_\Sigma\G_2$ is the corresponding amalgamated free product then it is well known that $H^1(\G ,\ell^2(\G/\Sigma))\neq \{0\}$. Let $\Cal G := \{\Sigma\}$.

\item If $\Sigma <\G$ are groups, $\theta:\Sigma \ra \G$ is a monomorphism, then, denoting by  $\G := {\rm HNN}(\G,\Sigma,\theta)$ the corresponding HNN-extension, we have again $H^1(\G ,\ell^2(\G/\Sigma))\neq \{0\}$ and 
$\Cal G := \{\Sigma\}$.

\item Let $H$, $\G$ be countably infinite discrete groups, let $I$ be a $\G$-set, and consider the generalized wreath product group $ H\wr_I\G := (\bigoplus_I H) \rtimes \G$. For every $i \in I$, denote by 
$\G_i := \lbrace \gamma \in \G : \gamma i = i \rbrace$ the stabilizer subgroup of $i$ and let $\Cal G := \{\G_i : i\in I\}$. If either $\G_i$ is not co-amenable for every $i$ or $H$ is non-amenable, then the group 
$  H\wr_I\G$ admits a 1-cocycle into the representation $ \ell^2((  H\wr_I\G) / \G)$, which under the assumptions is easily seen to be non-amenable and mixing with respect to the family of stabilizers $\Cal G$.

\item Let $\G$ be a group that acts on a tree $\mathcal T =(\mathsf V, \mathsf E)$. We assume in addition that for each edge $e\in \mathsf E$ its stabilizer group $\G_e:=\{\g\in \G\,:\,\g e=e \}$ is not co-amenable. 
Since $\G$ acts on a tree, there exists a 1-cocycle into the semi-regular orthogonal representation  $\la_{\mathsf E} :=\bigoplus_{e\in\mathsf E} \ell^2(\G/\G_{e})$ where the group acts by left translation on each summand. 
From the assumptions it is clear that $\la_{\mathsf E}$ is non-amenable and mixing with respect to the family $\mathcal G:=\{\G_e : e\in\mathsf E\}$.

\end{enumerate}
\end{examples}

\noindent Another class of examples comes from lattices in locally compact, second countable (l.c.s.c.) groups.

\begin{example}\label{ex-lattice} Consider a l.c.s.c.\ group $G= G_1\times G_2$, where $G_1$ has property (HH) of Ozawa and Popa \cite{OPII}, i.e., $G_1$ admits a proper cocycle into a non-amenable, mixing orthogonal 
representation. If $\G < G$ is a lattice then $\G$ satisfies condition $\mathsf{NC}(\Cal K)$ for $\Cal K$ the set of subgroups $K < \G$ such that the projection $pr_1(K)$ of $K$ into $G_1$ is pre-compact. 
(Note the lattice assumption is only necessary to ensure non-amenability of the restricted representation.)
\end{example}

We will now describe some recent, innovative methods for building quasi-cocycles through the use of geometric methods in group theory. Some of the first results in this direction come from the seminal work of Mineyev 
\cite{Min} and Mineyev, Monod, and Shalom \cite{MMS}, who showed that if $\G$ is a Gromov hyperbolic group, then $\G$ admits a proper quasi-cocycle into a finite multiple of its left-regular representation, hence belongs to 
$\Cal D_{reg}$. This work was generalized to groups which have relative hyperbolicity to a family of subgroups by Mineyev and Yaman \cite{MinYa}. Hamenst\"adt \cite{Ham-jems} showed that all weakly acylindrical 
groups -- in particular, non-virtually abelian mapping class groups and ${\rm Out}(\bb F_n)$, $n\geq 2$ -- belong to the class $\Cal D_{reg}$. Recently a unified approach to these results was developed by Hull and Osin \cite{HO} 
and independently by Bestvina, Bromberg, and Fujiwara \cite{BBF} to show that for every group $\G$ which admits a non-degenerate, hyperbolically embedded subgroup belongs to the class $\Cal D_{reg}$ via an extension theorem on 
quasi-cohomology. In fact, by very recent work of Osin \cite{Osin-ac} the weak curvature conditions used in both papers, as well as Hamenst\"adt's weak acylindricity condition, are equivalent to the notion of ``acylindrical 
hyperbolicity'' formulated by Bowditch, cf.\ [op.\ cit.].
 
\begin{examples}\label{big-list} Collecting these results together, the following families of groups are known to be acylindrically hyperbolic. In particular they belong to the class $\Cal D_{reg}$, hence satisfy condition $\mathsf{NC}$:

\begin{enumerate}

\renewcommand{\labelenumi}{{\bf \alph{enumi}.}}

\item Gromov hyperbolic groups \cite{Min,MMS};
\item Groups which are hyperbolic relative to a family of subgroups as in \cite{MinYa};
\item The mapping class group $\Cal{MCG}(\Sigma)$ for any (punctured) closed, orientable surface $\Sigma$, provided that it is not virtually abelian \cite{Ham-jems};
\item ${\rm Out}(\bb F_n)$, $n\geq 2$ \cite{Ham-jems};
\item Groups which admit a proper isometric action on a proper ${\rm CAT}(0)$ space \cite{Sisto}.
\end{enumerate}
\end{examples}

We remark that it is unclear whether condition $\mathsf{NC}$ is closed under finite direct sums, though the following partial stability result is easily observed, cf.\ Proposition 1.7 in \cite{CS}.

\begin{prop}\label{nc-product} Let $\G_1$ and $\G_2$ be countable, discrete groups, and let $\Cal G_1$ and $\Cal G_2$ be respective families of subgroups. Consider the direct product $\G = \G_1\times \G_2$ equipped with the family of subgroups $\Cal G := \{\Sigma_1\times \G_2 : \Sigma_1\in\Cal G_1\}\cup\{\G_1\times\Sigma_2 : \Sigma_2\in\Cal G_2\}$. If both $\G_1$ and $\G_2$ either admit a symmetric array into a non-amenable representation which is proper with respect to $\Cal G_i$ or admit an unbounded quasi-cocycle into a non-amenable representation which is mixing with respect to $\Cal G_i$, $i=1,2$, then the same holds for $\G$ with respect to $\Cal G$.
\end{prop}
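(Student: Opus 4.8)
The plan is to transport all the relevant structure from the two factors to $\G=\G_1\times\G_2$ along the coordinate projections $p_i\colon\G\to\G_i$. I would first record the mechanical facts: precomposition with $p_i$ sends an orthogonal representation $\pi_i$ of $\G_i$ to one of $\G$, sends a (symmetric) array for $\pi_i$ to a (symmetric) array for $\pi_i\circ p_i$, and sends a quasi-cocycle of defect $D_i$ to one of defect $D_i$; and since $p_i$ is surjective, unboundedness of an array or quasi-cocycle is preserved. The one point worth isolating is that $\pi_i\circ p_i$ remains \emph{non-amenable}: any $\mathrm{Ad}\big((\pi_i\circ p_i)(\G)\big)$-invariant state on $B(\HH_i)$ restricts, via the inclusion $\G_i\hookrightarrow\G$, to an $\mathrm{Ad}(\pi_i(\G_i))$-invariant state, so amenability of $\pi_i\circ p_i$ would force amenability of $\pi_i$. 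I would also record the complementary fact, used below, that a finite direct sum of non-amenable representations is non-amenable: if $\pi\oplus\pi'$ carried an invariant state $m$, then the orthogonal projection $e$ onto one summand commutes with $(\pi\oplus\pi')(\G)$, one of $m(e)$, $m(1-e)$ is nonzero, and compressing $m$ by that projection exhibits one of $\pi$, $\pi'$ as amenable.

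Next I would set up the dictionary between $\Cal G_i$ and $\Cal G$, the crux being the elementary identity
\[ (F_1\times\{e\})\,(K_1\times\G_2)\,(F_1\times\{e\})\;=\;(F_1K_1F_1)\times\G_2\;=\;p_1^{-1}(F_1K_1F_1)\]
for finite $F_1\subseteq\G_1$ and $K_1<\G_1$, together with its mirror image in the second coordinate; here $K_1\times\G_2$ and $\G_1\times K_2$ run over exactly the subgroups constituting $\Cal G$. It follows that if an array $r_1$ is proper relative to $\Cal G_1$ then $r_1\circ p_1$ is proper relative to $\Cal G$, because taking $p_1$-preimages turns a covering of $\{\g:\nor{r_1(\g)}\leq C\}$ by sets of the form $F_1K_1F_1$ into a covering of $\{\g:\nor{(r_1\circ p_1)(\g)}\leq C\}$ by the corresponding sets $(F_1\times\{e\})(K_1\times\G_2)(F_1\times\{e\})$; and, by the same translation of the defining coset condition, if $\pi_1$ is mixing relative to $\Cal G_1$ then $\pi_1\circ p_1$ is mixing relative to $\Cal G$.

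With these in hand I would conclude by cases. If some factor, say $\G_1$, realizes the first alternative of $\mathsf{NC}(\Cal G_1)$ — a symmetric array $r_1$ into a non-amenable $\pi_1$ that is proper relative to $\Cal G_1$ — then $r_1\circ p_1$ is a symmetric array into the non-amenable $\pi_1\circ p_1$ that is proper relative to $\Cal G$, so $\G$ satisfies the first alternative of $\mathsf{NC}(\Cal G)$, and the other factor plays no role. Otherwise both factors realize the second alternative, with unbounded quasi-cocycles $q_i$ into non-amenable representations $\pi_i$ that are mixing relative to $\Cal G_i$; I would then set $\pi:=(\pi_1\circ p_1)\oplus(\pi_2\circ p_2)$ and $q:=(q_1\circ p_1)\oplus(q_2\circ p_2)$. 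Then $q$ is an unbounded quasi-cocycle for $\pi$, $\pi$ is non-amenable by the two facts above, and $\pi$ is mixing relative to $\Cal G$: given $\xi=\xi_1\oplus\xi_2$, $\eta=\eta_1\oplus\eta_2$ and $\e>0$, choose finite $\Cal K_i\subseteq\Cal G_i$ and $F_i\subseteq\G_i$ witnessing $(\e/2)$-mixing of $\pi_i$; then $\abs{\ip{\pi(\g)\xi}{\eta}}\leq\abs{\ip{\pi_1(p_1(\g))\xi_1}{\eta_1}}+\abs{\ip{\pi_2(p_2(\g))\xi_2}{\eta_2}}<\e$ whenever $p_i(\g)\notin\bigcup_{K_i\in\Cal K_i}F_iK_iF_i$ for both $i$, and the set of $\g$ where this fails is precisely $\bigcup_{K\in\Cal K}FKF$ with $F=F_1\times F_2$ and $\Cal K=\{K_1\times\G_2:K_1\in\Cal K_1\}\cup\{\G_1\times K_2:K_2\in\Cal K_2\}\subseteq\Cal G$ finite. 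Hence $\G$ satisfies the second alternative of $\mathsf{NC}(\Cal G)$.

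The argument is essentially bookkeeping and I do not expect a genuine obstacle; the only points that are not purely formal are the two facts about amenability of orthogonal representations — that it is preserved under pulling back along the surjection $p_i$, and, since non-amenability is \emph{not} inherited by arbitrary subrepresentations, that a finite direct sum of non-amenable representations is non-amenable — together with the observation that passing from $\Cal G_i$ to the larger family $\Cal G$ genuinely weakens both "proper" and "mixing," which is exactly what the coset identity $(F\times\{e\})(K\times\G_2)(F\times\{e\})=p_1^{-1}(FKF)$ delivers.
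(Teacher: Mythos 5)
Your proof is correct. The paper does not actually write out an argument for this proposition --- it is dismissed as ``easily observed, cf.\ Proposition 1.7 in \cite{CS}'' --- and your construction is exactly the intended one: pull back along the coordinate projections $p_i$, take the direct sum $(\pi_1\circ p_1)\oplus(\pi_2\circ p_2)$ and $(q_1\circ p_1)\oplus(q_2\circ p_2)$ in the quasi-cocycle case, and use the identity $(F_1\times\{e\})(K_1\times\G_2)(F_1\times\{e\})=p_1^{-1}(F_1K_1F_1)$ to convert smallness relative to $\Cal G_i$ into smallness relative to $\Cal G$; the two non-formal points you isolate (non-amenability is preserved under pullback along a surjection and under finite direct sums, both via compression of an ${\rm Ad}$-invariant state) are argued correctly, and indeed, since $\Cal G$ is coarse enough that a single pullback $\pi_i\circ p_i$ is already mixing relative to $\Cal G$, your Case 2 direct sum is harmless overkill rather than a necessity.
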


\noindent On the other hand, it is known that the classes of non-Gamma factors and non-inner amenable groups are closed under, respectively, finite tensor products (a non-trivial result of Connes \cite{Connes-inj}) and finite direct sums (folklore\footnote{Precisely, this follows from the observation that $\G$ is not inner amenable if and only if the action of $\G$ by conjugation on $\ell^2(\G\setminus\{e\})$ is non-amenable.}). This suggests that the condition $\mathsf{NC}$ as formulated may not be an optimal condition for establishing results along the lines of those stated in the introduction.

\subsection{A family of deformations arising from arrays}\label{sec:deformations}

We briefly recall from \cite{CS} the construction of a deformation from an array based on the main construction in \cite{Sin}. To do this we need to first recall the construction of the Gaussian action associated to a 
representation (see for example \cite{PeSi}). Let $\pi:\G \rightarrow \mathcal{O}(\mathcal H)$ be an orthogonal representation on a real Hilbert space, and consider the abelian von Neumann algebra $(D, \tau)$ generated by 
a family of unitaries $\omega(\xi),\xi \in \mathcal H$, subject to the following relations:

\begin{enumerate}
\item $\omega(\xi_1)\omega(\xi_2)=\omega(\xi_1+\xi_2)$, for any $\xi_1, \xi_2 \in \mathcal H$;
\item $\omega(-\xi)=\omega(\xi)^*$, for any $\xi \in \mathcal H$;
\item $\tau(\omega(\xi))=\exp(-\|\xi \|^2)$, for any $\xi \in \mathcal H$.
\end{enumerate}

The \emph{Gaussian action} of $\G$ on $(D, \tau)$ is defined by $\tilde{\pi}_\g(\omega(\xi))=\omega(\pi_\g(\xi))$, for all $\g \in \G$ and $\xi \in \mathcal H$. 
 \begin{definition}Let $\G \ca^\sigma(A, \tau)$ be a trace-preserving action of $\G$ on a finite von Neumann algebra $A$ and let $M=A \rtimes_{\sigma} \G$ be the cross-product von Neumann algebra. The \emph{Gaussian dilation} 
 associated to $M$ is the von Neumann algebra $\tilde{M}=(A\bar{\otimes}D)\rtimes_{\sigma \otimes \tilde{\pi}} \G$.
\end{definition}

 Let $q:\G \rightarrow \mathcal H$ be an array for the representation $\pi$ as above. The deformation is constructed as follows.  For each $t\in \mathbb{R}$, define the unitary
 $V_t\in \mathcal U(L^2(A)\otimes L^2(D)\otimes \ell^2(\G))$ by \[V_t(a \otimes d \otimes \delta_\g) := a \otimes \omega(tq(\g))d \otimes \delta_\g,\] for all $a \in L^2(A), d \in L^2(D)$, and $\g \in \G$. In \cite{CS} it was 
 proved that $V_t$ is a strongly continuous one parameter group of unitaries having the following transversality property:

\begin{prop}[Lemma 2.8 in \cite{CS}]\label{transversality}For each $t$ and any $\xi \in L^2(M)$, we have 
\begin{equation}2\|V_t(\xi)-e \cdot V_t(\xi)\|_{2}^2 \geq \|\xi-V_t(\xi)\|_{2}^2, \end{equation}
where $e$ denotes the orthogonal projection of $L^2(\tilde{M})$ onto $L^2(M)$.
 \end{prop}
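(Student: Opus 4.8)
The plan is to prove the inequality by passing to the Fourier decomposition over $\G$ and reducing it to an elementary scalar estimate. First I would fix the relevant identifications. Writing $\Omega\in L^2(D)$ for the cyclic vector implementing $\tau$, one has $L^2(\tilde M)=L^2(A)\,\bar{\otimes}\,L^2(D)\,\bar{\otimes}\,\ell^2(\G)$, and the copy of $L^2(M)$ sitting inside it is precisely $L^2(A)\,\bar{\otimes}\,\mathbb C\Omega\,\bar{\otimes}\,\ell^2(\G)$ (since $M=A\rtimes\G$ embeds in $\tilde M$ via $a u_\g\mapsto (a\otimes 1)\tilde u_\g$). Hence $e=\mathrm{id}\otimes p_\Omega\otimes\mathrm{id}$, where $p_\Omega$ is the rank-one projection of $L^2(D)$ onto $\mathbb C\Omega$, i.e. $p_\Omega v=\langle v,\Omega\rangle\Omega$. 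Consequently an arbitrary $\xi\in L^2(M)$ can be written as $\xi=\sum_{\g\in\G}a_\g\otimes\Omega\otimes\delta_\g$ with $a_\g\in L^2(A)$ and $\nor{\xi}_2^2=\sum_\g\nor{a_\g}_2^2<\infty$; since $V_t$ and $e$ are bounded and both sides of the claimed inequality are continuous in $\xi$, it suffices to treat such $\xi$.

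Next I would compute the two quantities appearing in the statement directly. From the definition of the deformation, $V_t\xi=\sum_\g a_\g\otimes\omega(tq(\g))\Omega\otimes\delta_\g$, and the only input needed about the Gaussian algebra is the moment formula $\langle\omega(\zeta)\Omega,\Omega\rangle=\tau(\omega(\zeta))=\exp(-\nor{\zeta}^2)$, which I apply with $\zeta=tq(\g)$, so $\nor{\zeta}^2=t^2\nor{q(\g)}^2$. Using this together with orthogonality of the $\delta_\g$ gives on the one hand (the inner product below being real)
\begin{equation*}
\nor{\xi-V_t\xi}_2^2=2\nor{\xi}_2^2-2\langle V_t\xi,\xi\rangle=2\sum_{\g\in\G}\nor{a_\g}_2^2\,(1-e^{-t^2\nor{q(\g)}^2}),
\end{equation*}
and, since $p_\Omega\,\omega(tq(\g))\Omega=e^{-t^2\nor{q(\g)}^2}\Omega$ and $V_t\xi=e\cdot V_t\xi+(V_t\xi-e\cdot V_t\xi)$ is an orthogonal decomposition, on the other hand
\begin{equation*}
\nor{V_t\xi-e\cdot V_t\xi}_2^2=\nor{V_t\xi}_2^2-\nor{e\cdot V_t\xi}_2^2=\sum_{\g\in\G}\nor{a_\g}_2^2\,(1-e^{-2t^2\nor{q(\g)}^2}).
\end{equation*}

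Finally I would conclude by comparing these two expressions termwise: for every real $x\geq 0$ one has $1-e^{-2x}\geq 1-e^{-x}$, because $e^{-2x}\leq e^{-x}$. Applying this with $x=t^2\nor{q(\g)}^2$ for each $\g$ and summing against the nonnegative weights $\nor{a_\g}_2^2$ yields $\nor{V_t\xi-e\cdot V_t\xi}_2^2\geq\tfrac12\nor{\xi-V_t\xi}_2^2$, which is exactly the asserted inequality $2\nor{V_t\xi-e\cdot V_t\xi}_2^2\geq\nor{\xi-V_t\xi}_2^2$. There is no genuine obstacle here beyond carefully pinning down the embedding $L^2(M)\subset L^2(\tilde M)$ and the way $e$ acts on the Gaussian factor; in particular the array/quasi-cocycle hypotheses on $q$ play no role, since transversality is a formal feature of ``diagonal multiplication by a Gaussian.'' (Conceptually, the same estimate reflects the malleability of the deformation: the automorphism of $\tilde M$ induced by $\omega(\zeta)\mapsto\omega(-\zeta)$ fixes $M$ pointwise and conjugates $V_t$ to $V_{-t}$, and combining this flip with the triangle inequality gives a transversality bound of the same shape; the direct computation above is what pins down the constant.)
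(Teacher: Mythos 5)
Your proof is correct, and it is essentially the argument behind the cited Lemma 2.8 of \cite{CS} (the paper itself offers no proof, deferring to that reference): decompose $\xi$ over the group, use $e=\mathrm{id}\otimes p_\Omega\otimes\mathrm{id}$ together with the moment formula $\tau(\omega(\zeta))=e^{-\nor{\zeta}^2}$ to get $\nor{\xi-V_t\xi}_2^2=2\sum_\g\nor{a_\g}_2^2(1-e^{-t^2\nor{q(\g)}^2})$ and $\nor{e^\perp V_t\xi}_2^2=\sum_\g\nor{a_\g}_2^2(1-e^{-2t^2\nor{q(\g)}^2})$, and conclude from $1-e^{-2x}\ge 1-e^{-x}$. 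Your closing observation is also apt: since $V_t$ is only a path of isometries (not automorphisms of $\tilde M$, as $q$ is merely an array), the direct computation rather than Popa's flip/malleability argument is the right mechanism here, and the quasi-cocycle hypotheses indeed play no role in this step.
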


The following ``asymptotic bimodularity'' property of the deformation $V_t$ is the most crucial consequence of the array property. It follows easily from Lemma 2.6 in \cite{CS}.

\begin{prop}\label{almostbimodular}Let $\G$ be a group, let $\pi:\G\ra \mathcal H$ be a unitary representation and let  $q:\G\ra \HH$ be an array for $\pi$.  Assume that $\G \ca^\sigma A$ is a trace preserving action and let $\G \ca^{\sigma \otimes \pi}A \bar{\otimes} D$ be the Gaussian construction associated to $\pi$.   Denote by $M=A\rtimes \G$ and $\tilde M=A\bar{\otimes} D\rtimes_{\sigma \otimes \tilde{\pi}} \G$ the corresponding crossed product algebras and notice that $M\subset \tilde M$. To $q$ we associate the path of isometries $V_t: L^2(M)\ra L^2(\tilde M)$ obtained by restriction from the $V_t$'s constructed above. Then for every $x,y \in A \rtimes_{\sigma, r} \G$ (the reduced crossed product) we have that 
\begin{equation}\lim_{t\ra 0}\sup_{\|\xi\|_2 \leq 1} \|xV_t(\xi)y-V_t(x\xi y)\|_2=0. \end{equation}  
\end{prop}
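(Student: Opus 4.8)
\textbf{Proof plan for Proposition \ref{almostbimodular}.}

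The plan is to reduce the statement to the two generating types of elements of the reduced crossed product, namely $a \in A$ and the canonical group unitaries $u_\g$, $\g \in \G$, and then exploit bilinearity. Since $A \rtimes_{\sigma,r} \G$ is spanned (in the appropriate topology) by elements of the form $a u_\g$, and since the family of maps $\xi \mapsto xV_t(\xi)y - V_t(x\xi y)$ depends bilinearly and norm-continuously on $(x,y)$ with the relevant quantities controlled uniformly in $t \in [-1,1]$, it suffices to establish the estimate when $x$ and $y$ each range over $A$ and over $\{u_\g : \g\in\G\}$. The first reduction I would carry out is therefore to fix $\ve > 0$ and approximate general $x,y$ in $\|\cdot\|_2$ by finite linear combinations of such elementary words, checking that the error terms introduced are $O(\ve)$ uniformly for $t$ small; this is routine once one observes that $V_t$ is an isometry and that left/right multiplication by a fixed element of $M$ is bounded on $L^2(\tilde M)$.

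Next I would compute the commutator $V_t(\xi) \mapsto xV_t(\xi)y - V_t(x\xi y)$ explicitly on a vector $\xi = a \otimes 1 \otimes \de_\la \in L^2(M)$ for the generating choices of $x$ and $y$. For $x = b \in A$ (acting on the left, diagonally on the $D$-coordinate as the identity) and $y = c \in A$, one has $xV_t(\xi)y = \s_\la(\cdot)$-twisted products in the $A$-coordinate and $V_t(x\xi y)$ has the same $A$-part, while the $D$-coordinate $\om(tq(\la))$ is unchanged; here the commutator actually vanishes, so no estimate is needed. The substantive case is $x = u_\g$ and $y = u_\de$: then $u_\g V_t(\xi) u_\de$ carries the $D$-coordinate to $\tilde\pi_\g(\om(tq(\la))) = \om(t\pi_\g(q(\la)))$, whereas $V_t(u_\g \xi u_\de)$ carries it to $\om(t q(\g\la\de))$. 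The difference of the two vectors thus has squared $L^2$-norm controlled, using relation (3) of the Gaussian construction and the elementary inequality $|e^{-s} - e^{-s'}| \le |s - s'|$ for $s,s' \ge 0$ applied to the parallelogram identity $\|\om(\xi_1) - \om(\xi_2)\|_2^2 = 2 - 2\exp(-\|\xi_1 - \xi_2\|^2/4)$ (after the appropriate normalization), by a constant times $t^2\,\|\pi_\g(q(\la)) - q(\g\la\de)\|^2$ up to a further shift. Here the array property enters: by bounded equivariance, $\|\pi_\g(q(\la)) - q(\g\la)\| \le C(\g)$ and $\|q(\g\la) - q(\g\la\de)\| \le C'(\de)$ (the latter after another application of bounded equivariance, absorbing a $\pi$), so the relevant displacement is bounded by a constant $C(\g,\de)$ \emph{independent of $\la$}. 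Consequently the supremum over unit $\xi$ of the commutator norm is at most $|t|\cdot C(\g,\de) \to 0$ as $t \to 0$, which is exactly the claim for the generating words; Lemma 2.6 of \cite{CS} packages precisely this computation.

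Finally I would assemble the pieces: given $x, y$ in the reduced crossed product and $\ve > 0$, choose finite-support approximants $x_0 = \sum_{\g \in F} a_\g u_\g$, $y_0 = \sum_{\de \in F'} b_\de u_\de$ with $\|x - x_0\|_2, \|y - y_0\|_2 < \ve$ (and with $\|x_0\|, \|y_0\|$ controlled, using that in the reduced norm one can take these to be Fej\'er-type averages so boundedness is preserved); bound $\|xV_t\xi y - x_0 V_t\xi y_0\|_2$ and $\|V_t(x\xi y) - V_t(x_0\xi y_0)\|_2$ by $O(\ve)$ uniformly in $\|\xi\|_2 \le 1$ and $t \in [-1,1]$; apply the generating-word estimate to each of the finitely many terms of $x_0 V_t \xi y_0 - V_t(x_0 \xi y_0)$ to get a bound $\le |t| \sum_{\g \in F, \de \in F'} \|a_\g\|\,\|b_\de\|\,C(\g,\de)$; then let $t \to 0$ and $\ve \to 0$. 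The main obstacle, and the only place any care is genuinely required, is the bookkeeping in this last approximation step: one must ensure that the reduction to finitely supported elements does not secretly cost a constant depending on $t$, which is why it is important to work with bounded approximants in the reduced crossed product rather than arbitrary $L^2$-approximants, and that the defect/equivariance constants $C(\g,\de)$ stay finite — both of which follow directly from the definitions, the former from amenability-free Fej\'er summation in $A \rtimes_{\sigma,r}\G$ and the latter from the array axiom. Everything else is a direct computation with the Gaussian relations (1)–(3).
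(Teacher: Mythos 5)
Your plan is the standard argument and is essentially what the paper has in mind: the paper gives no proof of this proposition at all, deferring entirely to Lemma 2.6 of \cite{CS}, and that lemma is precisely your key estimate $\sup_{\la}\|q(\g\la\de)-\pi_\g(q(\la))\| = C(\g,\de)<\infty$, from which the statement follows by the Gaussian norm computation and the two-step approximation you describe. Two points need to be made precise, and they are exactly the two places where your write-up, taken verbatim, would not go through.

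First, the inequality $\|q(\g\la)-q(\g\la\de)\|\le C'(\de)$ uniformly in $\la$ does \emph{not} follow from bounded equivariance alone: that axiom controls left translation only, and for a general array the right-translation displacement $\|q(\la\de)-q(\la)\|$ can be unbounded in $\la$. What rescues the step (and what your phrase ``absorbing a $\pi$'' is silently using) is the (anti-)symmetry of the array: writing $q(\la\de)=\pm\,\pi_{\la\de}\bigl(q(\de^{-1}\la^{-1})\bigr)$ and applying bounded equivariance with left factor $\de^{-1}$ yields $\|q(\la\de)-q(\la)\|\le C(\de^{-1})$, uniformly in $\la$; for a quasi-cocycle one instead uses the defect inequality directly. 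This hypothesis is implicit in the paper --- every array entering condition $\mathsf{NC}$ is symmetric and every quasi-cocycle is replaced by an anti-symmetric one --- but your proof must invoke it explicitly, since the claimed bound is false for arrays satisfying only bounded equivariance. Second, the approximation of $x,y$ must be in operator norm, not in $\|\cdot\|_2$: to bound $\|(x-x_0)V_t(\xi)y\|_2$ uniformly over $\|\xi\|_2\le 1$ you need $\|x-x_0\|_\infty<\ve$, because $V_t(\xi)y$ is merely an $L^2$-vector and an $\|x-x_0\|_2$-bound gives no control against an adversarially concentrated $\xi$. This is exactly why the proposition is stated for the reduced crossed product: finitely supported elements $\sum a_\g u_\g$ are by definition operator-norm dense there, so the required approximants exist with no Fej\'er summation needed. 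With these two corrections your argument is complete and coincides with the intended one.
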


Let $\rho : \G \rightarrow \mathcal O(\mathcal H)$ be a orthogonal representation of $\G$ which admits no non-zero invariant vectors, i.e., $\rho$ is ergodic. In this case $\rho$ has \emph{spectral gap} if and only if it does not weakly contain the trivial representation. The orthogonal representation $\rho$ is \emph{non-amenable} if $\rho \otimes \rho$ is ergodic and has spectral gap.

\begin{prop}[Proposition 2.7 in \cite{PeSi}]\label{spectralgap} The orthogonal representation $\rho$ is non-amenable if and only if the Koopman representation $\tilde{\rho} : \G \rightarrow \mathcal U( L^2(D \ominus \mathbb{C}1))$ associated to the Gaussian action is ergodic and has spectral gap.
\end{prop}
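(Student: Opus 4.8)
The plan is to realize $\tilde\rho$ concretely as the representation of $\G$ on the symmetric Fock space of $\HH_{\mathbb C}$ and then read off ergodicity and spectral gap as properties of $\rho$ alone. Using the Wiener--It\^o chaos decomposition $L^2(D)=\bigoplus_{n\ge 0}\HH_{\mathbb C}^{\odot n}$, where $\HH_{\mathbb C}$ is the complexification of $\HH$, the Koopman representation of the Gaussian action is $\bigoplus_{n\ge 0}\rho_{\mathbb C}^{\odot n}$ with $\rho_{\mathbb C}^{\odot n}$ the restriction of $\rho_{\mathbb C}^{\otimes n}$ to symmetric tensors; the vacuum is the degree-zero line, so $\tilde\rho\cong\bigoplus_{n\ge 1}\rho_{\mathbb C}^{\odot n}$. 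It then suffices to prove two statements: \textbf{(a)} $\tilde\rho$ has no nonzero invariant vector if and only if $\rho$ is weakly mixing (no nonzero finite-dimensional subrepresentation), which is the standard reformulation of ``$\rho\otimes\rho$ is ergodic''; and \textbf{(b)} $\tilde\rho$ weakly contains the trivial representation if and only if $\rho$ is amenable in the sense of Bekka, which (since $\rho$ is orthogonal, so $\rho_{\mathbb C}$ is self-conjugate and $\rho_{\mathbb C}\otimes\overline{\rho_{\mathbb C}}\cong\rho_{\mathbb C}\otimes\rho_{\mathbb C}\cong(\rho\otimes\rho)_{\mathbb C}$) is equivalent to ``$\rho\otimes\rho$ does not have spectral gap''. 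Negating (b) and intersecting with (a) gives exactly that $\tilde\rho$ is ergodic with spectral gap iff $\rho\otimes\rho$ is ergodic with spectral gap, i.e.\ $\rho$ is non-amenable.

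For (a) I would use that tensor products of weakly mixing representations are weakly mixing and weakly mixing representations are ergodic, so weak mixing of $\rho$ forces each $\rho_{\mathbb C}^{\otimes n}$, hence each $\rho_{\mathbb C}^{\odot n}$, to be ergodic; conversely a nonzero finite-dimensional orthogonal subrepresentation $\sigma<\rho$ gives the invariant vector $\sum_i e_i\odot e_i\in\operatorname{Sym}^2\sigma_{\mathbb C}\subseteq\HH_{\mathbb C}^{\odot 2}\subseteq\tilde\rho$. For the easy half of (b), if $\rho$ is amenable then $\rho\otimes_{\mathbb R}\rho$ has a net of almost invariant unit vectors; viewing these as real Hilbert--Schmidt operators $W_k$ with $\|W_k\|_{\HS}=1$ and $\|\rho_g W_k\rho_g^{t}-W_k\|_{\HS}\to 0$, I would set $S_k:=(W_k^{t}W_k)^{1/2}$, a real, positive, symmetric unit-$\HS$-norm operator. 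The Powers--St\o rmer inequality then yields $\|\rho_g S_k\rho_g^{t}-S_k\|_{\HS}\to 0$, and a real positive symmetric Hilbert--Schmidt operator is exactly an element of $\operatorname{Sym}^2_{\mathbb R}\HH$; thus $\operatorname{Sym}^2\rho$ weakly contains $1$, so its complexification $\HH_{\mathbb C}^{\odot 2}$, a summand of $\tilde\rho$, does too, and $\tilde\rho$ has no spectral gap.

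The substantive point is the reverse half of (b): $\tilde\rho\succ 1$ implies $\rho$ amenable. Given finite $F\subseteq\G$ and $\e>0$, pick a unit vector $\xi=\bigoplus_{n\ge 1}\xi_n$ with $\sum_n\|\rho_{\mathbb C}^{\odot n}(g)\xi_n-\xi_n\|^2<\e^2$ for all $g\in F$. Summing over $g\in F$ and averaging against the weights $\|\xi_n\|^2$ (which sum to $1$) produces an index $n=n(\e,F)\ge 1$ for which $\hat\xi_n:=\xi_n/\|\xi_n\|$ is $(\sqrt{|F|}\,\e,\,F)$-almost invariant for $\rho_{\mathbb C}^{\odot n}$. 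I would then collapse all but one tensor leg: viewing $\hat\xi_n\in\HH_{\mathbb C}^{\odot n}\subseteq\HH_{\mathbb C}\otimes\HH_{\mathbb C}^{\otimes(n-1)}$ as a Hilbert--Schmidt operator $T\colon\overline{\HH_{\mathbb C}^{\otimes(n-1)}}\to\HH_{\mathbb C}$ of norm $1$, the representation $\rho_{\mathbb C}\otimes\rho_{\mathbb C}^{\otimes(n-1)}$ acts by $T\mapsto\rho_{\mathbb C}(g)\,T\,U_g$ for suitable unitaries $U_g$, so $S:=TT^{*}$ is a positive trace-class operator on $\HH_{\mathbb C}$ with $\operatorname{Tr}(S)=1$ and, using $\|AB^{*}-CD^{*}\|_1\le\|A-C\|_{\HS}\|B\|_{\HS}+\|C\|_{\HS}\|B-D\|_{\HS}$, with $\|\rho_{\mathbb C}(g)S\rho_{\mathbb C}(g)^{*}-S\|_1\le 2\sqrt{|F|}\,\e$ for $g\in F$. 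Letting $(\e,F)$ vary and taking a weak-$*$ cluster point of the associated normal states on $B(\HH_{\mathbb C})$ gives an $\operatorname{Ad}\rho_{\mathbb C}$-invariant state, so $\rho_{\mathbb C}$, hence $\rho$, is amenable.

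I expect the collapse step to be the main obstacle, or rather the conceptual hurdle: almost invariant vectors in the Fock space are a priori spread across infinitely many chaos degrees, so one cannot simply restrict to a single degree, and the device of replacing a higher-chaos almost invariant vector $\hat\xi_n$ by the density operator $TT^{*}$ is precisely what converts that diffuse Fock-space data into an honest almost invariant density operator for $\operatorname{Ad}\rho_{\mathbb C}$. The remaining work is bookkeeping: ensuring the degree extracted by averaging is $\ge 1$, that the almost-invariance constant degrades only by the controlled factor $\sqrt{|F|}$, and that $S=TT^{*}$ is close to its $\operatorname{Ad}\rho_{\mathbb C}(g)$-translate in trace norm (not merely weakly), which is where the Cauchy--Schwarz bound for Schatten norms above is used.
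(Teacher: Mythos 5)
This proposition is quoted in the paper from Peterson--Sinclair \cite{PeSi} and is not proved here, so there is no in-paper argument to compare against; your proof is correct and follows the same strategy as the cited source. The decomposition $L^2(D)\ominus\mathbb{C}1\cong\bigoplus_{n\ge 1}\mathcal{H}_{\mathbb{C}}^{\odot n}$, the reduction of ergodicity to weak mixing of $\rho$, and, for the key direction, the extraction of a single chaos degree followed by the passage from an almost invariant vector in $\mathcal{H}_{\mathbb{C}}\otimes\mathcal{H}_{\mathbb{C}}^{\otimes(n-1)}$ to an almost $\operatorname{Ad}\rho_{\mathbb{C}}$-invariant density operator $TT^{*}$ on $B(\mathcal{H}_{\mathbb{C}})$ (which correctly circumvents the fact that a direct sum of non-amenable representations need not obviously be non-amenable) are exactly the standard ingredients, and your bookkeeping with the Powers--St\o rmer and Schatten-norm estimates is sound.
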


\section{Technical Results}

In this section we present the new technique for working with quasi-cocycles which will allow us to prove the main results of the paper. Studying the properties of groups through various Hilbert space embeddings, e.g.,  
quasi-cocycles, which are compatible with some representation has emerged as a fairly important tool which captures many interesting aspects regarding the internal (algebraic) structure of the group. The principle is parallel 
to the use of geometric techniques (via a word-length metric) to deduce algebraic structure -- the difference being that while geometric techniques focus on deducing structure from assumptions on the length function itself, 
quasi-cocycle (or array) techniques impose structure from the existence of a generic Hilbert space-valued ``length'' function which is compatible with a specific representation.

Continuing this trend we show next that if the representation is mixing then the finite radius balls with respect to the natural length function induced by the quasi-cocycle are highly ``malnormal.''  As a consequence 
we show that we have large sets in the group which are asymptotically free. 

\begin{prop}\label{mixingmalnormal}Let $\,\G$ be a countable group together with a family of subgroups $\mathcal G$, let $\pi:\G\ra \mathcal O(\mathcal H)$ be an orthogonal representation that is mixing with respect to 
$\mathcal G$. Assume $q:\G\ra \mathcal H$ is a quasi-cocycle and for every $C\geq 0$ we denote by \[B _C :=\{\g\in \G :  \|q(\g)\|\leq C \}. \] Fix $C\geq 0$ and an integer $\ell\geq 2$. Also let $k_1,k_2,\ldots, k_\ell \in \G$ 
be elements such that, for each $1\leq i\leq \ell$ there exists an infinite sequence $(\gamma_{n,i})_n\in B_C$ such that for all $n$ we have $k_1 \gamma_{n,1}k_2\gamma_{n,2 }k_3 \gamma_{n,3}\cdots k_\ell\gamma_{n,\ell}=e$ and 
each of the following sets $\{\gamma_{n,1} :  n\in \mathbb N \}$, $\{\gamma_{n,1} k_2\gamma_{n,2}\,: \, n\in \mathbb N \}$, $\{\gamma_{n,1}k_2\gamma_{n,2} k_3\gamma_{n,3}\,: \, n\in \mathbb N\}, \ldots, \{\gamma_{n,1}k_2\cdots 
\gamma_{n,\ell-2} k_{\ell-1} \gamma_{n,\ell-1} : n\in \mathbb N\}$ converges to infinity with respect to $\mathcal G$. Then 
$\,k_1\in B_{(\ell-1)(2C+2D)}$. 
 
\end{prop}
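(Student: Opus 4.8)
The plan is to extract from the defect inequality a chain of approximate identities relating the quasi-cocycle values along the cyclic word $k_1\gamma_{n,1}\cdots k_\ell\gamma_{n,\ell}=e$, and then to use the mixing hypothesis to kill the cross-terms that would otherwise obstruct a clean estimate on $\|q(k_1)\|$. First I would recall the basic consequence of \eqref{quasicocyclerel1}: for any $\g,\la$ one has $\|q(\g\la)-\pi_\g q(\la)-q(\g)\|\leq 2D$ (after symmetrizing, using that $q$ may be taken anti-symmetric up to bounded error; the constant $2D$ already absorbs this). Iterating along the word, I would write, for each $n$, an approximate telescoping expression
\[
0=q(e)=q(k_1\gamma_{n,1}k_2\cdots k_\ell\gamma_{n,\ell})\approx q(k_1)+\pi_{k_1}q(\gamma_{n,1})+\pi_{k_1\gamma_{n,1}}q(k_2)+\pi_{k_1\gamma_{n,1}k_2}q(\gamma_{n,2})+\cdots,
\]
where the total accumulated error is at most $(2\ell-1)$ times $D$ plus a term controlled by $C$ coming from the $\ell$ occurrences of $\|q(\gamma_{n,i})\|\leq C$. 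Grouping the bounded pieces $q(k_j)$ and the bounded pieces $\pi_{(\cdot)}q(\gamma_{n,i})$ appropriately will give an identity of the shape
\[
\Bigl\| q(k_1)+\sum_{j=2}^{\ell}\pi_{g_{n,j-1}}q(k_j)+\sum_{i=1}^{\ell}\pi_{h_{n,i}}q(\gamma_{n,i})\Bigr\|\leq (2\ell-1)D,
\]
where $g_{n,j-1}=k_1\gamma_{n,1}\cdots k_{j-1}\gamma_{n,j-1}$ and $h_{n,i}=k_1\gamma_{n,1}\cdots k_i$ are exactly the partial products whose images are assumed to go to infinity relative to $\mathcal G$ (modulo harmless left/right multiplication by the fixed elements $k_j$, which does not affect convergence to infinity).

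Next I would take a weak limit. Passing to a subnet if necessary, the vectors $\pi_{h_{n,i}}q(\gamma_{n,i})$ — which are uniformly bounded by $C$ — converge weakly, say to $\eta_i$. The key point is that mixing with respect to $\mathcal G$ forces $\eta_i=0$ for $i=1,\dots,\ell-1$: indeed $\{h_{n,i}\}_n$ goes to infinity relative to $\mathcal G$ by hypothesis, the vector $q(\gamma_{n,i})$ lies in a fixed ball $B_C$ (a bounded set), and mixing says $\langle \pi_g \xi,\zeta\rangle\to 0$ as $g\to\infty$ relative to $\mathcal G$ for fixed $\xi,\zeta$; a standard approximation argument extends this to the case where $\xi$ ranges over a bounded set, giving weak convergence to $0$. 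Similarly each $\pi_{g_{n,j-1}}q(k_j)$ converges weakly to $0$ for $j=2,\dots,\ell$, since $g_{n,j-1}$ also goes to infinity relative to $\mathcal G$ and $q(k_j)$ is a single fixed vector. The only term that does not vanish in the weak limit is $q(k_1)$ itself (it is constant in $n$) together with $\pi_{h_{n,\ell}}q(\gamma_{n,\ell})$, whose partial product $h_{n,\ell}=k_1\gamma_{n,1}\cdots k_\ell$ is \emph{not} assumed to go to infinity; this last term survives but is bounded in norm by $C$.

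Therefore, taking the weak limit of the displayed inequality and using weak lower semicontinuity of the norm, I would obtain
\[
\|q(k_1)+\eta_\ell\|\leq (2\ell-1)D,\qquad \|\eta_\ell\|\leq C,
\]
hence $\|q(k_1)\|\leq (2\ell-1)D+C$. A more careful bookkeeping of the defect — using the sharper estimate $\|q(\g\la)-\pi_\g q(\la)-q(\g)\|\le D$ for the anti-symmetric representative and counting that there are $\ell-1$ ``seams'' between consecutive blocks $k_i\gamma_{n,i}$ plus the $\ell$ contributions bounded by $C$, reorganized so that each of the $\ell-1$ vanishing partial sums picks up at most $2C+2D$ — yields the stated bound $\|q(k_1)\|\le(\ell-1)(2C+2D)$, i.e.\ $k_1\in B_{(\ell-1)(2C+2D)}$. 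The main obstacle is the combinatorial/bookkeeping step: organizing the iterated defect estimates so that the accumulated constant is exactly $(\ell-1)(2C+2D)$ rather than something larger, and making sure the grouping of terms into ``vanishing partial products'' is compatible with which partial products the hypothesis guarantees go to infinity. The analytic input (weak compactness plus the bounded-set strengthening of mixing) is routine; the delicate part is the precise alignment of the telescoping with the convergence hypotheses.
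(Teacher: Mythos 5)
Your overall architecture is the same as the paper's: expand $q$ along the relator via the defect inequality, let mixing kill the cross-terms $\pi_{g_{n,j-1}}q(k_j)$ (fixed vectors rotated by partial products tending to infinity relative to $\mathcal G$), and control what remains. But one step as written is false. You assert that mixing, which gives $\langle \pi_g\xi,\zeta\rangle\to 0$ for \emph{fixed} $\xi,\zeta$ as $g\to\infty$ relative to $\mathcal G$, extends ``by a standard approximation argument'' to $\xi$ ranging over a norm-bounded set, and you use this to conclude that $\pi_{h_{n,i}}q(\gamma_{n,i})\to 0$ weakly for $i\le \ell-1$. The approximation argument works for totally bounded families of vectors (finite $\ve$-nets), not for norm-bounded families in an infinite-dimensional $\mathcal H$: taking $\xi_n=\pi_{g_n}^{-1}\zeta$ gives $\|\xi_n\|=\|\zeta\|$ yet $\langle\pi_{g_n}\xi_n,\zeta\rangle=\|\zeta\|^2$ for all $n$. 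There is no reason for $\{q(\gamma):\gamma\in B_C\}$ to be precompact, so these terms need not vanish weakly.

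The repair is to give up on making those terms vanish and instead bound each of them by $C$ via Cauchy--Schwarz; this is exactly what the paper does, and it is where the $2(\ell-1)C$ part of the constant comes from --- those terms are not meant to disappear. Concretely, the paper pairs everything against the single fixed vector $k=q(k_1^{-1})$ and proves by induction the scalar inequality $|\langle q(x_1b_2x_2\cdots b_mx_m),k\rangle|\le (m-1)(2C+2D)\|k\|+\sum_i|\langle\pi_{x_1b_2\cdots x_{i-1}}(q(b_i)),k\rangle|$; applying it to $k_1^{-1}=\gamma_{n,1}k_2\cdots k_\ell\gamma_{n,\ell}$ (a $(2\ell-1)$-letter word, hence one fewer defect application than your $2\ell$-letter relator), mixing kills only the explicit cross-terms, and anti-symmetry converts $\|q(k_1^{-1})\|$ into $\|q(k_1)\|$. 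Note also that your deferred ``careful bookkeeping'' is not cosmetic: with the faulty step removed, your telescoping yields roughly $(2\ell-1)D+\ell C$, which for small $C$ exceeds $(\ell-1)(2C+2D)$, so the reorganization around $q(k_1^{-1})$ and the use of anti-symmetry are actually needed to reach the stated constant.
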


\begin{proof}First we show that for all $x_1,x_2,\ldots, x_m\in B_C$, $b_2,b_3,\ldots, b_m\in \G$, and $k\in \Cal H$ we have 
\begin{equation}\label{2525}|\langle q(x_1b_2x_2\cdots x_{m-1}b_mx_m), k\rangle| \leq  (m-1)(2C+2D)\|k\|+ \sum^m_{i=2} |\langle\pi_{x_1b_2x_2\cdots b_{i-1}x_{i-1}}(q(b_i)),k\rangle|.
\end{equation}   
  
  To see this we argue by induction on $m$. When $m=2$, using the quasi-cocycle relation (\ref{quasicocyclerel1}) and the Cauchy-Schwarz inequality we have that  \begin{equation*}\begin{split} 
|\langle q(x_1b_2x_2), k\rangle| &\leq D\|k\|+ |\langle q(x_1)+\pi_{x_1}(q(b_2x_2)) ,k\rangle |\\
 &\leq (D +\|q(x_1)\|)\|k\|+ |\langle \pi_{x_1}(q(b_2x_2)) ,k\rangle |\\ 
&\leq (C + 2D)\|k\|+ |\langle \pi_{x_1}(q(b_2)) ,k\rangle |+ |\langle \pi_{x_1b_2}(q(x_2)) ,k\rangle |\\
&\leq (2C + 2D)\nor{k} + |\langle \pi_{x_1}(q(b_2)) ,k\rangle |.
\end{split}\end{equation*}
 
 \noindent For the inductive step assume (\ref{2525}) holds for $2,\dotsc,m$; thus, we have that 

\begin{multline}\label{2527} |\langle q(x_1b_2\cdots b_{m}x_{m}b_{m+1}x_{m+1} ), k\rangle| \leq |\langle \pi_{x_1b_2}q(x_2b_3\cdots b_{m}x_{m}b_{m+1}x_{m+1} ), k\rangle| + (C + 2D)\nor{k}+
|\langle \pi_{x_1}(q(b_2)),k\rangle |\\
\leq  m(2C+2D)\|k\|+  \sum^m_{i=2} |\langle\pi_{x_1b_2x_2\cdots b_{i-1}x_{i-1}}(q(b_i)),k\rangle|,
\end{multline}   
and we are done with the claim. $\blacksquare$\\

To prove our statement note that since $k^{-1}_1=\g_{n,1}k_2\g_{n,2} k_3\g_{n,3} \cdots k_\ell\g_{n,\ell} $, then (\ref{2525}) implies \begin{multline}\label{1'}
\|q(k^{-1}_1)\|^2 =\langle q(\g_{n,1}k_2\g_{n,2} k_3\g_{n,3} \cdots k_\ell\g_{n,\ell} ),q(k^{-1}_1)\rangle \\
\leq  (\ell-1)(2C+2D)\|q(k^{-1}_1)\|+\sum^\ell_{i=2} |\langle\pi_{\g_{n,1}k_2\g_{n,2}\cdots k_{i-1}\g_{n,i-1}}(q(k_i)),q(k^{-1}_1)\rangle|.
 \end{multline}
 
\noindent Since each of the sets $\{\g_{n,1} : n\in \mathbb N \}$, $\{\g_{n,1}k_2\g_{n,2}: n\in \mathbb N\}, \ldots,  \{\g_{n,1}k_{2}\g_{n,2}\cdots k_{\ell-1} \g_{n,\ell-1}: n\in \mathbb N\}$ converges to infinity relative to $\mathcal G$ and $\pi$ is a mixing relative to $\mathcal G$, taking the limit as $n\ra \infty$ in (\ref{1'}) we get  \[\|q(k^{-1}_1)\|^2 \leq  (\ell-1)(2C+ 2D)\|q(k^{-1}_1)\|.\]  Hence, by anti-symmetry we have \[\nor{q(k_1)} =\|q(k^{-1}_1)\| \leq    (\ell-1)(2C+2D).\]

\end{proof}

The next result is the main application of Proposition \ref{mixingmalnormal}.

\begin{cor}\label{quasi-control} Under the same assumptions and notations as in Proposition \ref{mixingmalnormal}, for every finite set $\,F\subset \G\setminus B_{2C+2D}$ there exists a subset $\,K\subset B_C$ which is small with 
respect to the family $\mathcal G$ such that \[\,F \left (B_C\setminus K\right ) \cap  \left (B_C\setminus K\right ) F=\emptyset.\]
Moreover, if the quasi-cocycle $q$ is bounded on each group in $\mathcal G$, then for every finite set $\,F\subset \G\setminus B_{6C+6D}$ one can find a subset $\,K\subset B_C$ which is small 
with respect to the family $\mathcal G$ and satisfies \[\,F \left (B_C\setminus  (K^2\cup K)\right ) F\left (B_C\setminus  (K^2\cup K)\right )\cap  \left (B_C\setminus (K^2\cup K)\right ) F\left (B_C\setminus  (K^2\cup K)\right ) F=\emptyset.\]
\end{cor}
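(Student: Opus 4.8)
The plan is to deduce both statements directly from Proposition~\ref{mixingmalnormal}, using $\ell=2$ for the first and $\ell=4$ for the second. As in the proof of that proposition we may assume $q$ is anti-symmetric, so that every ball $B_C$ is symmetric. I will use that a set $S\subseteq\G$ is small with respect to $\mathcal G$ exactly when $S\subseteq\bigcup_{\Sigma\in\mathcal K}E\Sigma E$ for some finite $E\subseteq\G$ and finite $\mathcal K\subseteq\mathcal G$, that a sequence converges to infinity with respect to $\mathcal G$ exactly when it eventually leaves every small set, that the class of small sets is stable under finite unions, finite enlargements, inverses and fixed two-sided translations, and (as $\G$ and $\mathcal G$ are countable) that the small subsets of $B_C$ can be arranged into an increasing exhaustion $K_1\subseteq K_2\subseteq\cdots$ by small sets.

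For the first statement I would take $K:=\bigcup_{f,f'\in F}\{\g\in B_C : f\g f'^{-1}\in B_C\}$. Then $F(B_C\setminus K)\cap(B_C\setminus K)F=\emptyset$ is immediate, since $f_1\g_1=\g_2 f_2$ with $\g_1,\g_2\in B_C\setminus K$ and $f_1,f_2\in F$ would force $f_1\g_1 f_2^{-1}=\g_2\in B_C$, i.e.\ $\g_1$ lies in the $(f_1,f_2)$-piece of $K$. That each such piece is small --- hence $K$ is small --- follows from Proposition~\ref{mixingmalnormal} with $\ell=2$: if a piece $\{\g\in B_C : f\g f'^{-1}\in B_C\}$ were not small, choose $\g_n$ in it converging to infinity and apply the proposition to $f\,\g_n\,(f'^{-1})\,(f\g_n f'^{-1})^{-1}=e$ (so $k_1=f$, $k_2=f'^{-1}$, $\g_{n,1}=\g_n\in B_C$, $\g_{n,2}=(f\g_n f'^{-1})^{-1}\in B_C$, with the single required partial product $\{\g_{n,1}\}$ escaping) to conclude $f\in B_{2C+2D}$, against $F\subseteq\G\setminus B_{2C+2D}$.

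For the second statement I would argue by contradiction. If no small $K$ works, then applying the failure along the exhaustion $(K_j)$ and passing to a subsequence that fixes the four $F$-letters yields, for every $j$, elements $b_i^{(j)}\in B_C\setminus(K_j^2\cup K_j)$ with $f_1 b_1^{(j)} f_2 b_2^{(j)} f_4^{-1}(b_4^{(j)})^{-1} f_3^{-1}(b_3^{(j)})^{-1}=e$. This is a relation of the form in Proposition~\ref{mixingmalnormal} with $\ell=4$, $k_1=f_1$, and all $\g_{j,i}\in B_C$, whose three relevant partial products are $b_1^{(j)}$, $b_1^{(j)}f_2 b_2^{(j)}$ and $b_1^{(j)}f_2 b_2^{(j)}f_4^{-1}(b_4^{(j)})^{-1}$. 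If all three converge to infinity, the proposition forces $f_1\in B_{3(2C+2D)}=B_{6C+6D}$, contradicting $F\subseteq\G\setminus B_{6C+6D}$; so everything reduces to showing these three sequences escape. The first does since $b_1^{(j)}\notin K_j$ and the $K_j$ exhaust all small subsets of $B_C$, and the third does since rearranging the relation gives $b_1^{(j)}f_2 b_2^{(j)}f_4^{-1}(b_4^{(j)})^{-1}=f_1^{-1}b_3^{(j)}f_3$, a fixed two-sided translate of $b_3^{(j)}$.

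The real work --- the step I expect to be the main obstacle --- is the middle partial product $b_1^{(j)}f_2 b_2^{(j)}$: this is exactly where the hypothesis that $q$ be bounded on each $\Sigma\in\mathcal G$ is needed, and why one must allow the constant $6C+6D$ rather than $2C+2D$. If this product failed to escape, then after a further subsequence one may write $b_1^{(j)}f_2 b_2^{(j)}=a\,\s_j\,c$ with $a,c$ fixed and $\s_j$ in a fixed $\Sigma\in\mathcal G$, so that boundedness of $q$ on $\Sigma$ confines the $\s_j^{\pm1}$ to a ball of radius controlled by $C$ and $D$; one then applies Proposition~\ref{mixingmalnormal} again --- with $\ell=2$ to $f_2\,b_2^{(j)}\,(c^{-1}\s^{-1}a^{-1})\,b_1^{(j)}=e$ if the $\s_j$ have a constant subsequence, and with $\ell=3$ to $f_2\,b_2^{(j)}\,c^{-1}\,\s_j^{-1}\,a^{-1}\,b_1^{(j)}=e$ otherwise, checking that the resulting partial products (which turn out to be fixed translates of $b_1^{(j)}$ or $b_2^{(j)}$) escape --- to force $f_2\in B_{6C+6D}$, again a contradiction. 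Making this precise will require choosing $K$ carefully in terms of the data of Proposition~\ref{mixingmalnormal}, so that excluding $K^2\cup K$ (not merely $K$) removes exactly the near-degenerate ball elements which can spoil the middle partial product while keeping the constants within $6C+6D$; by contrast, the two outer partial products and the top-level appeal to Proposition~\ref{mixingmalnormal} are routine.
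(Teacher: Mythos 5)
Your first part is correct and is essentially the paper's own argument: for each pair $(f,f')\in F\times F$ the set $\{\gamma\in B_C : f\gamma f'^{-1}\in B_C\}$ must be small, since otherwise a sequence from it escaping to infinity feeds into Proposition~\ref{mixingmalnormal} with $\ell=2$ and forces $f\in B_{2C+2D}$; taking the union over $F\times F$ gives $K$. For the second part, your skeleton also matches the paper: the $\ell=4$ relation $f_1b_1f_2b_2f_4^{-1}b_4^{-1}f_3^{-1}b_3^{-1}=e$, the three partial products, the first handled because $b_1^{(j)}$ avoids an exhausting sequence of small sets, the third by rewriting it as $f_1^{-1}b_3^{(j)}f_3$. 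You have also correctly isolated the crux (the middle partial product) and the role of the boundedness of $q$ on the subgroups.

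But your resolution of the crux does not close, and the way it diverges from the paper is itself a warning sign: your argument never uses the $K^2$ part of the exclusion, so if it worked it would prove the stronger statement with $K$ alone. Concretely, when $b_1^{(j)}f_2b_2^{(j)}=m_1a_jm_2$ with $a_j\in\Sigma$ non-constant, your application of Proposition~\ref{mixingmalnormal} with $\ell=3$ to $f_2\,b_2^{(j)}\,m_2^{-1}\,a_j^{-1}\,m_1^{-1}\,b_1^{(j)}=e$ must treat $a_j^{-1}$ as one of the ball elements, and these lie only in $B_{C_\Sigma}$ with $C_\Sigma=\sup_{\sigma\in\Sigma}\nor{q(\sigma)}$ finite but not controlled by $C$ and $D$; the proposition then gives $f_2\in B_{2(2\max(C,C_\Sigma)+2D)}$, which need not contradict $f_2\notin B_{6C+6D}$. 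You flag exactly this as ``the main obstacle'' and defer it, so the decisive step is missing. The paper's route is different: with a fixed quadruple from $F$ and a putative intersecting sequence, when $\{\gamma_{n,1}k_2\gamma_{n,2}\}$ is small one splits on whether $\{\gamma_{n,2}\}$ escapes. If it escapes, the $\ell=3$ application yields the contradiction (the paper asserts $k_2\in B_{4C+4D}$ here, which also implicitly absorbs $C_\Sigma$). If it does not escape, no contradiction is sought: instead $\gamma_{n,1}=m_1a_nm_2\,\gamma_{n,2}^{-1}k_2^{-1}$ lies in a product of two small sets, hence in $K^2$ for a suitable small $K$ --- this is precisely why the statement excludes $K^2\cup K$ rather than $K$. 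Your diagonalization over an exhaustion forces $\{b_2^{(j)}\}$ to escape and thereby hides this sub-case rather than resolving it; to repair the proof you should drop the exhaustion, fix the quadruple, and let the small-$\{\gamma_{n,2}\}$ sub-case contribute the $K^2$ rather than attempting a contradiction there.
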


\begin{proof} Let $k_1, k_2\in F\subset \Gamma \setminus B_{2C+2D}$ be fixed elements and let $(\gamma_{n,1})_n$, $(\gamma_{n,2})_n$ be sequences in $B_C$ such that 
$k_1B_C\cap B_C k_2=\{k_1\gamma_{n,1}=\gamma_{n,2}k_2 \,:\, n\in \mathbb N\}$. In particular we have $k_1\gamma_{n,1}k^{-1}_2\gamma^{-1}_{n,2}=e$ for all $n$ and by the first part the set 
$\{\gamma^{-1}_{n,1}\,:\,n\in \mathbb N \}$ is small over $\mathcal G$ and so is $K_{k_1,k_2}=\{\gamma_{n,1}\,:\,n\in \mathbb N \}$. This entails that $k_1(B_C\setminus K_{k_1,k_2})\cap (B_C\setminus K_{k_1,k_2}) k_2=\emptyset$ 
and hence if we let $K=\cup_{k_1,k_2\in F}K_{k_1,k_2}$ we see that $K$ is small with respect to $\mathcal G$ and $F(B_C\setminus K)\cap (B_C\setminus K)F =\emptyset$. 

For the second part let $k_1, k_2, k_3, k_4\in F\subset \Gamma \setminus B_{6C+6D}$ and assume that there exist infinite sequences $(\gamma_{n,i})_n\in B_C$ with $1\leq i\leq 4$ such that $k_1B_Ck_2B_C\cap B_Ck_3B_Ck_4:= \{k_1\g_{n,1}k_2\g_{n,2}=\g_{n,3}k_3\g_{n,4}k_4 \,:\,n\in \mathbb N\}$.  
This further implies that $k_1\g_{n,1}k_2\g_{n,2}k_4^{-1}\g_{n,4}^{-1}k_3^{-1}\g_{n,3}=e$, for all $n\in \mathbb N$. Then from the previous proposition it follows that at least one of the sets 
$\{\g_{n,1}\,:\,n\in \mathbb N\}$, $\{\g_{n,1}k_2\g_{n,2}\,:\,n\in \mathbb N\}$, or $\{\g_{n,1}k_2\g_{n,2}k^{-1}_4\g^{-1}_{n,4}\,:\,n\in \mathbb N\}$ must be small with respect to $\mathcal G$. 
If we have either the set  $\{\g_{n,1}\,:\,n\in \mathbb N\}$ or the set $\{\g_{n,1}k_2\g_{n,2}k^{-1}_4\g^{-1}_{n,4}\,:\,n\in \mathbb N\}=\{k_1^{-1}\g^{-1}_{n,3}k_3\,:\,n\in \mathbb N\}$ (and hence $\{\g_{n,3}\,:\,n\in \mathbb N\}$!) 
is small with respect to $\mathcal G$ then the conclusion follows imediately. So it remains to analyze the case when $\{\g_{n,1}\,:\,n\in \mathbb N\}$ converges to infinity with respect to $\Cal G$  while the set 
$\{\g_{n,1}k_2\g_{n,2}\,:\,n\in \mathbb N\}$ is small with respect to $\mathcal G$. Here we only need to investigate the case when there exist: an infinite sequence of positive integers $(r_n)_n$ such that 
$\{\g_{r_n,1}\,:\,n\in \mathbb N\}$ converges to infinity with respect to 
$\Cal G$; elements $m_1, m_2\in \G$ and a sequence of elements $a_n\in \Sigma$ for some $\Sigma \in \mathcal G$ such that $\g_{r_n,1} k_2\g_{r_n,2}=m_1 a_n m_2$, for all $n\in \mathbb N$. Note that the latter equation can be 
rewritten as  \begin{equation}\label{71} k_2\g_{r_n,2}m^{-1}_2a_n^{-1}m_1^{-1}\g_{r_n,1}=e,\text{ for all }n\in \mathbb N.\end{equation} If the set $\{\g_{r_n,2} \,:\,n\in \mathbb N\}$ would converge to infinity with respect to 
$\Cal G$ then, since so is $\{k_2\g_{r_n,2}m^{-1}_2a_n^{-1}\,:\,n\in \mathbb N\}=\{\g^{-1}_{r_n,1}m_1\,:\,n\in \mathbb N\}$, we would have by the previous proposition that $k_2\in B_{4C+4D}$ which is a contradiction. 
Thus $\{\g_{r_n,2} \,:\,n\in \mathbb N\}$ is small with respect to $\Cal G$ and from equation (\ref{71}) it follows that there exists a set $K$ which is small with respect to $\Cal G$ such that 
$\{\g_{r_n,1}\,:\,n\in \mathbb N\}\subset K^2$; this gives the desired conclusion.  
\end{proof}

In the case of mixing representations we get the following sharper result.

\begin{cor}\label{mixingmalnormal1}Let $\,\G$ be a countable, discrete group, and let $\pi:\G\ra \mathcal O(\mathcal H)$ be a mixing orthogonal representation. Assume $q:\G\ra \mathcal H$ is a quasi-cocycle and for every 
$C\geq 0$ we denote by $B _C :=\{\g\in \G \, : \, \|q(\g)\|\leq C \}$.  Let $\,C\geq 0$ and let  $\,k_1,k_2,\ldots, k_\ell \in \G$ be elements such that, for each $1\leq i\leq \ell$ there exists an infinite sequence 
$(\gamma_{n,i})_n\in B_C$ (without repetitions) such that we have $k_1 \gamma_{n,1}k_2\gamma_{n,2 }k_3 \gamma_{n,3}\cdots k_\ell\gamma_{n,\ell}=e$, for all $n$. Then there exists $1\leq j\leq n$ such that $\,k_j\in B_{\ell(C+D)}$. 
In other words, for every finite set $\,F\subset \G\setminus B_{2\ell(C+D)}$ there exists a finite subset $\,K\subset B_C$ such that 
\begin{equation}\label{72} e\notin [F \left (B_C\setminus K\right )]^\ell:=\underbrace{[F \left (B_C\setminus K\right )][F \left (B_C\setminus K\right )]\cdots [F \left (B_C\setminus K\right )]}_{\ell \text{-times}}.\end{equation}
In particular,  for every finite set $\,F\subset \G\setminus B_{2\ell(C+D)}$ and every $1\leq \kappa \leq \ell$ there exists a finite subset $\,K\subset B_C$ such that  \begin{equation*}[F \left (B_C\setminus K\right )]^\kappa \cap  [\left (B_C\setminus K\right ) F]^{\ell-\kappa}=\emptyset.\end{equation*}
\end{cor}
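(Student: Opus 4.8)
The plan is to isolate the purely group‑theoretic "arithmetic'' statement of the corollary — that some $k_j$ lies in $B_{\ell(C+D)}$ — and then read off the two displayed reformulations from it by soft arguments. I would deduce $(\ref{72})$ by contradiction: if it failed for some finite $F\subseteq\G\setminus B_{2\ell(C+D)}$, then, exhausting $B_C$ by an increasing family of finite sets $(K_m)$ and using finiteness of $F^{\ell}$, one pigeonholes a single tuple $(k_1,\dots,k_\ell)\in F^{\ell}$ together with elements $\g_{n,i}\in B_C\setminus K_{m_n}$ (with $m_n\to\infty$) such that $k_1\g_{n,1}\cdots k_\ell\g_{n,\ell}=e$ for all $n$; since $m_n\to\infty$ the $\g_{n,i}$ leave every finite set, so after passing to a subsequence each $(\g_{n,i})_n$ is injective. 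This is exactly the hypothesis of the arithmetic statement, so $k_j\in B_{\ell(C+D)}\subseteq B_{2\ell(C+D)}$ for some $j$, contradicting $F\cap B_{2\ell(C+D)}=\emptyset$. (Note only the weaker conclusion $k_j\in B_{2\ell(C+D)}$ is used, so the precise value of the constant is immaterial for the applications.) The "in particular'' clause then follows formally: replacing $F$ and $K$ by $F\cup F^{-1}$ and $K\cup K^{-1}$ we may take both symmetric — anti‑symmetry of $q$ makes $B_C$, and hence $B_{2\ell(C+D)}$, symmetric — and if $x\in[F(B_C\setminus K)]^{\kappa}\cap[(B_C\setminus K)F]^{\ell-\kappa}$ then, taking inverses, $x^{-1}\in[F(B_C\setminus K)]^{\ell-\kappa}$, so $e=x^{-1}x\in[F(B_C\setminus K)]^{\ell}$, contradicting $(\ref{72})$.

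To prove the arithmetic statement I would induct on $\ell$; we may assume $q$ anti‑symmetric. In the mixing setting "converging to infinity'' means nothing but leaving every finite set, so every repetition‑free sequence tends to infinity and mixing says $\langle\pi_{g_n}\xi,\eta\rangle\to 0$ whenever $g_n\to\infty$. After a diagonal extraction I may assume that each of the finitely many partial products $q_n^{(t)}$ of the cyclic word $k_1\g_{n,1}\cdots k_\ell\g_{n,\ell}$ is either eventually constant or tends to infinity; since two partial products straddling one variable cannot both be eventually constant, one checks that $\g_{n,1}$ and $\g_{n,1}k_2\g_{n,2}$, as well as $\g_{n,1}k_2\cdots k_{\ell-1}\g_{n,\ell-1}$ and the one before it, always tend to infinity. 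If in fact \emph{all} the interior partial products $\g_{n,1},\ \g_{n,1}k_2\g_{n,2},\ \dots,\ \g_{n,1}k_2\cdots k_{\ell-1}\g_{n,\ell-1}$ tend to infinity, then, exactly as in the proof of Proposition~\ref{mixingmalnormal}, I expand $q(k_1^{-1})=q(\g_{n,1}k_2\g_{n,2}\cdots k_\ell\g_{n,\ell})$ into atomic terms via the quasi‑cocycle relation and pair it against the fixed vector $q(k_1^{-1})$: each variable atom contributes at most $C\|q(k_1^{-1})\|$, the defect contributes $O(\ell D)\|q(k_1^{-1})\|$, and each remaining atom has the form $\langle\pi_{(\text{interior product})}(q(k_i)),q(k_1^{-1})\rangle$ and tends to $0$ by mixing; letting $n\to\infty$ gives $\|q(k_1^{-1})\|^{2}\le(\ell C+2(\ell-1)D)\|q(k_1^{-1})\|$, hence $k_1\in B_{\ell(C+D)}$ after a harmless adjustment of the constant invisible to $(\ref{72})$. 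Since for $\ell\le 3$ the interior partial products always tend to infinity, this settles the base of the induction.

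The remaining case — which I expect to be the crux — is that some interior partial product $q_n^{(2a)}$ with $2\le a\le\ell-2$ is eventually constant, say equal to $g$. Then the relation splits into a head $k_1\g_{n,1}\cdots k_a\g_{n,a}=g$ and a tail $gk_{a+1}\g_{n,a+1}\cdots k_\ell\g_{n,\ell}=e$; absorbing $g$ into a new constant, each rewrites as a standard relation of the same type but of strictly smaller length, and after a cyclic rotation its leading constant is one of the original $k_j$'s, so the inductive hypothesis applies to both halves. The obstacle is that these shorter relations carry an extra auxiliary constant — of the shape $g^{\pm1}k_j^{\pm1}$ — which the inductive hypothesis may "blame'' in place of an honest $k_j$; since $q(k_j)=q(g)+\pi_g(q(g^{-1}k_j))\pm D$ relates the smallness of such an auxiliary constant to a comparison between $\|q(k_j)\|$ and $\|q(g)\|$, one can still feed the conclusions of the two halves back into one another to force an honest $k_j$ into $B_{\ell(C+D)}$, but carrying this out cleanly seems to require strengthening the inductive statement so that it keeps simultaneous control of the leading constant and of the split value $g$ at each stage. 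I view making this induction close correctly, and pinning the numerical constant (a more hands‑off bookkeeping only yields $\ell C+2(\ell-1)D$, which is still $\le 2\ell(C+D)$), as the main technical points; everything else is bookkeeping.
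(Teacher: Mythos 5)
Your soft reductions are fine and agree with what the paper leaves implicit: the pigeonhole/exhaustion argument deducing \eqref{72} from the ``arithmetic'' statement, and the inversion trick (after symmetrizing $F$ and using anti-symmetry of $q$ to make $B_C$ symmetric) for the final intersection claim. Your treatment of the case where all interior partial products escape to infinity is also correct --- that is just Proposition \ref{mixingmalnormal} applied verbatim, and you are right that the exact constant is immaterial. But the case you flag as ``the crux'' is a genuine gap: you do not close the induction when some interior partial product $\g_{n,1}k_2\cdots k_a\g_{n,a}$ stabilizes, and the route you sketch (keeping both the head and the tail, and trying to play the bounds $\|q(g^{\pm1}k_j^{\pm1})\|$ off against $\|q(k_j)\|$ and $\|q(g)\|$) is not carried out and is harder than necessary.

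The paper's resolution is a single clean observation that you almost name but do not exploit: Proposition \ref{mixingmalnormal} bounds specifically the \emph{first} constant $k_1$ of the word, not merely ``some'' constant. So the inductive claim is strengthened to: one can always extract indices $s<t$, a subsequence $(r_n)$, and a single modified element $k'_t$ such that $k_s\g_{r_n,s}\cdots k_{t-1}\g_{r_n,t-1}k'_t\g_{r_n,t}=e$ with all the intermediate products infinite --- i.e.\ only the \emph{last} constant of the extracted sub-word is allowed to be corrupted. In the inductive step one takes the smallest $d$ at which the partial product stabilizes at $c$, keeps only the head relation $k_1\g_{a_n,1}\cdots k_d\g_{a_n,d}=c$, discards the tail entirely, and rotates cyclically to $k_2\g_{a_n,2}\cdots k_d\g_{a_n,d}\,(c^{-1}k_1)\,\g_{a_n,1}=e$, a word of length $d\le\ell-1$ whose corrupted constant $c^{-1}k_1$ sits in the final slot. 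The induction hypothesis then produces a sub-word whose first constant is one of the honest $k_2,\dots,k_d$, and Proposition \ref{mixingmalnormal} places that honest $k_j$ in $B_{2(t-s)(C+D)}\subseteq B_{2\ell(C+D)}$. This is exactly the ``strengthening of the inductive statement'' you suspected was needed; without it (or some substitute) your proof of the central claim is incomplete.
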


\begin{proof}First we claim that there exist two positive integers $1\leq s< t\leq \ell$, an infinite sequence $(r_n)_n$ of positive integers, and an element $k'_t \in \G$  such that  for all $n$ we have 
\begin{eqnarray} \label{inf1}&& \left |\{k_s\g_{r_n,s} \,:\,n\in \mathbb N \}\right |=\left |\{k_s\g_{r_n,s}k_{s+1}\g_{r_n,s+1} \,:\,n\in \mathbb N \}\right |=\\ 
\nonumber&&=\cdots =\left |\{k_s\g_{r_n,s}k_{s+1}\g_{r_n,s+1}\cdots k_{t-1}\g_{r_n, t-1} \,:\,n\in \mathbb N \}\right |=\infty;\\
\label{rel1}&&k_s\g_{r_n,s}k_{s+1}\g_{r_n,s+1}\cdots k_{t-1}\g_{r_n, t-1}k'_t \g_{r_n,t}=e.\end{eqnarray} 
Next we observe that if this is the case then applying Proposition \ref{mixingmalnormal} above it follows that $k_s\in B_{2(t-s)(C+D)}\subseteq B_{2\ell(C+D)}$. The remaining part of the statement follows easily from this.

So it only remains to show  (\ref{inf1}) and (\ref{rel1}). We proceed by induction on $\ell$. When $\ell=2$ the statement is trivial. So to prove the inductive step assume the statement holds for all $2\leq m\leq \ell-1$ and 
we will show it for $\ell$. Notice that since by assumption we have $k_1 \gamma_{n,1}k_2\gamma_{n,2 }k_3 \gamma_{n,3}\cdots k_\ell\gamma_{n,\ell}=e$, for all $n$, then there exists a smallest integer $2\leq d_1\leq \ell$ such 
that the sets $\{k_1\g_{n,1} \,:\,n\in \mathbb N \}$, $\{k_1\g_{n,1}k_{2}\g_{n,2} \,:\,n\in \mathbb N \}$, $\ldots$ ,$\{k_1\g_{n,1}k_{2}\g_{n,2}\cdots k_{d_1-1}\g_{n, d_1-1} \,:\,n\in \mathbb N \}$ are infinite while 
$\{k_1\g_{n,1}k_{2}\g_{n,2}\cdots k_{d}\g_{n, d_1} \,:\,n\in \mathbb N \}$ is finite. If $d_1=\ell$ then (\ref{inf1}) and (\ref{rel1}) follow trivially. 

If $d_1\leq \ell-1$ there exists an infinite sequence $(a_n)_n$ of integers and $c\in \G$ such that  $k_1\g_{a_n,1}k_{2}\g_{a_n,2}\cdots k_{d_1}\g_{a_n, d_1}=c$, for all $n$, and hence we have that 
$k_{2}\g_{a_n,2}\cdots k_{d_1}\g_{a_n, d_1} k''_1\g_{a_n,1}=e$, for all $n$, where we denoted by $k''_1=c^{-1}k_1$. In this case  (\ref{inf1}) and (\ref{rel1}) follow from the induction assumption.
 \end{proof}
 
 We note that the previous corollary can be easily generalized to the case of quasi-cocycles into representations $\pi$ which are mixing with relatively to a family of subgroups $\Cal G$. The statement is virtually the same 
 with the mention that, rather than a finite set, in this case $K$ will be a set which is small with respect to $\Cal G$ and equation (\ref{72}) will hold with $K$ replaced by $\cup^{[\ell/2]+1}_{j=1} K^j$. The proof is also very 
 similar to the one presented above and we leave it to the reader.
\begin{remark} The above results are an approximate translation to the quasi-cocycle perspective of Proposition 2.8 in \cite{DGO} which  states that if $\La< \G$ hyperbolically embedded, then $\La$ is \emph{almost malnormal} 
in $\G$ -- i.e., $\abs{\La \cap \g \La \g^{-1}}<\infty$ whenever $\g\not\in \La$.  Indeed Theorem 4.2 in \cite{HO} seems to suggest (though it is not explicitly proven) that starting from 
$0\in QZ^1_{as}(\La, \la_{\La}^{\oplus\infty})$, one can construct a quasi-cocycle $q\in QZ^1_{as}(\G,\la_{\G}^{\oplus\infty})$ such that $q|_\La \equiv 0$ and which is proper with respect to $\La$. By essentially the same 
techniques as above this, with some work, ought to imply the almost malnormality of $\La$.
\end{remark}

We make one final remark before passing to the main results of the paper. The following result is well known in particular cases, such as when the group has positive first $\ell^2$-Betti number. 

\begin{prop}Let $\G$ be a countable group which admit a non-proper, unbounded quasi-cocycle into a mixing orthogonal representation of $\G$. Then the following hold 
\begin{enumerate}\item $\G$ does not admit an infinite abelian normal subgroup; 
\item $\G\neq \G_1 \vee \G_2$ for $\G_i$ infinite such that $[\G_1,\G_2]=e$ and $\G_1$ is normal in $\G$.
\end{enumerate}  \end{prop}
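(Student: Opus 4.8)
\textit{Proof proposal.} The plan is to reduce both parts to Proposition \ref{mixingmalnormal}, invoked only with $\ell=2$ and the trivial family of subgroups, in which form (for an antisymmetric quasi-cocycle $q$ of defect $D$) it states: if $k_1\g_{n,1}k_2\g_{n,2}=e$ for all $n$, with $\g_{n,1},\g_{n,2}\in B_C$ and the $\g_{n,1}$ pairwise distinct, then $k_1\in B_{2C+2D}$. By the observation of Thom \cite{Tho} we may assume $q$ is antisymmetric; non-properness gives a constant $C_0$ with $B_{C_0}$ infinite, and $\sup_{\g}\|q(\g)\|=\infty$. Thus in each situation the task is to manufacture a relation of the above form in which $k_1$ has arbitrarily large $q$-norm while all $\g_{n,i}$ have uniformly bounded $q$-norm, which is a contradiction.

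For (1), assume $A\vartriangleleft\G$ is infinite abelian and distinguish three (exhaustive) cases. (a) $q|_A$ bounded, say $A\subseteq B_{C_1}$: pick $\g$ with $\|q(\g)\|>2C_1+2D$ and pairwise distinct $g_n\in A$; by normality $\g g_n^{-1}\g^{-1}\in A\subseteq B_{C_1}$, and $\g g_n\cdot\g^{-1}\cdot(\g g_n^{-1}\g^{-1})=e$, so Proposition \ref{mixingmalnormal} puts $\g\in B_{2C_1+2D}$, a contradiction. (b) $q|_A$ unbounded and $A\cap B_{C''}$ infinite for some $C''$: pick $a\in A$ with $\|q(a)\|>2C''+2D$; since $A$ is abelian, $a$ commutes with infinitely many distinct $c_n\in A\cap B_{C''}$, and $a c_n a^{-1}c_n^{-1}=e$ gives the same contradiction. (c) $q|_A$ unbounded and proper on $A$: pick distinct $s_n\in B_{C_0}$ and $a\in A$ with $\|q(a)\|>4C_0+4D$; normality gives $s_n a s_n^{-1}\in A$, and the estimate $\|q(s_n a s_n^{-1})\|\le 2C_0+\|q(a)\|+2D$ confines $\{s_n a s_n^{-1}:n\}$ to a single ball, hence by properness it is finite; passing to a subsequence on which it is constant, the elements $t_n:=s_1^{-1}s_{n+1}$ are distinct, lie in $C_\G(a)$, and have $\|q(t_n)\|\le 2C_0+D$, so $a t_n a^{-1}t_n^{-1}=e$ forces $a\in B_{4C_0+4D}$, a contradiction.

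For (2), assume $\G=\G_1\vee\G_2$ with $\G_i$ infinite, $[\G_1,\G_2]=e$, $\G_1\vartriangleleft\G$; then $\G_1\vee\G_2=\G_1\G_2$ and $\G_2\vartriangleleft\G$ as well, since $(g_1g_2)h(g_1g_2)^{-1}=g_2hg_2^{-1}\in\G_2$ for $g_1g_2\in\G_1\G_2$ and $h\in\G_2$. If $q|_{\G_i}$ is bounded for some $i$, case (a) applies verbatim with the infinite normal subgroup $\G_i$ in place of $A$. Otherwise both $q|_{\G_i}$ are unbounded; if some $\G_i\cap B_C$, say $\G_1\cap B_C$, is infinite, pick $h\in\G_2$ with $\|q(h)\|>2C+2D$: as $\G_1$ and $\G_2$ commute elementwise, $h$ commutes with infinitely many distinct elements of $\G_1\cap B_C$, and we conclude as in case (b). In the remaining case $q$ is proper on $\G_1$, and case (c) applies with a large element $g\in\G_1$ in place of $a$, using normality of $\G_1$ to keep $s_n g s_n^{-1}\in\G_1$ and ending with $g t_n g^{-1}t_n^{-1}=e$ for suitable distinct $t_n\in C_\G(g)$ of bounded $q$-norm.

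I expect the delicate point --- and the only place the non-properness hypothesis is genuinely used --- to be case (c): when $q$ is unbounded on the relevant subgroup but proper on it, conjugating a fixed large element by bounded elements cannot produce a bounded sequence, so one must instead conjugate the infinite bounded set $B_{C_0}$ supplied by non-properness, and use a pigeonhole/diagonalization argument to land inside a single centralizer. Everything else --- the triangle-inequality estimates along words, the diagonalization over a countable set, and matching the relations produced to the exact hypotheses of Proposition \ref{mixingmalnormal} --- should be routine.
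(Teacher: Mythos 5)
The paper states this proposition without any proof, so there is no argument of the authors to compare against; your proposal is the evidently intended one, reducing both parts to Proposition \ref{mixingmalnormal} with $\ell=2$ and the trivial family of subgroups (after Thom's reduction to an antisymmetric quasi-cocycle). I checked the details: the three cases are exhaustive, each commutation relation you produce has the required form $k_1\gamma_{n,1}k_2\gamma_{n,2}=e$ with the $\gamma_{n,1}$ pairwise distinct and uniformly bounded and the large element in the $k_1$ slot, the constants ($2C_1+2D$, $2C''+2D$, $4C_0+4D$) match the proposition's conclusion $k_1\in B_{2C+2D}$, and your observation that non-properness is needed only in case (c) --- to supply the infinite bounded set $B_{C_0}$ whose conjugates of a fixed large element land, by properness on the subgroup, in a finite set --- is accurate. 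The proof is correct.
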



\section{Central Sequences, Asymptotic Relative Commutants, and Inner Amenability}

In this section we prove the main results of this note. First we establish a fairly general theorem which classifies all the central sequences in von Neumann algebras arising from groups (or actions of groups) which admit 
unbounded quasi-cocycles into (relative) mixing representations.

\subsection{Non-Gamma factors} 

\begin{thm}\label{controlcentralseq} Let $\Gamma$ be a countable discrete group together with a family of subgroups $\mathcal G$ such that $\G$ satisfies condition $\mathsf{NC}(\Cal G)$. Let $(A,\tau)$ be any amenable von 
Neumann algebra equipped with a faithful, normal trace $\tau$, and let $\Gamma \curvearrowright (A,\tau)$ be any trace preserving action. Also assume that $\omega$ is a free ultrafilter on the positive integers $\mathbb N$.  

Then for any  asymptotically central sequence $(x_n)_n\in M' \cap M^{\omega}$ there exists a finite subset $\mathcal F\subseteq \mathcal G$ such that $(x_n)_n\in \vee_{\Sigma \in \mathcal F}(A \rtimes \Sigma)^{\omega} \vee M$ (i.e. the von Neumann subalgebra of $M^\omega$ generated by $M$ and $(A\rtimes \Sigma)^\omega$ for $\Sigma \in \mathcal F$). \end{thm}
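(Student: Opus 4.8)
The plan is to run a deformation/rigidity argument using the deformation $V_t$ associated to the quasi-cocycle (or array) witnessing $\mathsf{NC}(\Cal G)$, combined with the ``malnormality'' consequences proven in Corollary \ref{quasi-control}/\ref{mixingmalnormal1}. First I would set up the Gaussian dilation $\tilde M = (A\bar\otimes D)\rtimes \G$ and the isometries $V_t : L^2(M)\to L^2(\tilde M)$ from Section \ref{sec:deformations}, recalling that they satisfy the transversality property (Proposition \ref{transversality}) and the asymptotic bimodularity over the reduced crossed product (Proposition \ref{almostbimodular}), and that the Gaussian action is strongly ergodic / spectral-gap by Proposition \ref{spectralgap} since $\pi$ is non-amenable. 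Given a central sequence $(x_n)\in M'\cap M^\omega$, the key dichotomy is: either $V_t$ converges uniformly on $\{(x_n)\}$ as $t\to 0$, or it does not. In the first case, a standard Popa-type ``transversality + spectral gap'' argument shows the sequence is essentially unmoved by the deformation, which (by an analysis of how $\om(tq(\g))$ acts) forces $(x_n)$ to asymptotically localize on the set where $\|q(\cdot)\|$ is bounded — i.e.\ on $B_C$ for some $C$ — up to a small error. In the second case, one uses the spectral gap of the Gaussian Koopman representation to derive a contradiction with asymptotic centrality (the sequence would have to be both deformed and commute with a spectral-gap subalgebra).

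The heart of the matter is then to convert ``$(x_n)$ asymptotically supported on $B_C$ and asymptotically commuting with $M$'' into ``$(x_n)\in \vee_{\Sigma\in\Cal F}(A\rtimes\Sigma)^\omega \vee M$.'' Here is where Corollary \ref{quasi-control} (resp.\ \ref{mixingmalnormal1}) enters: writing $x_n = \sum_{\g} a_n^\g u_\g$ with Fourier support concentrated near $B_C$, asymptotic commutation with the unitaries $u_k$ for $k$ ranging over a finite set $F\subset \G\setminus B_{2C+2D}$ forces (via the almost-malnormality $F(B_C\setminus K)\cap (B_C\setminus K)F=\emptyset$) that the Fourier mass of $x_n$ outside the ``small'' set $K$ must be asymptotically negligible; since $K$ is small with respect to $\Cal G$, it is covered by finitely many double cosets $FKF$ with $K$ ranging over finitely many $\Sigma\in\Cal G$, and one extracts the finite subfamily $\Cal F$. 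Iterating this over an exhaustion of $\G\setminus B_{2C+2D}$ by finite sets $F$ and a diagonal argument along $\omega$, one concludes that $(x_n)$ lives in the $\|\cdot\|_2$-closure of $M\cdot\sum_{\Sigma\in\Cal F}L^2(A\rtimes\Sigma)$, which one then upgrades to membership in the von Neumann algebra $\vee_{\Sigma\in\Cal F}(A\rtimes\Sigma)^\omega \vee M$ using that $(x_n)$ is a bounded sequence and the relevant conditional expectations behave well along the ultrafilter.

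A few technical points I would need to handle with care. One is the passage from the ultrapower $M^\omega$ back to representing sequences: the deformation $V_t$ must be applied componentwise and one needs uniform (in $t$) control, so I would work with the family $(V_t)$ acting on $\ell^2$-sequences and use that $(x_n)$ is uniformly bounded to interchange limits; the ultrafilter lets one avoid genuine convergence issues. A second point is that condition $\mathsf{NC}(\Cal G)$ comes in two flavors (unbounded quasi-cocycle into a mixing-relative-$\Cal G$ representation, or symmetric array proper with respect to $\Cal G$); the array case uses Corollary \ref{quasi-control} directly while the mixing case uses the relative version noted after Corollary \ref{mixingmalnormal1}, and the deformation estimates (Propositions \ref{transversality}, \ref{almostbimodular}) are stated for general arrays so cover both. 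A third point: the reduced crossed product only appears in Proposition \ref{almostbimodular}, so the bimodularity must be applied to finitely supported $x,y$ and then the general case obtained by density — routine but needs the amenability of $A$ to get that $M$ is the weak closure of the reduced crossed product acting on $L^2$.

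\textbf{Main obstacle.} I expect the main difficulty to be the combinatorial/geometric extraction step: translating asymptotic commutation into a clean statement that the Fourier support of $(x_n)$ asymptotically concentrates on finitely many $\Cal G$-cosets. The almost-malnormality corollaries give the set-theoretic disjointness $F(B_C\setminus K)\cap (B_C\setminus K)F=\emptyset$, but converting this into a genuine $\|\cdot\|_2$-estimate on the Fourier coefficients — uniformly enough to survive the ultralimit and the exhaustion of $\G\setminus B_{2C+2D}$ by finite sets $F$ — requires a careful bookkeeping of error terms (the defect $D$, the size of $K$, the $L^2$-mass escaping to infinity) and is where the real work lies. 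The deformation-theoretic half (transversality + spectral gap forcing the sequence to stay near $B_C$) is comparatively standard within Popa's framework.
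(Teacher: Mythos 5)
Your proposal follows essentially the same route as the paper: spectral gap of the Gaussian Koopman representation combined with transversality and asymptotic bimodularity shows $V_t$ converges uniformly on $M'\cap M^\omega$, hence central sequences localize on $B_C$, and then the disjointness $F(B_C\setminus K)\cap(B_C\setminus K)F=\emptyset$ from Corollary \ref{quasi-control} plus asymptotic commutation forces the Fourier mass into finitely many double cosets $F\Sigma F$, exactly as in the paper's Case 1. The only (harmless) divergence is in the proper-array case, where no malnormality corollary is needed: the paper instead builds a $\G$-invariant state on $\bb B(L^2(X^\pi))$ from $e_M^\perp V_t(x_n)$ to contradict non-amenability of $\pi$, though your localization argument also works there since properness covers $B_C$ by finitely many sets $F\Sigma F$ directly.
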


\begin{proof} Let $q$ be an arbitrary (anti-)symmetric array into an orthogonal representation $\pi$ which is non-amenable, and consider the corresponding deformation  $V_{t}$ as defined in Section \ref{sec:deformations}. We fix the notation $A\rtimes \G=M\subset \tilde M=(L^\infty(X^\pi)\bar \otimes A)\rtimes \G$. We will prove first that $V_{t}$ converges to the identity on any element of $M' \cap M^{\omega}$. 

So, let $(x_n)_n \in M' \cap M^{\omega}$ and fix $\ve>0$.  
Since the representation $\pi$ is non-amenable, then by Proposition \ref{spectralgap} so is the Koopman representation $\sigma^\pi: \G\ra \mathcal U(L^2(X^\pi)\ominus \mathbb C1)$. This implies that one can find a finite subset $K\subset \G$ and $L>0$ such that for all $\xi\in  L^{2}(\tilde{M}) \ominus L^{2}(M)$ we have  
\begin{equation}\label{511'}\sum_{k\in K}\|u_k\xi-\xi u_k\|_2\geq L\|\xi\|_2.\end{equation}
By Proposition \ref{almostbimodular} above choose $t_\ve>0$ such that  for all $k\in K$ and all $t_\ve\geq t\geq 0$ we have 
\begin{equation}\label{511} \sup_n\|V_t(u_kx_nu_k^*)-u_kV_t(x_n)u_k^*\|_2 \leq \frac{\varepsilon L}{\sqrt 2|K|}.\end{equation}

\noindent Using the transversality property (Proposition \ref{transversality}) together with (\ref{511'}), (\ref{511}), and $\lim_n \|[u_k,x_n]\|_2=0$ for all $k\in K$ we see that for all $t_\ve\geq t\geq 0$ we have 
\begin{equation}\label{511''}\begin{split}&\limsup_n\| V_t(x_n)-x_n \|_2 \\ 
&\leq \limsup_n \sqrt2 \|e^\perp_MV_t(x_n) \|_2 \\
&\leq\limsup_n \frac{\sqrt2}{L} \sum_{k \in K}\| u_k  e^\perp_MV_t(x_n)u^*_k- e^\perp_MV_t(x_n) \|_{2} \\
&\leq \limsup_n\frac{\sqrt2}{L} \left(\sum_{k \in K} \| e_M^\perp V_t(u_k x_{n}u_k^*-x_n) \|_2 + \sum_{k \in K} \| V_t(u_k x_nu_k^*) - u_k V_t(x_{n})u_k^*\|_2\right)\\
&\leq\limsup_n\frac{\sqrt2}{L} \left(\sum_{k \in K} \| [u_k ,x_n] \|_2 +\frac{\ve L}{\sqrt 2}\right)\\
&=\ve,\end{split}\end{equation} 

\noindent which proves the desired claim.\\

From this point the proof breaks in to two cases:\\

\noindent {\bf Case 1.} {\it Let $\pi$ be an orthogonal representation which is non-amenable and mixing with respect to $\Cal G$. We further assume that $\pi$ admits an unbounded quasi-cocycle, and let $q\in QZ^1_{as}(\G,\pi)$ be any such one of some defect $D\geq 0$.}\\

In this case we show that the uniform convergence of $V_t$ will be sufficient to locate the asymptotic central sequences in $M$. Let $(x_n)_n\in M'\cap M^\omega$, and let $\varepsilon > 0$. From the first part there exists $t_{\varepsilon} > 0$ such that for all $0 \leq |t| \leq t_{\varepsilon}$ we have 
\begin{equation}\label{501} \limsup_{n} \| x_{n} - V_{t}(x_{n}) \|_{2} \leq \ve.
\end{equation}
For every $R\geq 0$ denote by $B_{R}= \lbrace g \in \Gamma \,:\, \|q(g) \| \leq R \rbrace$, and by $P_{R}$ the orthogonal projection from $L^{2}(M)$ onto $\overline{span} \lbrace au_{g} \,:\, a \in A, g \in B_{R} \rbrace$. Then (\ref{501}) together with a simple computation show that there exists $C\geq 0$ for which 
\begin{equation}\label{503} \limsup_{n} \| x_{n} - P_{C}(x_{n}) \|_{2} \leq \ve.  \end{equation}
Using the triangle inequality, for every $y \in \mathcal{U}(M)$ we have that
\begin{equation*} \label{502}\begin{split} \| yP_{C}(x_{n}) - P_{C}(x_{n})y \|_{2} &\leq \| y(P_{C}(x_{n}) - x_{n}) - (P_{C}(x_{n}) - x_{n})y \|_{2} + \| [y, x_{n}] \|_{2}\\& \leq 2 \| P_{C}(x_{n}) - x_{n} \|_{2} + \| [y, x_{n}] \|_{2} \end{split}
\end{equation*}
and when this is further combined with (\ref{503}) we get 
\begin{equation}\label{504} \limsup_{n} \| yP_{C}(x_{n}) - P_{C}(x_{n})y \|_{2} \leq 2 \ve, \text{ for all }y \in \mathcal U(M). 
\end{equation}

By Corollary \ref{quasi-control}, for $\g \in (B_{2C+2D})^{c} =\G\setminus B_{2C+2D}$ in there exist finite subsets $F, K \subset \Gamma$, $\mathcal F \subset \mathcal{G}$ such that 
\begin{equation}\label{506}\g(B_{C} \setminus F \mathcal{F} K) \cap (B_{C} \setminus F \mathcal F K)\g = \emptyset.\end{equation} 
Next we show that $(x_n)_n\in  \vee_{\Sigma \in \mathcal F}(A \rtimes \Sigma)^{\omega} \vee M$. Suppose by contradiction this is not the case. Thus by subtracting from $(x_n)_n$ its conditional expectation onto 
$\vee_{\Sigma \in \mathcal F}(A \rtimes \Sigma)^{\omega} \vee M$ we can assume that 
$0\neq (x_{n}) \perp \vee_{\Sigma \in \mathcal F}(A \rtimes \Sigma)^{\omega} \vee M$. Since  $K,F \subset \Gamma$, $\mathcal{F} \subset \mathcal{G}$ are finite this further implies that
\begin{equation} \label{505}\lim_n\| P_{F \mathcal{F} K}(x_{n}) \|_{2} = 0.
\end{equation} 

\noindent Picking  $y=u_{\g}$ with $\g \in (B_{2C+2D})^{c}$ in (\ref{504}), we obtain that 
\begin{equation*} \limsup_{n} \| u_{\g}P_{C}(x_{n}) - P_{C}(x_{n})u_{\g} \|_{2} \leq 2 \ve, 
\end{equation*} and using this in combination with (\ref{505}) we have 
\begin{equation} \label{507}\begin{split} &\limsup_{n} \| u_{\g}P_{B_{C} \setminus F \mathcal{F} K}(x_{n}) - P_{B_{C} \setminus F \mathcal{F} K}(x_{n})u_{\g} \|_{2} \\ 
&\leq  \limsup_{n} \| u_{\g}P_{B_{C}}(x_{n}) - P_{B_{C}}(x_{n})u_{\g} \|_{2} + 2\limsup_{n} \| P_{F \mathcal{F} K}(x_{n}) \|_{2}  \\ &\leq 2 \ve. \end{split}
\end{equation}
Altogether, relations (\ref{507}), (\ref{506}) and (\ref{505}) lead to the following inequality 
\begin{equation*}\begin{split} 4 \ve^{2} &\geq \limsup_{n} \| u_{\g}P_{B_{C} \setminus F \mathcal{F} K}(x_{n}) - P_{B_{C} \setminus F \mathcal{F} K}(x_{n})u_{\g} \|^{2}_{2} \\
& = \limsup_{n}\left ( \| u_{\g}P_{B_{C} \setminus F \mathcal{F} K}(x_{n}) \|^{2}_{2} + \| P_{B_{C} \setminus F \mathcal{F} K}(x_{n})u_{\g} \|^{2}_{2}\right) \\ 
&=2 \limsup_{n}  \|  P_{B_{C} \setminus F \mathcal{F} K}(x_{n}) \|^{2}_{2} \\
&= 2\limsup_{n}\left (\| P_{B_{C}}(x_{n}) \|^{2}_{2} - \| P_{F \mathcal{F} K}(x_{n}) \|^{2}_{2}\right) \\
&= \limsup_{n} 2 \| P_{B_{C}}(x_{n}) \|^{2}_{2} \\
&= 2(1- \ve)^2,\end{split}
\end{equation*}
which for $\ve$ small enough is a contradiction. $\blacksquare$ \\

\noindent{\bf Case 2.} {\it Let $\pi$ be a non-amenable representation, and let $q$ be an array associated to $\pi$ which is proper with respect to $\Cal G$.}\\

For this case, we will suppose by contradiction that $(x_n)_n\not\in \vee_{\Sigma \in \mathcal F}(A \rtimes \Sigma)^{\omega} \vee M$. Let $\xi_n := e_M^\perp V_t(x_n)$. By essentially the same argument as in Theorem 3.2 of 
\cite{CS}, there would then exist a constant $c>0$ such that $\nor{\xi_n}_2\geq c$. Define a sequence of states $\vp_n$ on $\bb B(L^2(X^\pi))$ by $\vp_n(T) := \frac{1}{\nor{\xi_n}_2}\ip{(T\otimes 1)\xi_n}{\xi_n}$. Using 
proposition \ref{almostbimodular}, for every $\g\in\G$ it follows that $\abs{\vp_n(\pi_\g T \pi_\g^*) - \vp_n(T)}\to 0$ uniformly for $\nor{T}_\infty\leq 1$. Thus, by taking an arbitrary ultralimit $\vp(T) := \Lim_n \vp_n(T)$, 
we have constructed a state $\vp$ on $\bb B(L^2(X^\pi))$ which is invariant under conjugation by the unitaries $\pi_\g$, $\g\in\G$, contradicting the assumption that $\pi$ is a non-amenable representation.

\end{proof}

A well-known theorem of Connes (Theorem 2.1 in \cite{Connes-inj}) shows that property Gamma is equivalent to the existence of a net $\xi_n\in L^2(M)\ominus \bb C\hat 1$ of unit-norm vectors such that 
$\nor{x\xi_n - \xi_n x}_2\to 0$ for all $x\in M$. 

\begin{cor}If in the previous theorem we assume in addition that the family $\mathcal G$ consists only of non-inner amenable subgroups then $M'\cap M^\omega \subseteq A^\omega\rtimes \G$. Therefore if the actions 
$\G\ca A$ is also free and strongly ergodic then $M$ does not  have property \emph{Gamma} of Murray and von Neumann.\end{cor}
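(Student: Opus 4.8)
The plan is to deduce the Corollary from Theorem \ref{controlcentralseq} by showing that when every $\Sigma \in \mathcal G$ is non-inner amenable, the conclusion $(x_n)_n \in \vee_{\Sigma \in \mathcal F}(A \rtimes \Sigma)^\omega \vee M$ collapses to $(x_n)_n \in A^\omega \rtimes \G$. First I would reduce to the case of a single subgroup $\Sigma$ (replacing the finite family $\mathcal F$ by repeated application, or observing that the relevant subalgebras commute appropriately with $M$), and identify the obstruction: the only ``new'' part of $(x_n)_n$ beyond $A^\omega \rtimes \G = M \vee A^\omega$ lives in the $(A \rtimes \Sigma)^\omega$-component, so it suffices to show that an asymptotically central sequence for $M$ whose nontrivial part is supported on group elements from $\Sigma$ must in fact be supported on $A$. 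The key point is that $(x_n)_n$ commutes asymptotically not just with $A$ but with all of $L\Sigma \subseteq M$; so after subtracting its conditional expectation onto $A^\omega \rtimes \Gamma$, we get a sequence $(y_n)_n \in (A \rtimes \Sigma)^\omega$, orthogonal to $A^\omega$, which is asymptotically central for $L\Sigma$.

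Next I would run the Connes-type argument (Theorem 2.1 in \cite{Connes-inj}) internally to $\Sigma$: from the asymptotically central sequence $(y_n)_n$ in $(A\rtimes\Sigma)^\omega \ominus A^\omega$, one extracts, via a standard averaging/Fourier-expansion computation over the group elements of $\Sigma$, a net of unit vectors in $\ell^2(\Sigma \setminus \{e\})$ that is almost invariant under the conjugation action of $\Sigma$, i.e.\ $\|u_s \eta_n - \eta_n u_s\|_2 \to 0$ for all $s \in \Sigma$. This is precisely a witness to inner amenability of $\Sigma$, contradicting the hypothesis. Hence no such nontrivial $(y_n)_n$ exists, so $(x_n)_n \in A^\omega \rtimes \G$, which gives $M' \cap M^\omega \subseteq A^\omega \rtimes \G$. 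For the second assertion, if in addition $\G \ca A$ is free and strongly ergodic, then $A^\omega \cap M' = \bb C$ by strong ergodicity (a central sequence in $A$ commuting with the $u_\g$'s is trivial), and freeness plus the crossed-product structure forces $M' \cap M^\omega = (A^\omega \rtimes \G) \cap M' = \bb C$, so $M$ has trivial asymptotic center and therefore lacks property Gamma by the aforementioned theorem of Connes.

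The main obstacle I expect is the extraction of the almost-invariant vectors in $\ell^2(\Sigma \setminus \{e\})$ from the operator-algebraic central sequence: one must carefully handle the coefficient algebra $A$ and verify that the piece of $(y_n)_n$ orthogonal to $A^\omega$ genuinely produces vectors supported off the identity coset (so that one lands in $\ell^2(\Sigma \setminus \{e\})$ rather than $\ell^2(\Sigma)$), and that the almost-invariance survives passing from asymptotic commutation with the unitaries $u_s \in L\Sigma$ to almost-invariance of the extracted vectors. This is the standard mechanism behind the inequality ``$L\Sigma$ has property Gamma $\Rightarrow$ $\Sigma$ inner amenable'' for i.c.c.\ $\Sigma$ (a consequence of Theorem 2.1 in \cite{Connes-inj}), but here it must be carried out relative to $A$ and for a sequence living in the ultrapower of the larger algebra $A \rtimes \Sigma$; the technical care is in the bookkeeping of conditional expectations $E_{A^\omega \rtimes \Gamma}$ and $E_{(A\rtimes\Sigma)^\omega}$ and their compatibility.
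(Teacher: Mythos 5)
The paper states this corollary without proof, and your overall strategy --- use Theorem \ref{controlcentralseq} to localize the central sequence, convert the part orthogonal to $A^\omega\rtimes\G$ into a witness of inner amenability of some $\Sigma\in\mathcal G$ via its Fourier coefficients over $\Sigma$, and finally use freeness plus strong ergodicity to reduce $(A^\omega\rtimes\G)\cap M'$ to the scalars --- is certainly the intended mechanism. The last step is correct as you describe it, and the Effros/Connes-type extraction of almost conjugation-invariant unit vectors in $\ell^2(\Sigma\setminus\{e\})$ from a central sequence genuinely supported on $\Sigma$ (with the $e$-coefficient removed by orthogonality to $A^\omega$) is also sound.

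There is, however, a genuine gap in your reduction. Theorem \ref{controlcentralseq} places $(x_n)_n$ in $\vee_{\Sigma\in\mathcal F}(A\rtimes\Sigma)^\omega\vee M$, the von Neumann algebra \emph{generated} by $M$ and the $(A\rtimes\Sigma)^\omega$, not in the linear span $M+\sum_\Sigma(A\rtimes\Sigma)^\omega$. After subtracting $E_{A^\omega\rtimes\G}((x_n)_n)$ you therefore do not land in $(A\rtimes\Sigma)^\omega\ominus A^\omega$: the orthogonal complement of $A^\omega\rtimes\G$ in the generated algebra is spanned by words such as $u_\g y u_{\g'}$, and longer alternating products $y_1 m y_2$ with the $y_i$ from possibly different $(A\rtimes\Sigma_i)^\omega$, whose Fourier support lies in two-sided translates $\g\Sigma\g'$ and in sets like $\Sigma_1 F\Sigma_2$ rather than in $\Sigma$ itself. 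On such a translate, conjugation by $s\in\Sigma$ carries $\g\Sigma\g'$ to $s\g\Sigma\g's^{-1}$, which is in general a different set, so the mass distribution $\eta_n(t)=\|a_t^{(n)}\|_2^2$ restricted there is not an almost-invariant vector for the conjugation action of $\Sigma$ on $\Sigma\setminus\{e\}$; your extraction step does not apply as stated. To close the gap one must either work directly from the proof of Case~1 of the theorem (where the mass of $P_C(x_n)$ is shown to concentrate on finitely many sets $F\Sigma F$) and prove that an asymptotically $\G$-conjugation-invariant, asymptotically diffuse probability distribution on $\G\setminus\{e\}$ concentrated on $\bigcup_i F_i\Sigma_i F_i$ forces the conjugation action of some $\Sigma_i$ on $\ell^2(\Sigma_i\setminus\{e\})$ to be amenable --- this is where non-inner amenability, in the form of the footnote's characterization, must actually enter --- or else carry out a disjointification of translates along the lines of the proof of Theorem \ref{non-inner}. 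Your reduction to a single $\Sigma$ ``by repeated application'' is likewise asserted rather than proved. None of this is fatal, but as written the central step does not follow from what the theorem gives you.
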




\subsection{Non-inner amenability}

Similar techniques can can be applied in the realm of groups to provide a fairly large class of groups which are not inner amenable; in particular, this extends some of the results covered already by Theorem \ref{controlcentralseq}. A group $\G$ is called \emph{inner amenable} if there exists a finite additive measure $\mu$ on the subsets  $\G\setminus \{e\}$ of total mass one such that  $\mu(X)=\mu(\g^{-1}X\g)$ for all $X\subset \G\setminus \{e\}$. However, in order to apply the method described in the previous theorem we need to use an alternative, $C^*$-algebraic  characterization of inner amenability; $\G$ is inner amenable if and only if there exists a sequence of unit vectors $(\xi_n)_n\in \ell^2 (\G) \ominus \mathbb Ce$ such that $\lim_n\|x\xi_n - \xi_n x\|_2 = 0$, for all $x \in C^*_r(\G)$. 
To properly state our result we need to introduce the following definition. 

\begin{definition}\label{def-icc} Let $\G$ be a countable group and let $\Sigma <\G$ be a subgroup. We say that $\G$ is \emph{i.c.c.\ over $\Sigma$} if for every finite subset $F\subset \G$ there exists $\g\in  \G$ such that 
$\g \left(F\Sigma F\right) \g^{-1} \cap F\Sigma F=\{e\}$.\end{definition} 

We used this terminology only because it naturally extends the classical i.c.c.\ notion for groups. The next result is probably folklore but we include a proof for the sake of completeness.

\begin{prop} \label{icc}Any countable group $\G$ is i.c.c.\ if and only if $\G$ is i.c.c.\ over $\Sigma=\{e\}$.\end{prop}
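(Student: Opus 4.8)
The plan is to unravel both sides to the classical fact that, in an i.c.c.\ group, every finite subset of $\G\setminus\{e\}$ can be conjugated off itself, and then do a short bookkeeping with product sets. First I would record the harmless reductions: in Definition \ref{def-icc} with $\Sigma=\{e\}$ one has $F\Sigma F=FF$, and enlarging $F$ to $F\cup\{e\}$ only enlarges $FF$, so it suffices to verify the defining property on finite sets $F$ with $e\in F$ (hence $e\in FF$); and for such $F$, writing $S:=FF\setminus\{e\}$, the condition $\g(FF)\g^{-1}\cap FF=\{e\}$ is equivalent to $\g S\g^{-1}\cap S=\emptyset$, because $\g e\g^{-1}=e$ and $e\notin S$. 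Thus ``$\G$ is i.c.c.\ over $\{e\}$'' says exactly: for every finite $S\subseteq\G\setminus\{e\}$ there is $\g$ with $\g S\g^{-1}\cap S=\emptyset$.

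For the implication ``i.c.c.\ over $\{e\}$ $\Rightarrow$ i.c.c.'' I would argue contrapositively: if $\G$ is not i.c.c., pick $g\neq e$ with finite conjugacy class $\mathcal C=\{g_1,\dots,g_k\}$ and set $F:=\{e\}\cup\mathcal C$. For any $\g$, the element $\g g_1\g^{-1}$ equals some $g_j\in\mathcal C$, and it lies both in $FF$ (as $g_j=e\cdot g_j$) and in $\g F\g^{-1}\subseteq\g(FF)\g^{-1}$, while being $\neq e$; hence $\g(FF)\g^{-1}\cap FF\neq\{e\}$ for every $\g$, so $\G$ is not i.c.c.\ over $\{e\}$.

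For the converse, assume $\G$ is i.c.c.\ and fix a finite $S\subseteq\G\setminus\{e\}$; by the reduction above it suffices to find $\g$ with $\g S\g^{-1}\cap S=\emptyset$. The ``bad'' conjugators form the finite union $\bigcup_{x,y\in S}\{\g:\g x\g^{-1}=y\}$, and each nonempty set $\{\g:\g x\g^{-1}=y\}$ is a left coset of the centralizer $C_\G(x)$, which has infinite index in $\G$ by the orbit--stabilizer correspondence together with the infinitude of the conjugacy class of $x\neq e$. A finite union of cosets of infinite-index subgroups is a proper subset of $\G$ by B.H.\ Neumann's covering lemma, so a suitable $\g$ exists. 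The only substantive ingredient is Neumann's lemma in this last step; the rest is definition-chasing, and reproving that special case is the step to expect as the main obstacle if a wholly self-contained argument were desired.
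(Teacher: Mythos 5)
Your proof is correct, and the reductions at the start (replacing $F$ by $F\cup\{e\}$, passing from $F\Sigma F=FF$ to $S=FF\setminus\{e\}$, and identifying ``i.c.c.\ over $\{e\}$'' with the statement that every finite $S\subseteq\G\setminus\{e\}$ can be conjugated entirely off itself) match the paper's reduction exactly; your contrapositive treatment of the easy direction is also fine (the paper simply declares that direction obvious). Where you genuinely diverge is in the main implication. The paper proves that an i.c.c.\ group conjugates every finite $K\subset\G\setminus\{e\}$ off itself by a Hilbert-space argument: assuming $\g K_0\g^{-1}\cap K_0\neq\emptyset$ for all $\g$, the characteristic function $\xi$ of $K_0$ in $\ell^2(\G\setminus\{e\})$ satisfies $\langle u_\g\xi u_{\g^{-1}},\xi\rangle\geq 1/|K_0|$, so the unique norm-minimal element $\zeta$ of the closed convex hull of the conjugation orbit of $\xi$ is nonzero and conjugation-invariant, forcing a nonzero $\ell^2$-coefficient to be constant on an infinite conjugacy class --- a contradiction. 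You instead observe that the ``bad'' conjugators form a finite union of left cosets of centralizers $C_\G(x)$ with $x\neq e$, each of infinite index by i.c.c., and invoke B.~H.~Neumann's covering lemma to conclude this union is proper. Both arguments are valid; yours is more elementary and purely combinatorial but outsources the real content to Neumann's lemma, while the paper's circumcenter trick is self-contained modulo standard Hilbert-space facts and is more in the spirit of the operator-algebraic techniques used throughout the rest of the paper.
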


\begin{proof} One can easily see that the reverse implication holds, so we need only show the forward implication. This follows immediately once we show that for every finite subset 
$ K\subset \G\setminus \{e\}$ there exists $\g\in \G$ such that $\g K\g^{-1}\cap  K =\emptyset$. We proceed by contradiction, so suppose there exists a finite set $K_0\subset \G\setminus \{e\}$ such that for 
all $\g\in \G$ we have that \begin{equation}\label{16}\g K_0\g^{-1}\cap  K_0 \neq \emptyset.\end{equation} Consider the Hilbert space $\mathcal H =\ell^2(\G\setminus \{e\})$ and denote by $\xi$ the characteristic
function of $K_0$. Since $K_0$ is finite then $\xi \in \mathcal H$. From (\ref{16}), a simple calculation shows that \begin{equation}\label{17}\langle u_\g\xi u_\g^{-1}, \xi\rangle \geq \frac{1}{|K_0|}>0\text{, for all }
\g\in \G.\end{equation} Therefore if we denote by $\mathcal K \subset \mathcal H$ the closed, convex hull of the set $\{u_\g\xi u_{\g^{-1}} \,:\, \g\in \G\}$ and denote by $\zeta$ the unique $\|\cdot \|_2$-minimal 
element in $\mathcal K$. Then from (\ref{17}) we have that $\langle \zeta, \xi\rangle \geq 1/|K_0|>0$, in particular $\zeta \neq 0$. Hence, if we decompose $\zeta=\sum_{\g\in \G\setminus \{e\}}\zeta_\g\delta_\g$, 
there exists $\la\in \G\setminus \{e\}$ such that $\zeta_\la\neq 0$. On the other hand, by uniqueness, $\zeta$ satisfies that $u_\g\zeta u_{\g^{-1}}=\zeta$, for all $\g\in \G$; hence, we have $0\neq \zeta_\la=\zeta_{\g\la \g^{-1}}$,
for all $\g\in \G$. Since $\zeta \in \ell^2(\G)$ it follows that the orbit under conjugation $\{\g\la\g^{-1}\,:\,\g\in \G\}$ is finite thus contradicting the i.c.c.\ assumption on $\G$.  \end{proof}
Now with these notations at hand we are ready to state the main theorem.

\begin{thm}\label{non-inner} Let $ \G$ be an i.c.c., countable, discrete group together with a family of subgroups $\mathcal G$ so that $\G$ is i.c.c.\ over every subgroup $\Sigma\in \Cal G$. If $\G$ satisfies condition 
$\mathsf{NC}(\Cal G)$, then $\G$ is not inner amenable. \end{thm}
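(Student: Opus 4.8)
\textbf{Proof plan for Theorem \ref{non-inner}.} The strategy is to mirror the operator-algebraic argument from Theorem \ref{controlcentralseq}, but working inside the reduced group $C^*$-algebra rather than the crossed product, and exploiting the stronger i.c.c.-over-$\Sigma$ hypothesis to kill the "relative" part of the central sequence. Suppose for contradiction that $\G$ is inner amenable, and fix a sequence of unit vectors $(\xi_n)_n \in \ell^2(\G)\ominus \bb C\de_e$ with $\nor{u_\g \xi_n - \xi_n u_\g}_2 \to 0$ for every $\g\in\G$ (equivalently, for every $x\in C^*_r(\G)$). Take the array $q$ into the non-amenable representation $\pi$ given by condition $\mathsf{NC}(\Cal G)$, and form the associated deformation: here one works with the $V_t$ on $\ell^2(\G)$ directly, or equivalently views $(\xi_n)_n$ as living in $L^2(L\G)$ and applies the Gaussian dilation machinery of Section \ref{sec:deformations} with $A = \bb C$. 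The asymptotic bimodularity (Proposition \ref{almostbimodular}) and the spectral gap of the Koopman representation (Proposition \ref{spectralgap}) combine, exactly as in the computation (\ref{511''}), to show that $V_t(\xi_n) \to \xi_n$ uniformly in $n$ as $t\to 0$, i.e. $\limsup_n \nor{\xi_n - V_t(\xi_n)}_2$ is small for $t$ small.

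From there the argument splits along the two clauses of $\mathsf{NC}(\Cal G)$, just as in Theorem \ref{controlcentralseq}. In the mixing-quasi-cocycle case, the convergence $V_t(\xi_n)\to\xi_n$ forces, via the transversality property (Proposition \ref{transversality}) and a Fourier-support estimate, that $\xi_n$ is asymptotically supported on a ball $B_C = \{\g : \nor{q(\g)}\le C\}$; that is, $\limsup_n \nor{\xi_n - P_C\xi_n}_2 \le \ve$, where $P_C$ is the projection onto $\overline{\mathrm{span}}\{\de_\g : \g\in B_C\}$. Then for $\g\notin B_{6C+6D}$ (or $B_{2C+2D}$, depending on whether $q$ is bounded on the subgroups in $\Cal G$) Corollary \ref{quasi-control} produces finite sets $F, K\subset\G$ and a finite subfamily $\Cal F\subset\Cal G$ with $\g(B_C\setminus F\Cal F K)\cap(B_C\setminus F\Cal F K)\g = \emptyset$; combined with the approximate conjugation-invariance $\nor{u_\g\xi_n - \xi_n u_\g}_2\to 0$, this squeezes the mass of $\xi_n$ off of $B_C\setminus F\Cal F K$, so that asymptotically $\xi_n$ lives near $\ell^2(\bigcup_{\Sigma\in\Cal F} F\Sigma F)$. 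In the proper-array case one instead runs the state-on-$\bb B(L^2(X^\pi))$ averaging argument of Case 2 of Theorem \ref{controlcentralseq}: if $\xi_n$ did not asymptotically live near such a set, $e^\perp V_t(\xi_n)$ would be bounded below, and averaging the vector states against the $\pi_\g$-conjugation would yield an invariant state, contradicting non-amenability of $\pi$.

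It remains to eliminate the "relative" part, i.e. to show that an inner-amenability sequence cannot in fact concentrate near $\ell^2(\bigcup_{\Sigma\in\Cal F} F\Sigma F\setminus\{e\})$ for a finite $\Cal F\subset\Cal G$ and finite $F$. This is precisely where the hypothesis that $\G$ is i.c.c.\ over every $\Sigma\in\Cal G$ enters, and it is the analogue, for the subgroup $F\Sigma F$, of the classical fact (Proposition \ref{icc}) that an i.c.c.\ group has no inner-amenable sequence concentrated on a conjugation-finite set. Concretely: after the previous step, the component $P_{F\Cal F K}\xi_n$ carries essentially all the norm, so replacing $\xi_n$ by this component we may assume $\xi_n\in \ell^2(\bigcup_{\Sigma\in\Cal F}F\Sigma F)\ominus \bb C\de_e$. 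Writing $S = \bigcup_{\Sigma\in\Cal F} F\Sigma F$, the i.c.c.-over-$\Sigma$ condition gives, for any finite $F'\supseteq F$, an element $\g$ with $\g S'\g^{-1}\cap S' = \{e\}$ where $S' = \bigcup_{\Sigma\in\Cal F}F'\Sigma F'$; choosing $F'$ large enough that $\xi_n$ is $\e$-concentrated on $S'$ and testing approximate invariance against this $\g$ (exactly as in the convexity/uniqueness argument of Proposition \ref{icc}, or more simply by the orthogonality $\langle u_\g\xi_n u_\g^{-1}, \xi_n\rangle$ being forced near $0$ while approximate invariance forces it near $\nor{\xi_n}^2$) produces the contradiction $1 \approx \langle u_\g\xi_n u_\g^{-1},\xi_n\rangle \approx 0$.

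\textbf{Main obstacle.} The delicate point is the bookkeeping in the last paragraph: the finite set $F$ and the finite subfamily $\Cal F$ produced by Corollary \ref{quasi-control} depend on $C$, which in turn depends on $\ve$, and one must verify that the i.c.c.-over-$\Sigma$ hypothesis — which is stated for a \emph{single} subgroup $\Sigma$ and finite $F$ — can be applied uniformly to the finite union $\bigcup_{\Sigma\in\Cal F}F\Sigma F$ (for instance, by finding one common $\g$ conjugating the whole union off itself, which requires iterating the definition and enlarging $F$ finitely many times). Handling the $K^2\cup K$ thickening from the bounded-on-$\Cal G$ version of Corollary \ref{quasi-control} adds another layer of the same kind of finite bookkeeping. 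The spectral-gap and deformation inputs are, by contrast, essentially verbatim from Theorem \ref{controlcentralseq} with $A=\bb C$.
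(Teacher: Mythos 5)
Your first two-thirds is the paper's argument essentially verbatim: assume inner amenability, run the Gaussian dilation with $A=\bb C$, combine Proposition \ref{spectralgap}, Proposition \ref{almostbimodular}, and the transversality property to get $\sup_n\nor{\xi_n-V_t(\xi_n)}_2\to 0$, concentrate $\xi_n$ on a ball $B'_C$, and then use Corollary \ref{quasi-control} (resp.\ properness of the array, which the paper applies directly via $B_C\subset\bigcup_{K\in\Cal K}FKF$ rather than through the invariant-state argument of Case 2) to trap the surviving mass inside a finite union $\bigcup_{\Sigma\in\Cal F}F\Sigma F$. The genuine gap is in your endgame. You need one element $\g$ with $\g S'\g^{-1}\cap S'=\{e\}$ for the whole union $S'=\bigcup_{\Sigma\in\Cal F}F'\Sigma F'$, so that approximate conjugation-invariance makes $\langle u_\g\xi_n u_{\g}^{-1},\xi_n\rangle$ simultaneously near $1$ and near $0$. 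But Definition \ref{def-icc} is a one-subgroup-at-a-time statement: it supplies, for each $\Sigma\in\Cal F$ separately, some $\g_\Sigma$ with $\g_\Sigma(F\Sigma F)\g_\Sigma^{-1}\cap F\Sigma F=\{e\}$; it says nothing about the cross-intersections $\g(F\Sigma F)\g^{-1}\cap F\Sigma' F$ for $\Sigma\neq\Sigma'$, and it does not produce a common conjugator even for the diagonal terms. ``Iterating the definition and enlarging $F$'' does not manufacture one, since each application still only controls a single $\Sigma$. So the step you flag as the ``main obstacle'' is not bookkeeping; as written it invokes a strictly stronger hypothesis than the theorem assumes.

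The paper sidesteps this entirely, and this is the part of its proof you would need to import. After localizing, it passes to a subsequence along which a \emph{single} $\Sigma\in\Cal G_o$ carries a definite but possibly small fraction of the mass, $\nor{P_{F\Sigma F\setminus\{e\}}(\xi_n)}_2^2\geq D_\ve:=(1-6\ve^2)/|\Cal G_o|$. It then applies the i.c.c.-over-$\Sigma$ hypothesis to that one $\Sigma$ only, inductively building finite sets $F_\ell,G_\ell$ so that the sets $(F_\ell\Sigma G_\ell)\setminus\{e\}$ are pairwise disjoint and each carries mass at least $D_\ve$: the conjugate of $F'\Sigma G'$ by the element $\mu$ furnished by the definition is again of the form $(\mu F')\Sigma(G'\mu^{-1})$, and approximate invariance transports the mass onto it. Parseval's identity then gives $1\geq \ell D_\ve$ for every $\ell$, a contradiction. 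The crucial difference is that this argument needs only a \emph{positive} amount of mass on a single $F\Sigma F$ --- exactly what survives after splitting the finite union --- whereas your inner-product argument needs mass near $1$ on a set that can be conjugated off itself in one shot, which the hypothesis does not provide. Replacing your last paragraph with this disjoint-translates count repairs the proof.
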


 Since any multiple of the left-regular representation of any non-amenable group is both non-amenable and mixing, the theorem shows that every non-amenable i.c.c.\ group $\G$ satisfying 
 $QH_{as}^1(\G , \la_\G^{\oplus\infty} )\neq \emptyset$  is not inner amenable; in particular, $L\G$ does not have property \emph{Gamma} of Murray and von Neumann. By Theorem 1.4  in \cite{HO}  this covers all non-amenable 
 groups which admit hyperbolically embedded subgroups, so our result recovers Theorem 8.2 (f) from \cite{DGO}. The result also demonstrates that all groups with positive first $\ell^2$-Betti number are either finite or non-inner 
 amenable since for any such group $\G$ it holds that $H^1(\G, \ell^2(\G))\neq 0$, cf. \cite{PeTho}. Finally, we point out that in the case that $\G$ has a non-amenable orthogonal representation $\pi$ which admits a proper symmetric array (i.e., $\G$ belongs to the class $\Cal{QH}$), then the fact that $\G$ is not inner amenable is already contained in Proposition 1.7.5 in \cite{CS}.

\begin{proof} [Proof of Theorem \ref{non-inner}]We will proceed by contradiction, so suppose $\G$ is inner amenable. Thus there exists a sequence $(\xi_n)_n\in \ell^2(\G)\ominus \mathbb C e$ of unit vectors such that  for 
every $x\in C^*_r(\G)$ we have 
 \begin{equation}\label{ninner1}\lim_{n\ra 0}\|x\xi_n-\xi_nx\|_2=0.\end{equation} 
 
Let $\pi$ be a non-amenable representation, and let $q$ be any (anti-)symmetric array associated to $\pi$. As in the previous theorem, let $M=L\G$ and let $\tilde M =L^\infty(Y^\pi)\rtimes \G$ be the Gaussian construction 
associated with $\pi$. Consider $V_t:L^2(M)\ra L^2(\tilde M)$ for $t\in \mathbb R$, the associated path of unitaries as defined in subsection \ref{sec:deformations}. Using the non-amenability of $\pi$, the same spectral 
gap argument as in Theorem \ref{controlcentralseq} shows that $\lim_{t\ra 0}\left(\sup_n\|e^\perp_M V_t(\xi_n)\|_2\right)=0$. By the transversality property (Proposition \ref{transversality})  this gives that   
$\lim_{t\ra 0}\left(\sup_n\|\xi_n-V_t(\xi_n)\|_2\right)=0$. Then a simple calculation shows that for every $\ve >0$ there exists $C\geq 0$ such that 
\begin{equation}\label{ninner2}\sup_n\|\xi_n- P_{B'_C}(\xi_n)\|_2\leq \ve.\end{equation} As before, we have denoted by $P_{B'_C}$ the orthogonal projection from $\ell^2(\G)$ onto the Hilbert subspace $\ell^2(B'_C)$ with 
$B'_C=\{
\la \,:\, \|r(\la)\|\leq C, \la\neq e\}$ being the ball of radius $C$ centered and pierced at the identity element $e$. 
 
Using the triangle inequality,  relations (\ref{ninner1}) and (\ref{ninner2}) show that for every $\g\in \G$ we have 
\begin{equation}\label{ninner3} \limsup_n \|u_\g P_{B'_C}(\xi_n)-P_{B'_C}(\xi_n)u_\g \|_2\leq 2\ve.\end{equation}

If we write $\xi_n=\sum_{\eta \in \G}  \xi^n_\eta\delta_\eta$ with $\xi^n_\eta\in \mathbb C$ then (\ref{ninner3}) gives the following estimates 

\begin{equation*}\label{ninner4}\begin{split}4\ve^2&\geq \limsup_n\|\sum_{\eta\in B'_C}\xi^n_\eta\delta_{\g\eta}-\xi^n_\eta\delta_{\eta\g }\|^2_2\\
&=\limsup_n\left(\|\sum_{s\in \g B'_C\setminus B'_C\g}\xi^n_{\g^{-1}s}\delta_s\|^2+\|\sum_{s\in B'_C\g\setminus \g B'_C}\xi^n_{s\g^{-1}}\delta_s\|^2+\|\sum_{s\in \g B'_C\cap B'_C\g}(\xi^n_{\g^{-1}s}-\xi^n_{s\g^{-1}})\delta_s\|^2\right)\\
&=\limsup_n\left(2\sum_{s\in B'_C}|\xi^n_{s}|^2+\sum_{s\in \g B'_C\cap B_C\g}|\xi^n_{\g^{-1}s}-\xi^n_{s\g^{-1}}|^2-\sum_{s\in \g B'_C\cap B'_C\g}(|\xi^n_{\g^{-1}s}|^2+|\xi^n_{s\g^{-1}}|^2)\right).
\end{split}\end{equation*}
Since $\xi_n$ are unital vectors then the previous estimate together with (\ref{ninner2}) show that 
\begin{equation*}\label{ninner5}\begin{split}&4\ve^2+ \limsup_n\sum_{s\in \g B'_C\cap B'_C\g}\left (|\xi^n_{\g^{-1}s}|^2+|\xi^n_{s\g^{-1}}|^2\right )\\
&\geq \limsup_n\left(2\|P_{B'_C}(\xi_n)\|^2_2+\sum_{s\in \g B'_C\cap B'_C\g}|\xi^n_{\g^{-1}s}-\xi^n_{s\g^{-1}}|^2\right)\\
&\geq 2(1-\ve^2). 
\end{split}\end{equation*} 
Altogether, the previous inequalities imply that for every $\g\in \G$ we have 
\begin{equation}\label{ninner7}\limsup_n \left (\sum_{s\in (\g B'_C\g^{-1}\cup \g^{-1}B'_C\g)\cap B'_C}|\xi^n_s|^2\right)\geq 2(1-3\ve^2),\end{equation} 
and since $\sum_s|\xi^n_s|^2=\|\xi_n\|^2=1$ we conclude that,  for every $\g\in \G$ we have 
\begin{equation}\label{ninner11}\limsup_n\|P_{A_\g}(\xi_n)\|^2_2=\limsup_n \left (\sum_{s\in A_\g}|\xi^n_s|^2\right)\geq 1-6\ve^2.\end{equation} 
\noindent Here for every $\g\in \G$ we have denoted by $A_\g=\g B'_C\g^{-1}\cap B'_C$ and for a set $\Omega \subset \G$ we denote by $P_{\Omega}$ the orthogonal projection from $\ell^2(\G)$ onto $\ell^2(\Omega)$.\\

\noindent{\bf Claim.} We have assumed that $\G$ is i.c.c.\ over every $\Sigma \in \mathcal G$ and admits a map $q: \G\to \Cal H$ such that either: (1) $q$ is a proper array associated to $\pi$; or (2) $\pi$ is mixing 
with respect to $\mathcal G$  and $q$ is an unbounded quasi-cocycle. In either case we claim that there exist finite subsets $\mathcal G_o \subset \mathcal G$ and $F\subset \G$ such that 
$A_\g\subseteq \cup_{\Sigma \in \mathcal G_o}F\Sigma F$.  When $q$ is a proper array this follows because $A_\g \subset B_C$ which by the properness assumption is contained in a finite union of finitely many left-right 
translates of groups in $\mathcal G$. In the other case our claim follows from Corollary \ref{quasi-control}. $\blacksquare$ \\

Using our claim, after passing to a subsequence of $\xi_n$, the inequality (\ref{ninner11}) implies that for all $n$ we have 
\begin{equation*}\|P_{\cup_{\Sigma\in \mathcal G_o}F\Sigma F}(\xi_n)\|^2_2=\sum_{s \in \cup_{\Sigma \in \mathcal G_o}F \Sigma F}|\xi^{n}_s|^2  \geq 1-6\ve^2.\end{equation*}
Thus since $\mathcal G_o$ is finite then passing one more time to a subsequence of $\xi_n$ there exists $\Sigma \in \mathcal G_o$ such  that for all $n$ we have 
\begin{equation}\label{12}\|P_{F\Sigma F\setminus \{e\}}(\xi_n)\|^2_2=\|P_{F\Sigma F}(\xi_n)\|^2_2=\sum_{s\in F\Sigma F}|\xi^n_s|^2\geq \frac{1-6\ve^2}{|\mathcal G_o|}:=D_\ve>0,\end{equation} 
where $P_{F\Sigma F}$ denotes the orthogonal projection from $\ell^2(\G)$ onto $\ell^2(F\Sigma F)$. 
Next we claim that from  the assumption that $\G$ is i.c.c. over every group in $\mathcal G$  one can construct inductively two infinite sequences $(F_\ell)_\ell, (G_\ell)_\ell$ of finite subsets of $\G $ such that 
$(F_\ell\Sigma G_\ell)\setminus \{e\}$ are pairwise disjoint sets and  $\limsup_n\|P_{F_\ell\Sigma G_\ell \setminus \{e\}}(\xi_{n})\|^2_2\geq D_\ve$, for all $\ell\in \mathbb{N}$. Therefore using Parseval's identity for every 
$\ell \in \mathbb N$ we have that $1= \limsup_n \|\xi_{n}\|^2_2\geq \limsup_n \sum^\ell_{i=1}\|P_{F_\ell\Sigma G_\ell \setminus \{e\}}(\xi_{n}) \|^2_2\geq \ell D_\ve$, which is a contradiction when $\ell$ is arbitrarily large; 
hence it will follow that $\G$ is not inner amenable.

In the remaining part of the proof of this case we show the claim above by induction on $\ell$. Since the case $\ell=1$ follows immediately from (\ref{12}) by letting $F_1=G_1=F$, we only need to show the induction step. 
So assume that for $1\leq i \leq \ell$ we have constructed finite subsets $F_i, G_i\subset \G$ such that sets $(F_i\Sigma G_i)\setminus \{e\}$ are pairwise disjoint  and
 \begin{equation}\label{14}\limsup_n \|P_{F_i\Sigma G_i\setminus \{e\}}(\xi_{n})\|^2_2\geq D_\ve\text{, for all }1\leq i\leq \ell.\end{equation} 
 
 Now we will indicate how to build the  subsets $F_{\ell+1}, G_{\ell+1}\subset \G$ with the required properties.   Since $F_i$ and $G_i$ are finite sets then so are $F'=\cup^\ell_{i=1} F_i$  and $G'=\cup^\ell_{i=1} G_i$ and 
 from the assumption there exists $\mu\in \G$ such that $\mu(F'\Sigma G')\mu^{-1} \cap  F'\Sigma G'=\{e\}$.

 Using the projection formula $P_{\g \Omega \g^{-1}}(\xi)=u_\g P_\Omega( u_{\g^{-1}}\xi u_{\g})u_{\g^{-1}}$ for $\g \in \G$, $\xi\in \ell^2(\G)$, and $\Omega \subseteq \G$ in combinations with the triangle inequality, 
 $\lim_n\|u_{\mu^{-1}} \xi_n u_{\mu}-\xi_n\|_2=0$, and (\ref{14}) we see that  
 \begin{equation*}\label{13}\begin{split}\limsup_n\|P_{\mu (F'\Sigma G')\mu^{-1}\setminus \{e\}}(\xi_{n})\|_2&=\limsup_n\|P_{\mu(F'\Sigma G')\mu^{-1}}(\xi_{n})\|_2\\&\geq \limsup_n\left (\|u_\mu P_{F'\Sigma G'}(\xi_{n})u_{\mu^{-1}}\|_2- \|P_{\mu(F'\Sigma G')\mu^{-1}}(\xi_{n}) - u_\mu P_{F'\Sigma G'}(\xi_n)u_{\mu^{-1}}\|_2\right )\\
 &= \limsup_n\left(\|P_{F'\Sigma G'}(\xi_{n})\|_2- \|P_{F'\Sigma G'}( u_\mu \xi_{n}u_\mu -\xi_{n})\|_2\right) \\
 &\geq  \limsup_n\left(\|P_{F'\Sigma G'}(\xi_{n})\|_2- \| u_{\mu^{-1}} \xi_{n}u_{\mu} -\xi_{n}\|_2 \right)\\ 
  &\geq \limsup_n  \|P_{F'\Sigma G'}(\xi_{n})\|_2-\lim_n  \| u_{\mu^{-1}}\xi_{n}u_{\mu} -\xi_{n}\|_2 \\ 
&\geq D_\ve. 
\end{split}\end{equation*}
Altogether, this computation and the choice of $x$ show that the sets $F_{\ell+1}=\mu F'$ and $G_{\ell+1}=G'\mu^{-1}$ satisfy the required conditions.
\end{proof}

As a corollary we recover and generalize a result of de la Harpe and Skandalis.

\begin{prop}[de la Harpe and Skandalis \cite{HarSka}] (1) If $\G$ is a lattice in a real, connected, semi-simple Lie group $G$ with trivial center and no compact factors, then $\G$ is not inner amenable. (2) In general, let $G = G_1\times G_2$ be a unimodular l.c.s.c.\ group such that $G_1$ is topologically i.c.c.\footnote{That is, for any compact neighborhood $e\in K\subset G$ of the identity, and any neighborhood $U\ni e$, there exists $g_1,\dotsc, g_n\in G$ such that $g_1 K g_1^{-1} \cap \dotsb \cap g_n K g_n^{-1}\subset U$.} and has property (HH). Then any i.c.c.\ irreducible lattice $\G< G$ is not inner amenable.
\end{prop}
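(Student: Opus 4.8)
The plan is to deduce both statements from Theorem~\ref{non-inner}, with part~(1) reducing to part~(2).

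\emph{Part (2).} Since $G_1$ has property (HH) and $\G$ is a lattice in $G=G_1\times G_2$, Example~\ref{ex-lattice} gives that $\G$ satisfies condition $\mathsf{NC}(\Cal K)$, where $\Cal K$ is the family of subgroups $K<\G$ with $pr_1(K)$ pre-compact in $G_1$. By Theorem~\ref{non-inner} it then suffices to check that $\G$ is i.c.c.\ (which is assumed) and that $\G$ is i.c.c.\ over every $K\in\Cal K$. So I would fix $K\in\Cal K$ and a finite set $F\subset\G$, put $S=FKF$, and try to produce $\mu\in\G$ with $\mu S\mu^{-1}\cap S=\{e\}$. First I would record two structural facts, coming from irreducibility together with the i.c.c.\ assumption: (i) $\G\cap(\{e\}\times G_2)=\{e\}$ --- because the $\G$-conjugation action on this discrete $\G$-normal subgroup factors through the dense subgroup $pr_2(\G)$ of $G_2$, and with the i.c.c.\ hypothesis this forces it to be trivial --- so that $pr_1$ is \emph{injective} on $\G$; and (ii) by Borel density, $pr_1(\G)$ is Zariski dense in $G_1$. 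Injectivity of $pr_1|_\G$ is the crucial simplification: for $t\in\G$ one has $t\in S\iff pr_1(t)\in pr_1(S)$, so $\mu S\mu^{-1}\cap S=\{e\}$ is \emph{equivalent} to $pr_1(\mu)\,pr_1(S)\,pr_1(\mu)^{-1}\cap pr_1(S)=\{e\}$. Thus the whole question reduces to: conjugate the compact set $pr_1(S)\subset G_1$ off itself by an element of the Zariski dense subgroup $pr_1(\G)$.

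\emph{The separation step (the crux).} This is where the topological i.c.c.\ of $G_1$ enters in an essential way, and not merely its non-compactness. Since $G_1$ is topologically i.c.c., no nontrivial element of $G_1$ has a pre-compact conjugacy class; hence, writing $L=\overline{pr_1(S)}$ (a compact set with empty interior, as $\overline{pr_1(K)}$ is a proper compact subgroup of $G_1$), one can choose $g_1,\dots,g_m\in G_1$ so that $\{h\in L: g_ihg_i^{-1}\in L\text{ for all }i\}$ lies in an arbitrarily small neighbourhood of $e$. Replacing the $g_i$ by elements of $pr_1(\G)$ (density) confines the potential obstructions $s\in S\setminus\{e\}$ to those with $pr_1(s)$ close to $e$; since $\G$ is discrete in $G_1\times G_2$, these either form a finite set, handled by the ordinary i.c.c.\ of $\G$, or have $G_2$-projections escaping to infinity, handled in the $G_2$-coordinate. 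Assembling a single separating $\mu$ from this analysis --- using the Zariski density of $pr_1(\G)$ to land outside the relevant proper subvariety of ``bad'' conjugators --- finishes the verification that $\G$ is i.c.c.\ over $K$, after which Theorem~\ref{non-inner} yields non-inner amenability. I expect the genuinely delicate case to be when $\overline{pr_1(K)}$ is a \emph{non-abelian} compact subgroup of $G_1$, so that no single conjugation shrinks $L\cap gLg^{-1}$ to a point; handling it cleanly seems to require either ruling out such configurations from the structure of irreducible lattices, or importing the more hands-on set-separation estimates of \cite{HarSka}, and it is here that I expect the main difficulty to lie.

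\emph{Part (1).} Lattices in a center-free semisimple Lie group $G$ without compact factors are automatically i.c.c.\ (by Borel density: a nontrivial element with finite conjugacy class would centralize a Zariski dense subgroup, hence lie in $Z(G)=\{e\}$). If $G$ has Kazhdan's property (T) --- e.g.\ if every simple factor has real rank $\ge 2$ --- then so does $\G$, and an i.c.c.\ Kazhdan group is never inner amenable: the conjugation representation on $\ell^2(\G\setminus\{e\})$ has no nonzero invariant vector (all conjugacy classes being infinite), hence, by property (T), no almost invariant vectors, which by the folklore characterization recalled in the text is exactly non-inner amenability. Otherwise $G$ has a simple factor $H$ without property (T), necessarily $SO(n,1)$ or $SU(n,1)$ up to local isomorphism; such an $H$ is topologically i.c.c.\ and has property (HH) (cf.\ \cite{OPII}). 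Write $G=H\times G'$: if $\G$ is irreducible in $G$, part~(2) applies directly; if not, $\G$ contains a finite-index direct product of irreducible lattices in the factors of $G$ (again center-free, semisimple, without compact factors), and one concludes by induction on the number of simple factors together with the closure of non-inner amenability under finite direct products and under finite-index extensions --- a reduction which is in any case already contained in \cite{HarSka}. The main obstacle throughout remains the separation step of part~(2).
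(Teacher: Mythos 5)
Your proposal follows essentially the same route as the paper: part (1) is reduced to part (2) by the property (T) dichotomy (an i.c.c.\ Kazhdan lattice is not inner amenable; otherwise $G$ has a rank-one factor with property (HH), which is topologically i.c.c.), and part (2) is Example~\ref{ex-lattice} plus Theorem~\ref{non-inner} once one verifies that $\G$ is i.c.c.\ over every subgroup with pre-compact projection to $G_1$. The separation step you single out as the crux is precisely the step the paper disposes of in a single sentence (``by the topological i.c.c.\ property and the irreducibility, which implies that $pr_1(\G)$ is dense in $G_1$''), so your analysis --- including the legitimate observation that topological i.c.c.\ only directly controls finite intersections of conjugates, and the auxiliary reductions via injectivity of $pr_1|_\G$ --- is more, not less, detailed than the paper's own argument.
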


\begin{proof} Note that in case (1) $\G$ is i.c.c.\ as a consequence of Borel density, cf.\ \cite{Harpe-hyp}. Without loss of generality we may assume $\G$ is irreducible, as any lattice is a product of such. In the case (1), we may further assume that $G$ does not have property (T); otherwise, this would imply that $\G$ is an i.c.c.\ property (T) group, therefore not inner amenable. Hence, $G$ has a factor with property (HH), cf.\ \cite{OPII}. Since $G$ is without compact factors, it is topologically i.c.c.; thus, we have reduced case (1) to case (2).

So, now assume we are in the general situation of case (2). By Example \ref{ex-lattice} and Theorem \ref{non-inner}, we need only show that $\G$ is i.c.c. relative to any subgroup $\Sigma$ such that the projection into $G_1$ is pre-compact. This is true by the topological i.c.c.\ property and the irreducibility which implies that the image of $\G$ under the projection is dense in $G_1$.
\end{proof}


\subsection{Further consequences}

If $M$ is a separable $\rm II_1$ factor, a trivial consequence of $M$ not having property $Gamma$ is that $M$ cannot be written as an infinite tensor product of non-scalar finite factors. However, whether not having property $Gamma$ implies the ``stabilized'' version of this property seems to be unresolved in the literature; that is to say, whether $M$ not having property $Gamma$ implies $M\bar\otimes R$ is not isomorphic to a infinite tensor product of non-amenable factors. As usual, $R$ denotes the hyperfinite $\rm II_1$ factor. Under the stronger assumption that each factor in the tensor decomposition is non-$Gamma$, the answer is negative by Theorem 4.1 in \cite{Po2}. In the more restrictive case of factors of groups satisfying $\mathsf{NC}$, we now show that the strong negative does obtain. We remark in passing that Ozawa and Popa's unique prime decomposition theorem (Theorem 2 in \cite{OP}, cf.\ Theorem C in \cite{CS}) demonstrates that much stronger structural results can be obtained by placing more 
stringent assumptions on the groups.

\begin{thm}\label{stable-non-gamma} Let $\{\G_j\,:\, 1\leq j\leq n\}$ be a finite collection of non-amenable i.c.c.\ groups satisfying condition $\mathsf{NC}$. 
If $\{N_i\,:\,  i\in I\}$  is any countable collection of non-amenable ${\rm II}_1$ factors such that $ L\G_1\bar \otimes L\G_2\bar\otimes \cdots \bar \otimes L\G_n\bar\otimes R \cong \bar\otimes_{i\in I}N_i $ then 
$I$ is a finite set. 
\end{thm}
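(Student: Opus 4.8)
The plan is to reduce the statement to the following purely operator‑algebraic assertion: \emph{if $P$ is a full (i.e.\ non‑$\Gamma$) $\mathrm{II}_1$ factor, then $P\bar\otimes R$ is not isomorphic to an infinite tensor product $\bar\otimes_{i\in I}N_i$ of non‑amenable $\mathrm{II}_1$ factors.} The hypothesis on the $\G_j$ enters only through this fullness, so the first task is to check that $P:=L\G_1\bar\otimes\cdots\bar\otimes L\G_n$ is indeed full, and then to exploit the spectral gap that fullness provides for the inclusion $1\bar\otimes R\subseteq P\bar\otimes R$. The obstruction to an infinite decomposition will be that such a decomposition produces a decreasing chain of non‑amenable subfactors whose relative commutants exhaust $M$, which the spectral gap forbids.

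First I would record that each $L\G_j$ is a full $\mathrm{II}_1$ factor: $\G_j$ is i.c.c.\ so $L\G_j$ is a $\mathrm{II}_1$ factor, and Theorem~\ref{controlcentralseq} applied with $A=\mathbb C$ and $\mathcal G=\{\{e\}\}$ (legitimate, since $\G_j$ satisfies condition $\mathsf{NC}$) gives $(L\G_j)'\cap(L\G_j)^\omega\subseteq(\mathbb C\rtimes\{e\})^\omega\vee L\G_j=L\G_j$, whence $(L\G_j)'\cap(L\G_j)^\omega=\mathbb C$. By Connes' theorem that full $\mathrm{II}_1$ factors are closed under finite tensor products \cite{Connes-inj}, $P$ is a full $\mathrm{II}_1$ factor. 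Writing $M=P\bar\otimes R$, one then transfers the spectral gap: identifying $L^2(M)\ominus L^2(1\bar\otimes R)$ with $(L^2(P)\ominus\mathbb C\hat 1)\otimes L^2(R)$ and letting $P\bar\otimes 1$ act by conjugation, the spectral gap of $P$ tensored with $L^2(R)$ yields a finite set $F\subseteq\mathcal U(P\bar\otimes 1)$ and $c>0$ with
\[
\|y-E_{1\bar\otimes R}(y)\|_2\;\le\;c\sum_{u\in F}\|[u,y]\|_2\qquad\text{for all }y\in M .
\]

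Now suppose toward a contradiction that $M\cong\bar\otimes_{i\in I}N_i$ with $I$ infinite and each $N_i$ a non‑amenable $\mathrm{II}_1$ factor. Passing to a countably infinite subset of $I$ and absorbing the remaining factors into one of its members, we may assume $I=\mathbb N$. For $m\ge 0$ put $Q_m:=\bar\otimes_{i\le m}N_i$ and $R_m:=\bar\otimes_{i>m}N_i$; then $M=Q_m\bar\otimes R_m$, each $R_m$ is a $\mathrm{II}_1$ factor containing $N_{m+1}$ (hence non‑amenable), the $R_m$ decrease, and by the commutation theorem $R_m'\cap M=Q_m$, with $\bigcup_m Q_m$ dense in $M$. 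Since the $Q_m$ increase with dense union, $E_{Q_m}\to\mathrm{id}$ pointwise in $\|\cdot\|_2$, so for $m$ large I may choose, for each $u\in F$, a unitary $v_u\in Q_m=R_m'\cap M$ with $\|v_u-u\|_2$ arbitrarily small (using that $E_{Q_m}(u)$ is a contraction of $\|\cdot\|_2$‑norm near $1$, hence close to an honest unitary of $Q_m$). As each $v_u$ commutes with every $y\in R_m$, we get $\|[u,y]\|_2\le 2\|u-v_u\|_2$ for all $y\in\mathcal U(R_m)$, and the displayed inequality gives $\|y-E_{1\bar\otimes R}(y)\|_2<1$ for all $y\in\mathcal U(R_m)$ once the approximation is taken fine enough; in particular $\inf\{\|E_{1\bar\otimes R}(y)\|_2:y\in\mathcal U(R_m)\}>0$. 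By Popa's intertwining criterion this forces $R_m\prec_M 1\bar\otimes R$, and since $R_m$ is a factor the resulting intertwining $*$‑homomorphism is injective, embedding $R_m$ with conditional expectation into an amplification of the amenable algebra $1\bar\otimes R$; hence $R_m$ is amenable, contradicting $R_m\supseteq N_{m+1}$. Therefore $I$ is finite.

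The main obstacle, and the step that genuinely requires deformation/rigidity technology rather than soft arguments, is the last one: upgrading ``every unitary of $R_m$ is $\|\cdot\|_2$‑close to the amenable subalgebra $1\bar\otimes R$'' to ``$R_m$ is amenable,'' which is exactly where Popa's intertwining‑by‑bimodules machinery enters and where the non‑amenability (as opposed to merely non‑$\Gamma$‑ness) of the $N_i$ is used. A secondary point that must be handled with some care is the approximation of the fixed finite set $F$ by unitaries drawn from the increasing relative commutants $Q_m$, and, upstream of everything, the correct bookkeeping in applying Theorem~\ref{controlcentralseq} to obtain fullness of each $L\G_j$ and then Connes' theorem to obtain fullness of the tensor product $P$.
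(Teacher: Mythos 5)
Your proof is correct and follows essentially the same strategy as the paper: a spectral gap estimate for the inclusion $1\bar\otimes R\subseteq M$ forces the tail algebras of any infinite tensor decomposition to lie uniformly $\|\cdot\|_2$-close to $1\bar\otimes R$, whence Popa's intertwining criterion embeds a corner of a non-amenable factor into an amenable algebra, a contradiction. The only (cosmetic) difference is how the spectral gap is sourced — you use fullness of each $L\G_j$ via Theorem \ref{controlcentralseq} together with Connes' closure of full factors under finite tensor products, whereas the paper invokes non-inner amenability of $\G_1\times\cdots\times\G_n$ via Theorem \ref{non-inner}; both routes are available in the paper and yield the same estimate.
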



\begin{proof} Suppose by contradiction that $|I|=\infty$; hence, there exists an infinite sequence $I_n\subset I$ of finite subsets such that $I_n\subset I_{n+1}$ and 
$\cup_nI_n=I$. Denote by $J_n=I^c_n$, by $N(J_n)=\bar\otimes_{i\in J_n} N_i$, and by $N(I_n)=\bar\otimes_{i\in I_n} N_i$. Also denote by $M= L\G_1\bar \otimes L\G_2\bar\otimes \cdots \bar \otimes L\G_n\bar\otimes R$ and 
fix $\omega$ a free ultrafilter on $\mathbb N$. Applying the spectral gap argument as in the proof of Theorem \ref{controlcentralseq} we will show that there exists $s\in \mathbb N$ 
such that $N(J_{s})^\omega\subseteq  L\G_1\bar\otimes \cdots \bar\otimes L\G_n\bar\otimes R^\omega $.   

Throughout the proof we will view $M$ as $R\bar\otimes  L\G$ where $\G :=\G_1\times \cdots \times \G_n$. Since by Theorem \ref{non-inner} each $\G_j$ is non-inner amenable, so is $\G$. Thus by Proposition \ref{spectralgap} there 
exist a finite subset $K\subset \G$ and $C>0$ such that for all $x\in M$ we have
\begin{equation}\label{511'''}\sum_{\g\in K} \|xu_\g-u_\g x\|_2^2\geq C\|E_R(x)-x\|^2_2.
\end{equation} 

Since $M$ is the inductive limit of $N(I_n)$ as $n\ra\infty$ and $N(I_n)$ commute with $N(J_n)$ for all $n$ then using (\ref{511'''}) together with some basic approximations and the triangle inequality we obtain 
the following: for every $\varepsilon>0$ there exist $s_\varepsilon\in\mathbb N$ such that for every $x\in (N(J_{s_\varepsilon}))_1$ we have $\|E_R(x)-x\|_2\leq \varepsilon$. If we let $\varepsilon$ to be small enough,
then applying Popa's intertwining techniques from \cite{P} we obtain that a corner of $N(J_{s_\varepsilon})$ intertwines into $R$ inside $M$. This however is a contradiction because no corner of a non-amenable factor can be intertwined into an amenable von Neumann algebra. Therefore $I$ cannot be infinite, and we are done.  \end{proof}

Following \cite{Sa} a factor $M$ is called \emph{asymptotically abelian} if there exists a sequence of automorphisms $\theta_n\subset {\rm Aut}(M)$ such that $\lim_n\|\theta_n(x)y-y\theta_n(x) \|_2=0$ for all $x,y\in M$. Next we will show that using the previous techniques one can provide a fairly class of algebras that are not asymptotically abelian. In particular the result provides many new examples of McDuff factors which are not asymptotically abelian, enlarging the class of examples found in \cite{Sa}.

\begin{prop} Let $\{\G_i\,:\, 1\leq i\leq n\}$ be a finite collection of non-amenable i.c.c.\ groups satisfying condition $\mathsf{NC}$. If $R$ is any amenable finite factor, 
then the factor $ L\G_1\bar \otimes L\G_2\bar\otimes \cdots \bar \otimes L\G_n\bar\otimes R $ is not asymptotically abelian.
\end{prop}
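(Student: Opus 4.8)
The plan is to derive a contradiction from asymptotic abelianness by combining the central-sequence classification with a non-inner amenability input. Write $M = L\G_1 \bar\otimes \cdots \bar\otimes L\G_n \bar\otimes R$ and put $\G = \G_1 \times \cdots \times \G_n$, so that $M \cong L\G \bar\otimes R$, which is a McDuff factor with trivial center. Suppose toward a contradiction that $M$ is asymptotically abelian, witnessed by automorphisms $\theta_k \in \mathrm{Aut}(M)$ with $\|\theta_k(x)y - y\theta_k(x)\|_2 \to 0$ for all $x, y \in M$. Fix a free ultrafilter $\omega$ on $\mathbb N$. First I would observe that, because $M$ is asymptotically abelian, for any unitary $u \in \mathcal U(M)$ the sequence $(\theta_k(u))_k$ defines a unitary in $M' \cap M^\omega$, and moreover these unitaries generate a \emph{diffuse} abelian subalgebra of $M' \cap M^\omega$ whose image under the trace recovers $\tau$ (this is the standard fact that asymptotic abelianness forces the central sequence algebra to contain the full algebra $M$ ``in the limit,'' in particular it is diffuse and non-atomic).

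The key step is then to apply Theorem~\ref{controlcentralseq} with $A = R$ amenable and with the trivial action, i.e.\ $\G \curvearrowright R$ trivial so that $A \rtimes \G = R \bar\otimes L\G = M$: since each $\G_i$ satisfies $\mathsf{NC}$ (relative to the trivial family, by definition of $\mathsf{NC}$), Proposition~\ref{nc-product} gives that $\G$ satisfies $\mathsf{NC}(\Cal G)$ for $\Cal G$ the family of ``coordinate'' subgroups $\Sigma_i \times \prod_{j\ne i} \G_j$ built from the trivial subgroups --- in fact since the constituent families consist of trivial subgroups, $\Cal G$ is the family of full coordinate direct-factor subgroups $\{e\}\times\cdots\times\{e\}$, i.e.\ $\G$ itself satisfies $\mathsf{NC}$. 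Wait --- more carefully: each $\G_i$ being non-amenable i.c.c.\ satisfying $\mathsf{NC}$ means it admits an unbounded quasi-cocycle into a non-amenable mixing representation; Proposition~\ref{nc-product} then yields that $\G$ satisfies $\mathsf{NC}(\Cal G)$ with $\Cal G = \{\G_1\times\cdots\times\{e\}_i\times\cdots\times\G_n\}$. Since each $\G_i$ is i.c.c., each such $\Sigma \in \Cal G$ is i.c.c.\ as a group, hence non-inner amenable is \emph{false} in general --- so instead I invoke the Corollary after Theorem~\ref{controlcentralseq}: each $\Sigma \in \Cal G$ is a direct product of i.c.c.\ groups, hence i.c.c., hence (being i.c.c.) one cannot directly conclude non-inner amenability, but one \emph{can}: actually the cleaner route is to run the same spectral-gap argument as in Theorem~\ref{stable-non-gamma}. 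Concretely: since each $\G_j$ is non-inner amenable by Theorem~\ref{non-inner} (each being non-amenable i.c.c.\ with $QH^1_{as}(\G_j, \lambda_{\G_j}^{\oplus\infty}) \ne 0$ via condition $\mathsf{NC}$), the product $\G$ is non-inner amenable, so by Proposition~\ref{spectralgap} there are a finite $K \subset \G$ and $C > 0$ with $\sum_{\g\in K}\|xu_\g - u_\g x\|_2^2 \ge C\|E_R(x) - x\|_2^2$ for all $x \in M$.

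Now I would exploit asymptotic abelianness directly against this spectral gap. Pick $u = u_{\g_0}$ for some $\g_0 \in \G \setminus \{e\}$ with $E_R(u_{\g_0}) = 0$. The central sequence $y_k := \theta_k(u_{\g_0}) \in \mathcal U(M)$ satisfies $\lim_k \|y_k u_\g - u_\g y_k\|_2 = 0$ for every $\g \in K \subset M$, so the spectral gap inequality forces $\lim_k \|E_R(y_k) - y_k\|_2 = 0$, i.e.\ $\mathrm{dist}_2(y_k, R) \to 0$. On the other hand, $\tau(y_k y_\ell^*) = \tau(\theta_k(u_{\g_0})\theta_\ell(u_{\g_0})^*)$, and applying $\theta_\ell^{-1}$ and then using asymptotic abelianness plus the fact that $\theta_\ell^{-1}\theta_k(u_{\g_0})$ still asymptotically commutes with $M$ as $k \to \infty$ for fixed $\ell$, one computes $\lim_{k} |\tau(y_k y_\ell^*) - \tau(y_k)\overline{\tau(y_\ell)}|$-type estimates showing the $y_k$ are ``asymptotically orthogonal'' to any fixed finite-dimensional corner while staying trace-zero; combined with $\mathrm{dist}_2(y_k, R) \to 0$ and a maximal-almost-orthogonal-family / Popa intertwining argument exactly as in the proof of Theorem~\ref{stable-non-gamma}, this produces a corner of the non-amenable factor $L\G$ embedding into the amenable algebra $R$ inside $M$, which is absurd. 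Alternatively, and more simply: $\mathrm{dist}_2(y_k, R)\to 0$ means the central sequence $(y_k)$ lies in $R^\omega$, but then $(y_k) \in (R' \cap M)^\omega \cap (M'\cap M^\omega) $; since asymptotic abelianness forces $(y_k)$ to be a nontrivial central sequence not contained in $R^\omega$ (because $y_k = \theta_k(u_{\g_0})$ tracks $u_{\g_0} \perp R$ in a suitable sense through the automorphisms), we reach a contradiction.

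The main obstacle I anticipate is making precise the step ``$\theta_k(u_{\g_0})$ does not asymptotically live in $R^\omega$'' --- i.e.\ ruling out that the automorphisms $\theta_k$ could conspire to push everything into the hyperfinite part. The honest way around this is not to track a single element but to note that asymptotic abelianness gives an embedding $M \hookrightarrow M' \cap M^\omega$ (sending $x \mapsto (\theta_k(x))_k$, trace-preserving), so $M' \cap M^\omega$ contains a copy of the \emph{non-amenable} factor $L\G_1$; then Theorem~\ref{controlcentralseq} (or its Corollary) says $M' \cap M^\omega \subseteq \bigvee_{\Sigma \in \Cal F}(R \rtimes \Sigma)^\omega \vee M = \bigvee_{\Sigma\in\Cal F} (R\bar\otimes L\Sigma)^\omega \vee M$ for some finite $\Cal F \subset \Cal G$; and a non-amenable factor cannot embed into such an algebra in a way compatible with the central-sequence structure --- more precisely, combining with the spectral-gap/intertwining argument above applied to the copy of $L\G_1$, one gets that a corner of $L\G_1$ intertwines into $R$ inside $M$, contradicting non-amenability of $L\G_1$. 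This reduces everything to the genuinely available tools (Theorem~\ref{controlcentralseq}, Proposition~\ref{spectralgap}, Popa intertwining) and the contradiction is clean.
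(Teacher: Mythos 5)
Your first half is right and matches the paper: $\G=\G_1\times\cdots\times\G_n$ is non-inner amenable by Theorem \ref{non-inner}, the conjugation spectral gap inequality $\sum_{\g\in K}\|[x,u_\g]\|_2^2\ge C\|x-E_R(x)\|_2^2$ holds, and therefore every asymptotically central sequence --- in particular $(\theta_k(x))_k$ for each fixed $x\in M$ --- satisfies $\|\theta_k(x)-E_R(\theta_k(x))\|_2\to 0$. The gap is in how you extract a contradiction from this. A trace-preserving embedding of the non-amenable factor $L\G_1$ (or $L\G$) into $R^\omega$ is \emph{not} absurd when $R$ is the hyperfinite ${\rm II}_1$ factor: $R^\omega$ is non-amenable and contains, e.g., $L(\bb F_2)$, so ``a non-amenable factor cannot embed into such an algebra'' is false as stated. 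Nor does the elementwise convergence of $\theta_k(x)$ to $R$ yield Popa intertwining of a corner of $L\G_1$ into $R$: the intertwining criterion requires uniform control over the unitary group of a \emph{fixed} subalgebra, whereas here the subalgebras $\theta_k(L\G_1)$ move with $k$ and, for each fixed $k$, none of them is uniformly close to $R$. You correctly flag this obstacle (``the automorphisms could conspire to push everything into the hyperfinite part''), but neither of your proposed resolutions overcomes it.

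The paper's resolution is an inversion-plus-diagonalization trick that is missing from your argument. Since each $\theta_k(x)$ is asymptotically in $R$ and $P:=L\G$ commutes with $R$ inside $M=P\bar\otimes R$, for \emph{any} bounded sequence $(y_k)_k$ in $P$ one gets $\|[\theta_k(x),y_k]\|_2\to 0$ for all $x\in M$; applying the $\|\cdot\|_2$-isometries $\Phi_k=\theta_k^{-1}$ shows that $(\Phi_k(y_k))_k$ is again asymptotically central, hence also asymptotically in $R^\omega$. Now choose the $y_k$ adaptively: for each fixed $k$, $\Phi_k(P)$ is a non-amenable ${\rm II}_1$ subfactor of $M$, so no corner of it intertwines into the amenable $R$, and Popa's criterion \cite{P} supplies a unitary $u_k\in\mathcal U(P)$ with $\|E_R(\Phi_k(u_k))\|_2\le 1/k$. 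Then $1=\|\Phi_k(u_k)\|_2\le\|E_R(\Phi_k(u_k))\|_2+\|\Phi_k(u_k)-E_R(\Phi_k(u_k))\|_2\to 0$, a contradiction. The point is that the unitary witnessing near-orthogonality to $R$ is chosen \emph{after} $k$, inside the fixed-$k$ algebra $\Phi_k(P)$, which is exactly what tracking a fixed element $u_{\g_0}$ or a fixed copy of $L\G_1$ through the $\theta_k$ cannot provide.
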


\begin{proof} For simplicity we denote by $P :=L\G_1\bar \otimes L\G_2\bar\otimes \cdots \bar \otimes L\G_n$ and $M := P\bar\otimes R $. Suppose by contradiction that $M$ is asymptotically abelian; thus,
there exists a sequence of automorphisms $\theta_k \in {\rm Aut}(M)$ such that $\|\theta_k(x)y-y\theta_k(x)\|_2\ra 0$ as $k\ra \infty$, for all $x,y\in M$. This means that $(\theta_k(x))_k\in M'\cap M^\omega$,  
for all $x\in M$. Since by Theorem \ref{non-inner} all $\G_i$ are not inner amenable then so is $\G=\G_1\times \cdots \times \G_n$ and by Proposition \ref{spectralgap} we have  $(\theta_k(x))_k\in  R^\omega  $ for all $x\in M$ and all $1\leq i\leq n$. 
 This implies that of every $(y_k)_k \in P^\omega$ we have that $\lim_k\| \theta_k(x)y_k-y_k\theta_k(x)\|_2
=0$, for all $x\in M$. Since the automorphisms $\theta_n$ are $\tau$-invariant we obtain that $\lim_k\|x\Phi_k(y_k)-\Phi_k(y_k)x\|_2=0$, for all $x\in M$, where $\Phi_k=\theta_k^{-1}$. Thus $\Phi_k(y_k)$ 
is an asymptotically central sequence, so by the same argument as before we have that $(\Phi_k(y_k))_k\in  R^\omega$, for all $(y_k)_k \in P^\omega$. Thus for all $(y_k)_k \in P^\omega$ we have 
\begin{equation}\label{7101}\lim_{k\ra \omega}\|E_{ R}(\Phi_k(y_k))-\Phi_k(y_k)\|_2=0.\end{equation}  

Next we show that this is will lead to a contradiction. Indeed since $\G_i$ are assumed non-amenable i.c.c.\ groups it follows that $P$ is a non-amenable ${\rm II}_1$ factor. Since $ R$ is amenable then for each 
$k\in \mathbb N$ no corner of $\Phi_k(P)$ can be intertwined in the sense of Popa into $ R$ inside $Q$, \cite{P}. Thus by Theorem 2.3 in \cite{P} for each $k\in \mathbb N$ there exists a unitary $u_k\in \mathcal U(P)$ such 
that $\|E_{ R}(\Phi_k(u_k))\|_2\leq 1/k$. Since $u_k$ is a unitary then using this in combination with (\ref{7101}) we have that
\begin{multline*} 1=\lim_k\|\Phi_k(u_k)\|_2\leq  \lim_k\left(\|E_{ R}(\Phi_k(u_k))\|_2 +\|\Phi_k(u_k)-E_{ R}(\Phi_k(u_k))\|_2\right)\\ \leq \lim_k\left(1/k +\|\Phi_k(u_k)-E_{ R}(\Phi_k(u_k))\|_2\right) =0,\end{multline*} which is a contradiction.\end{proof}


\subsection{Remark on central sequences and simplicity of group $\rm C^*$-algebras}\label{simplicity} In \cite{Tucker-Drob}, the author poses the question of whether the reduced $\rm C^*$-algebra of an i.c.c., countable, discrete group $\G$ is simple if the group has positive first $\ell^2$-Betti number. In light of the fact that, by Theorem 8.12 in \cite{DGO}, $C^*$-simplicity is known for groups admitting a non-degenerate hyperbolically embedded subgroup, we propose that this ought to be true in a more general context:

\begin{question} If $\G$ is an i.c.c., countable, discrete group such satisfying condition $\mathsf{NC}$, is $C_r^*(\G)$ simple?
\end{question}

 In this remark, we will recall by the work of Akemann and Pedersen \cite{AkePed} how $\rm C^*$-simplicity of an i.c.c.\ group is equivalent to the non-existence of certain central sequences in $C_r^*(\G)$. Let $\G$ be an i.c.c., 
 countable, discrete group. Recall, a \emph{central sequence} in $C_r^*(\G)$ is a bounded sequence $(z_n)$ such that $\nor{xz_n - z_n x}_\infty\to 0$ for all $x\in C_r^*(\G)$. A central sequence is said to be trivial if there 
 exists a sequence of scalars $(c_n)$ such that $\nor{z_n - c_n 1}_\infty\to 0$. A central sequence is $(z_n)$ is said to be \emph{summable} if each $z_n\geq 0$ and $\sum_n z_n = z\in L\G$, where the sum is understood to converge in the strong topology.

We remark that even though $L(\bb F_2)$ does not have property $Gamma$, $C_r^*(\bb F_2)$ is rife with non-trivial central sequences by Theorem 2.4 in \cite{AkePed} due to the topology of its spectrum being highly non-Hausdorff. However, $C_r^*(\bb F_2)$ has no non-trivial \emph{summable} central sequences as a consequence of Theorems 3.1 and 3.3 in \cite{AkePed}.

\begin{prop}[Akemann and Pedersen] If $\G$ is an i.c.c., countable, discrete group, then $C_r^*(\G)$ is simple if and only if every summable central sequence is trivial.
\end{prop}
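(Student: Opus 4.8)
The plan is to reduce the proposition to the $\rm C^*$-algebraic results of Akemann and Pedersen \cite{AkePed}, the i.c.c.\ hypothesis serving only to supply two soft von Neumann algebraic facts. Write $A := C^*_r(\G)$ in its standard representation on $\ell^2(\G)$, so that $A'' = L\G$ and $\tau$ is the canonical trace; the i.c.c.\ condition forces $L\G$ to be a $\rm II_1$ factor and $\tau$ to restrict to a faithful normal trace on $A$. The two implications are then handled separately.

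For the forward implication I would argue contrapositively: suppose $A$ is not simple and fix a nonzero proper closed two-sided ideal $I$ of $A$. Since $A$ is separable, Arveson's theorem provides an increasing quasi-central approximate unit $(e_n)_n$ for $I$, with $0 \le e_n \le 1$ and $\|[x, e_n]\| \ra 0$ for all $x \in A$. The strong closure of $I$ in $L\G$ is a weakly closed two-sided ideal of the factor $L\G$, hence equals $L\G$, so $e_n \nearrow 1$ strongly; thus $z_n := e_n - e_{n-1}$ (with $e_0 := 0$) is a central sequence of positive elements with $\sum_n z_n = 1$ in the strong topology, i.e.\ a summable central sequence. It remains to produce a non-trivial one. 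Here I note that any trivial summable central sequence has approximating scalars equal to $\tau(z_n) + o(1)$ and, since $\sum_n \tau(z_n) = \tau(1) = 1 < \infty$ by normality of $\tau$, must satisfy $\|z_n\| \ra 0$: so triviality of $(z_n)$ would mean the increments of the approximate unit decay in norm, and that this can be avoided for a proper ideal is exactly the $\rm C^*$-algebraic content I would import from \cite{AkePed}.

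For the reverse implication, let $(z_n)_n$ be a non-trivial summable central sequence, $z_n \ge 0$, with strong sum $z \in L\G$. The partial sums increase to $z$, so $z_n \ra 0$ strongly and $\sum_n \tau(z_n) = \tau(z) < \infty$, forcing $\tau(z_n) \ra 0$; by the remark above, $\tau(z_n)$ is the only candidate for the approximating scalars, so non-triviality yields, after passing to a subsequence, a $\delta > 0$ with $\|z_n\| \ge 2\delta$ for all $n$. Applying the continuous functional calculus with $f \colon [0, \infty) \ra [0,1]$ that vanishes on $[0,\delta]$, equals $1$ on $[2\delta, \infty)$, and satisfies $f(t) \le \delta^{-1}t$, the elements $w_n := f(z_n)$ still form a summable central sequence (functional calculus preserves asymptotic centrality via uniform polynomial approximation of $f$ with $f(0) = 0$, and $0 \le w_n \le \delta^{-1}z_n$ keeps the partial sums strongly convergent), now with the extra feature $\|w_n\| = 1$ for every $n$. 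Then $J := \{\, x \in A : \|x w_n\| \ra 0 \,\}$ is a closed two-sided ideal of $A$ — two-sidedness on the right using $\|[x, w_n]\| \ra 0$ — and it is proper, since $\|1 \cdot w_n\| = 1$ for all $n$ gives $1 \notin J$. That $J \ne 0$, whence $A$ is not simple, is the remaining $\rm C^*$-algebraic ingredient, obtained by applying \cite{AkePed} to the summable central sequence $(w_n)$.

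The part I expect to be the real obstacle is precisely the two facts I would quote from \cite{AkePed} (Theorems~3.1 and~3.3): that the ideal $J$ attached to a non-trivial summable central sequence is nonzero, and dually that a proper ideal admits a quasi-central approximate unit whose consecutive increments stay bounded away from $0$ in norm. These statements concern the primitive ideal space of $A$ and are not visible purely at the level of $L\G$ — indeed, already in the commutative model $A = C(\mathbb T) = C^*_r(\mathbb Z)$ one sees non-trivial summable central sequences (of partition-of-unity type) precisely because $\mathbb T$ is not a single point. The i.c.c.\ hypothesis contributes only factoriality of $L\G$ (used to force $e_n \nearrow 1$) and the presence of a faithful normal trace (used to pin the approximating scalars to $\tau(z_n)$).
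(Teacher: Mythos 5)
The paper does not actually prove this proposition: it is stated as a recalled result of Akemann and Pedersen, with only the parenthetical remark that Theorems 3.1 and 3.3 of \cite{AkePed} are the relevant ingredients. So your proposal cannot be measured against an argument in the text; what it should be measured against is whether it is a genuine proof or a restatement of the citation. As written it is the latter for the reverse implication: the entire content of ``non-trivial summable central sequence $\Rightarrow$ non-simple'' is the claim that your ideal $J=\{x\in C^*_r(\G) : \nor{xw_n}\to 0\}$ is non-zero, and you explicitly import that from \cite{AkePed}. That is a fair division of labor (it matches how the paper treats the statement), but you should be aware that nothing in the reduction --- the functional calculus normalization, the trace pinning the approximating scalars to $\tau(z_n)$, the two-sidedness of $J$ --- touches the actual difficulty; all of those steps are correct but routine.

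For the forward implication, however, you flagged a gap that is not really there, and you can close it yourself. Let $I$ be a non-zero proper closed two-sided ideal and $(e_n)$ an increasing quasi-central approximate unit. Factoriality of $L\G$ gives $e_n\nearrow 1$ strongly, as you say. Now for each fixed $m$, the operators $e_n-e_m$ are positive and increase strongly to $1-e_m$, so
\[ \sup_n \nor{e_n-e_m} \;=\; \sup_{\nor{\xi}=1}\langle (1-e_m)\xi,\xi\rangle \;=\;\nor{1-e_m}\;\geq\;\nor{1+I}_{A/I}\;=\;1, \]
the last equality because $A/I$ is a non-zero unital $\rm C^*$-algebra. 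Hence one can recursively choose $n_1<n_2<\cdots$ with $\nor{e_{n_{k+1}}-e_{n_k}}\geq 1/2$; the block increments $z_k:=e_{n_{k+1}}-e_{n_k}$ form a positive, summable, quasi-central sequence with norms bounded below and $\tau(z_k)\to 0$, hence non-trivial by your own scalar-pinning observation. No input from \cite{AkePed} is needed in this direction. The reverse direction remains a citation, and you should present it as such rather than as a proof.
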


\noindent It would be highly interesting to investigate whether any $C^*$-algebraic ``rigidity'' techniques can be developed which, similarly to the von Neumann algebraic techniques used above, could be used to rule out the presence of summable (norm) central sequences. Specifically, it would be interesting to know whether ``spectral gap'' type phenomena exist at the $C^*$-level. Take the following concrete situation: $\G$ is a non-amenable countable, discrete group,  $\G\ca X$ is a Bernoulli action, and $B := L^\infty(X)\rtimes_{r} \G$ is the reduced crossed product. Suppose that $x_n$ is a positive, summable sequence in $B$ such that $\nor{a x_n - x_n a}\to 0$ for all $a\in C_r^*(\G)$. Is it the case that there exists a sequence $x_n'\in C_r^*(\G)\subset B$ so that $\nor{x_n - x_n'}\to 0$?

\section*{Acknowledgements} We would like to thank Denis Osin and Jesse Peterson for useful conversations on the subject of this paper.

\bibliographystyle{amsplain}

\end{document}